\documentclass[11pt]{amsproc}
\usepackage[british]{babel}

\usepackage{a4wide}
\usepackage{setspace}
\usepackage{amssymb}
\usepackage{amsmath}
\usepackage{amsthm}
\usepackage{enumitem}
\usepackage{mathrsfs}
\usepackage[colorlinks,citecolor=blue,urlcolor=black,linkcolor=red,linktocpage]{hyperref}


\newtheorem{thm}{Theorem}[section]

\newtheorem{cor}[thm]{Corollary}
\newtheorem{lem}[thm]{Lemma}

\newtheorem{prop}[thm]{Proposition}

\theoremstyle{definition}

\newtheorem{definition}[thm]{Definition}

\newtheorem{remark}[thm]{Remark}


\renewcommand{\epsilon}{\varepsilon}
\renewcommand{\phi}{\varphi}
\newcommand{\defeq}{\mathrel{\mathop:}=}

\DeclareMathOperator{\gr}{gr}

\DeclareMathOperator{\id}{id}
\DeclareMathOperator{\dom}{dom}
\DeclareMathOperator{\spt}{spt}

\DeclareMathOperator{\Aut}{Aut}
\DeclareMathOperator{\Iso}{Iso}

\makeatletter
\def\moverlay{\mathpalette\mov@rlay}
\def\mov@rlay#1#2{\leavevmode\vtop{%
		\baselineskip\z@skip \lineskiplimit-\maxdimen
		\ialign{\hfil$\m@th#1##$\hfil\cr#2\crcr}}}
\newcommand{\charfusion}[3][\mathord]{
	#1{\ifx#1\mathop\vphantom{#2}\fi
		\mathpalette\mov@rlay{#2\cr#3}
	}
	\ifx#1\mathop\expandafter\displaylimits\fi}
\makeatother

\newcommand{\cupdot}{\charfusion[\mathbin]{\cup}{\cdot}}
\newcommand{\bigcupdot}{\charfusion[\mathop]{\bigcup}{\cdot}}

\begin{document}

\onehalfspace

\setlist{noitemsep}

\author{Friedrich Martin Schneider}
\address{F.M.S., Institute of Algebra, TU Dresden, 01062 Dresden, Germany }
\curraddr{Department of Mathematics, University of Auckland, Private Bag 92019, NZ}
\email{martin.schneider@tu-dresden.de}

\author{Andreas Thom}
\address{A.T., Institute of Geometry, TU Dresden, 01062 Dresden, Germany }
\email{andreas.thom@tu-dresden.de}

\title{On F\o lner sets in topological groups}
\date{\today}

\begin{abstract} 
  We extend F\o lner's amenability criterion to the realm of general topological groups. Building on this, we show that a topological group $G$ is amenable if and only if its left translation action can be approximated in a uniform manner by amenable actions on the set~$G$. As applications we obtain a topological version of Whyte's geometric solution to the von Neumann problem and give an affirmative answer to a question posed by Rosendal.
\end{abstract}


\maketitle


\tableofcontents

\section{Introduction}

This paper is a continuation of the study and application of F\o lner-type characterizations of amenable topological groups that we initiated in \cite{SchneiderThom}. The study of amenability of discrete groups benefits from a wealth of possible viewpoints -- ranging from analytic to combinatorial --  that allow for numerous approaches to problems and also many surprising applications. The study of amenable groups was started by von Neumann in his analysis of the Banach-Tarski paradox. Since then, the distinction between amenability, hyperfiniteness, and almost invariance on one side and non-amenability, paradoxicality, and freeness on the other side has been a recurring theme in modern mathematics -- appearing in group theory, functional analysis, ergodic theory, and operator algebras.
In this paper, we will extend the study of this dichotomy in the context of general topological groups, derive various non-trivial consequences and develop a new more combinatorial point of view towards amenable topological groups. The key concept in our study is a suitable notion of F\o lner set. F\o lner's insight \cite{folner} was that the existence of almost invariant finite subsets of a discrete group, a condition that was obviously sufficient for the existence of an invariant mean, is also necessarily satisfied. This clarified the situation and opened the way to various fundamental applications -- let us just mention the development of Ornstein-Weiss entropy theory for group actions of amenable groups. We follow a similar route and identify a topological matching condition that characterizes amenable topological groups.

In this paper we will lay the foundation of a F\o lner-type combinatorial characterization of amenability and clarify its relationship with paradoxicality and freeness -- applications towards an entropy theory of group actions will be subject of forthcoming work. One of our main results is a generalization of a result of Whyte that characterizes finitely generated non-amenable discrete group by the existence of a partition of the Cayley graph into Lipschitz embedded trees, or equivalently, the existence of a semi-regular free subgroup inside the associated wobbling group. This way we can characterize amenability of a topological group $G$ by existence of arbitrarily small perturbations of the left-translation  action by amenable actions on  $G$ as a {\it set}. More precisely, but still somewhat informally, if $G$ is an amenable Polish group, then there exists a sequence of maps $\alpha_n \colon G \to {\rm Sym}(G)$, so that the image of each $\alpha_n$ preserves a mean on the set $G$, and $\alpha_n(g)(h) \to gh$ as $n \to \infty$ for all $g,h \in G$ uniformly. If $G$ is compact, we can approximate the left-translation action of $G$ on itself by finite subgroups of ${\rm Sym}(G)$, i.e., not only does the image of $\alpha_n$ preserve a mean on the set $G$, but it generates a finite subgroup of ${\rm Sym}(G)$.

The focus of the final part of the paper is towards a study of the coarse geometry of amenable topological groups -- as recently initiated by Rosendal \cite{rosendal2}. Our theory provides a natural setup in which some classical constructions, that rely on the existence of F\o lner sets in
the realm of locally compact groups, can be carried out without further problems. Using F\o lner sets, we can perform a certain ultra-product argument to see, that any amenable topological group that embeds coarsely into some Banach space $E$, embeds also coarsely and equivariantly into a Banach space $V$ that is finitely representable in $L^p(E)$.

\hspace{0.1cm}

This paper is organized as follows. In Section~\ref{section:uniform.spaces} we recollect some facts about uniform spaces and UEB topologies. In Section~\ref{section:day} we prove an amenability criterion for topological groups by means of almost invariant vectors (Theorem~\ref{theorem:topological.day}), and in Section~\ref{section:folner} we give a corresponding characterization in terms of topological F\o lner sets (Theorem~\ref{theorem:topological.folner}). Utilizing these results, in Section~\ref{section:whyte} we prove the above-mentioned approximation result concerning perturbed translations on amenable topological groups (Theorem~\ref{theorem:approximating.actions}) and deduce a topological version of Whyte's geometric solution to the von Neumann problem (Corollary~\ref{corollary:von.neumann}). Our final Section~\ref{section:equivariant.geometry} is devoted to an application of our results to coarse geometry of topological groups: in fact, we give an affirmative answer to a question posed by Rosendal (Theorem~\ref{theorem:rosendal}).

\section{Uniform spaces and the UEB topology}\label{section:uniform.spaces}

In this section we briefly review some preliminaries concerning uniform spaces and the UEB topology. For further reading on this subject we refer to~\cite{PachlBook}.

For the sake of convenience, we recall some basic definitions and facts concerning uniform spaces. A \emph{uniformity} on a set $X$ is a filter $\mathscr{E}$ on the set $X \times X$ such that \begin{enumerate}
	\item[$\bullet$] $\{ (x,x) \mid x \in X \} \subseteq E$ for every $E \in \mathscr{E}$,
	\item[$\bullet$] $E^{-1} \in \mathscr{E}$ for every $E \in \mathscr{E}$,
	\item[$\bullet$] for every $E_{0} \in \mathscr{E}$ there exists $E_{1} \in \mathscr{E}$ such that $E_{1} \circ E_{1} \subseteq E_{0}$.
\end{enumerate} A \emph{uniform space} is a set $X$ endowed with a uniformity on $X$, whose elements are called the \emph{entourages} of the uniform space $X$. For a uniform space $X$, the \emph{induced topology} on $X$ is defined as follows: a subset $S \subseteq X$ is \emph{open} in $X$ if, for every $x \in S$, there is an entourage $E$ of $X$ such that $\{ y \in X \mid (x,y) \in E \} \subseteq S$. Let $X$ and $Y$ be uniform spaces. A map $f \colon X \to Y$ is called \emph{uniformly continuous} if for every entourage $E$ of~$Y$ there exists some entourage $F$ of~$X$ such that $(f \times f)(F) \subseteq E$. A bijection $f \colon X \to Y$ is called an \emph{isomorphism} if both $f$ and $f^{-1}$ are uniformly continuous maps. Moreover, a set $H \subseteq Y^{X}$ is called \emph{uniformly equicontinuous} if for every entourage $E$ of $Y$ there exists some entourage $F$ of~$X$ such that $(f \times f)(F) \subseteq E$ whenever $f \in H$. If $I$ is a set, then the \emph{uniformity of uniform convergence} on $X^{I}$ is defined to be the least uniformity on $X^{I}$ containing all the sets of the form \begin{displaymath}
	\left. \left\{ (f,g) \in X^{I} \times X^{I} \, \right\vert \forall i \in I \colon \, (f(i),g(i)) \in E \right\} \qquad (E \text{ entourage of } X) ,
\end{displaymath} and the corresponding \emph{topology of uniform convergence} on $X^{I}$ is defined to be the topology induced by that uniformity. Evidently, there is a natural isomorphism between the resulting uniform spaces $X^{I \times J}$ and $(X^{I})^{J}$ for any two sets $I$ and $J$.

Given a pseudo-metric $d$ on a set $X$, we define \begin{displaymath}
	B_{d}(x,\epsilon) \defeq \{ y \in X \mid d(x,y) < \epsilon \} , \qquad B_{d}[x,\epsilon] \defeq \{ y \in X \mid d(x,y) \leq \epsilon \} 
\end{displaymath} for $x \in X$ and $\epsilon > 0$. A pseudo-metric $d$ on a topological space $X$ is said to be \emph{continuous} if $B_{d}(x,\epsilon)$ is open in $X$ for all $x \in X$ and $\epsilon > 0$, that is, the topology generated by $d$ is contained in the topology of~$X$. Furthermore, a pseudo-metric $d$ on a uniform space $X$ is called \emph{uniformly continuous} if the uniformity generated by $d$, i.e., \begin{displaymath}
	\mathscr{E}_{d} \defeq \{ E \subseteq X \times X \mid \exists \epsilon > 0 \, \forall x,y \in X \colon \, d(x,y) < \epsilon \Longrightarrow (x,y) \in E \} ,
\end{displaymath} is contained in the uniformity of~$X$. It was shown by Weil~\cite{weil} that every uniformity on a set $X$ is the union of the directed family of uniformities generated by the corresponding uniformly continuous pseudo-metrics on $X$.

We continue with some remarks concerning the UEB topology (cf.~\cite{PachlBook,NeufangPachlPekka,PachlSteprans}). As usual, given a set $X$, we will denote by $\ell^{\infty}(X)$ the Banach space of all bounded real-valued functions on $X$ equipped with the supremum norm \begin{displaymath}
	\Vert f \Vert_{\infty} \defeq \sup \{ \vert f(x) \vert \mid x \in X \} \qquad (f \in \ell^{\infty}(X)) .
\end{displaymath} Let $X$ be a uniform space. The set $\mathrm{UC}_{b}(X)$ of all bounded uniformly continuous real-valued functions on $X$ is a closed linear subspace of $\ell^{\infty}(X)$ and thus constitutes a Banach space itself when endowed with the supremum norm. A subset $H \subseteq \mathrm{UC}_{b}(X)$ is called \emph{UEB} (short for \emph{uniformly equicontinuous bounded}) if $H$ is uniformly equicontinuous and $\Vert \cdot \Vert_{\infty}$-bounded. It is not difficult to see that a subset $H \subseteq \mathrm{UC}_{b}(X)$ is UEB if and only if $H$ is $\Vert \cdot \Vert_{\infty}$-bounded and there exists $\ell \geq 0$ along with a uniformly continuous pseudo-metric $d$ on $X$ such that every member of $H$ is $\ell$-Lipschitz continuous for $d$. The \emph{UEB topology} on the continuous dual~$\mathrm{UC}_{b}(X)'$ is defined as the topology of uniform convergence on UEB subsets of $\mathrm{UC}_{b}(X)$. This is a locally convex linear topology on the vector space $\mathrm{UC}_{b}(X)'$, which by our remark above coincides with the topology defined by the semi-norms \begin{displaymath}
	p_{d}(\mu) \defeq \sup \{ \vert \mu(f) \vert \mid f \colon (X,d) \to [-1,1] \text{ $1$-Lipschitz} \} \qquad (\mu \in \mathrm{UC}_{b}(X)') 
\end{displaymath} where $d$ runs trough all uniformly continuous pseudo-metrics on~$X$. Since the set of uniformly continuous pseudo-metrics on $X$ is upwards directed with respect to point-wise ordering and we have $p_{d} \leq p_{d'}$ for any two uniformly continuous pseudo-metrics $d$ and $d'$ on $X$ with $d \leq d'$, it follows that \begin{displaymath}
	\{ B_{p_{d}}(\mu,\epsilon) \mid d \text{ uniformly continuous pseudo-metric on } X, \, \epsilon > 0 \}
\end{displaymath} constitutes a neighborhood basis at any $\mu \in \mathrm{UC}_{b}(X)'$ with regard to the UEB topology.

Before we continue, let us clarify some notation. Let $X$ be a set. We denote by $\mathscr{F}(X)$ the set of all finite subsets of $X$. For a function $f \colon X \to \mathbb{R}$, we put $\spt (f) \defeq \{ x \in X \mid f(x) \ne 0\}$. Consider the real vector space $\mathbb{R}X \defeq \{ f \in \mathbb{R}^{X} \mid \spt (f) \text{ finite} \}$. Moreover, let \begin{displaymath}
	\Vert f \Vert_{1} \defeq \sum_{x \in X} \vert f(x) \vert \qquad (f \in \mathbb{R}X) .
\end{displaymath} For $x \in X$, define $\delta_{x} \in \mathbb{R}X$ by setting $\delta_{x}(x) \defeq 1$ and $\delta_{x}(y) \defeq 0$ for every $y \in X$ with $y \ne x$. Let $\delta_{F} \defeq \frac{1}{\vert F \vert} \sum_{x \in F} \delta_{x}$ for any finite non-empty $F \subseteq X$. For $a \in \mathbb{R}X$ and $f \in \mathbb{R}^{X}$, let \begin{displaymath}
	a(f) \defeq \sum_{x \in X} a(x)f(x) .
\end{displaymath}

Now let $X$ be a uniform space again. Then the line above defines a linear map from $\mathbb{R}X$ into the continuous dual $\mathrm{UC}_{b}(X)'$, which is injective if and only if $X$ is Hausdorff. In any case, it allows us to pull back the UEB topology and the semi-norms defined above onto $\mathbb{R}X$. In terms of notation, we will not distinguish between the semi-norms on $\mathrm{UC}_{b}(X)'$ and the corresponding ones on $\mathbb{R}X$. As the following proposition reveals, the continuous dual of the locally convex topological vector space $\mathbb{R}X$ may be identified with $\mathrm{UC}_{b}(X)$.

\begin{prop}[cf.~Lemma~6.5 and Theorem~6.6 in~\cite{PachlBook}]\label{proposition:duality} If $X$ is a uniform space, then the map $\Phi \colon \mathrm{UC}_{b}(X) \to \mathbb{R}X'$ given by \begin{displaymath}
	\Phi (f)(a) \defeq a(f) \qquad (f \in \mathrm{UC}_{b}(X), \, a \in \mathbb{R}X)
\end{displaymath} is an isomorphism of real vector spaces. \end{prop}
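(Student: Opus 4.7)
The plan is to prove that $\Phi$ is linear, injective, and surjective. Linearity is immediate from the definition. For injectivity, if $\Phi(f) = 0$ then in particular $f(x) = \delta_x(f) = \Phi(f)(\delta_x) = 0$ for every $x \in X$, so $f \equiv 0$.

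The substance lies in surjectivity. Given a continuous linear functional $\ell \in \mathbb{R}X'$, the natural candidate is $f \colon X \to \mathbb{R}$, $f(x) \defeq \ell(\delta_x)$. Since every $a \in \mathbb{R}X$ decomposes as $a = \sum_{x \in \spt(a)} a(x) \delta_x$, linearity of $\ell$ yields
\begin{displaymath}
    \ell(a) \, = \, \sum_{x \in X} a(x) \ell(\delta_x) \, = \, \sum_{x \in X} a(x) f(x) \, = \, a(f) \, = \, \Phi(f)(a),
\end{displaymath}
so it remains to verify that $f \in \mathrm{UC}_{b}(X)$.

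For this, I would exploit the continuity of $\ell$ at the origin. Because the sets $B_{p_{d}}(0,\epsilon)$ form a neighborhood basis at $0$ and the family of uniformly continuous pseudo-metrics on $X$ is upwards directed, there exist a uniformly continuous pseudo-metric $d$ on $X$ and $\epsilon > 0$ such that $|\ell(a)| \leq 1$ whenever $p_{d}(a) < \epsilon$. By the usual homogeneity argument this upgrades to an estimate $|\ell(a)| \leq C \, p_{d}(a)$ for all $a \in \mathbb{R}X$, with $C \defeq 1/\epsilon$. Now observe the two key estimates on the point masses: for $x,y \in X$, every $1$-Lipschitz function $g \colon (X,d) \to [-1,1]$ satisfies $|g(x)| \leq 1$ and $|g(x) - g(y)| \leq d(x,y)$, whence
\begin{displaymath}
    p_{d}(\delta_{x}) \, \leq \, 1 \qquad \text{and} \qquad p_{d}(\delta_{x} - \delta_{y}) \, \leq \, d(x,y) .
\end{displaymath}
Applying $\ell$ gives $|f(x)| \leq C$ and $|f(x) - f(y)| \leq C \, d(x,y)$, so that $f$ is bounded and $C$-Lipschitz with respect to $d$. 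Since $d$ is uniformly continuous on $X$, this shows $f \in \mathrm{UC}_{b}(X)$, completing the proof.

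The only subtle point I anticipate is the passage from the topological continuity of $\ell$ to the single-semi-norm bound $|\ell(a)| \leq C \, p_d(a)$; this is where the upward-directedness of the pseudo-metrics (already recorded in the preceding discussion) is essential, since otherwise one would a priori obtain only a finite intersection of balls. Everything else is a routine verification using the decomposition $a = \sum a(x)\delta_{x}$ and the $1$-Lipschitz test functions for $p_{d}$.
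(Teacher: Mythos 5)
Your treatment of linearity, injectivity, and surjectivity is correct, and your surjectivity argument is essentially the paper's: both define $f(x) \defeq \ell(\delta_{x})$ and exploit the fact that the balls $B_{p_{d}}(0,\epsilon)$ form a neighbourhood basis at $0$ for the UEB topology. Your version is marginally cleaner in that a single basic neighbourhood yields the uniform estimate $\vert \ell(a) \vert \leq C\, p_{d}(a)$, which delivers boundedness and Lipschitz continuity of $f$ in one stroke, whereas the paper verifies boundedness and uniform continuity separately (allowing a different pseudo-metric for each $\epsilon$). One half-sentence worth adding to the homogeneity step: when $p_{d}(a) = 0$ you still need $\ell(a) = 0$, which follows since $p_{d}(ta) = 0 < \epsilon$ for every $t > 0$; with that, the bound $\vert \ell(a) \vert \leq C\, p_{d}(a)$ holds for all $a \in \mathbb{R}X$.

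There is, however, a genuine omission: you never verify that $\Phi$ actually takes values in $\mathbb{R}X'$, i.e.\ that $\Phi(f)$ is a \emph{continuous} linear functional on $\mathbb{R}X$ with respect to the UEB topology. This is part of the assertion --- the codomain is the continuous dual, not the algebraic dual --- and it is not automatic; the paper devotes its opening paragraph to exactly this point. The verification is the mirror image of your surjectivity argument: assuming without loss of generality $\Vert f \Vert_{\infty} \leq 1$, the formula $d(x,y) \defeq \vert f(x) - f(y) \vert$ defines a uniformly continuous pseudo-metric on $X$ for which $f \colon (X,d) \to [-1,1]$ is $1$-Lipschitz, whence $\vert \Phi(f)(a) \vert = \vert a(f) \vert \leq p_{d}(a)$ for all $a \in \mathbb{R}X$, and continuity follows. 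With this paragraph added, your proof is complete and coincides in substance with the paper's.
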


\begin{proof} We first check that $\Phi$ is well defined. To this end, let $f \in \mathrm{UC}_{b}(X)$. A standard computation shows that $\Phi (f) \colon \mathbb{R}X \to \mathbb{R}$ is linear. We need to show that $\Phi (f)$ is continuous. Without loss of generality, we may assume that $\Vert f \Vert_{\infty} \leq 1$. As $f$ is uniformly continuous, we obtain a uniformly continuous pseudo-metric $d$ on $X$ by setting \begin{displaymath}
	d(x,y) \defeq \vert f(x) - f(y) \vert \qquad (x,y \in X) .
\end{displaymath} Let $\epsilon > 0$. Since $f \colon (X,d) \to [-1,1]$ is $1$-Lipschitz continuous, we have $\vert \Phi (f)(a) \vert = \vert a(f) \vert \leq \epsilon$ for every $a \in B_{p_{d}}[0,\epsilon]$. Hence, $\Phi (f)(B_{p_{d}}[0,\epsilon]) \subseteq [-\epsilon,\epsilon]$. So, $\Phi$ is indeed well defined.

Moreover, $\Phi$ is obviously a linear map. Clearly, $\Phi$ is also injective: if $f \in \mathrm{UC}_{b}(X)$ and $f \ne 0$, then there exists some $x \in X$ with $f(x) \ne 0$, and we conclude that $\Phi (f)(\delta_{x}) = f(x) \ne 0$ and hence $\Phi (f) \ne 0$. It remains to show that $\Phi$ is surjective. For this purpose, consider a continuous linear functional $F \colon \mathbb{R}X \to \mathbb{R}$. Define $f \colon X \to \mathbb{R}, \, x \mapsto F(\delta_{x})$. Now, $f$ is bounded for the following reason: as $F$ is continuous, there is a uniformly continuous pseudo-metric $d$ on $X$ such that $F(B_{p_{d}}[0,1])$ is bounded, and since $\{ \delta_{x} \mid x \in X \} \subseteq B_{p_{d}}[0,1]$, it follows that \begin{displaymath}
	\sup \{ \vert f(x) \vert \mid x \in X \} = \sup \{ \vert F(\delta_{x}) \vert \mid x \in X \} < \infty .
\end{displaymath} To see that $f$ is uniformly continuous, let $\epsilon > 0$. By continuity of $F$, there exists a uniformly continuous pseudo-metric $d$ on $X$ along with some $\delta > 0$ such that $F(B_{p_{d}}[0,\delta]) \subseteq [-\epsilon,\epsilon]$. If $x,y \in X$ with $d(x,y) \leq \delta$, then $p_{d}(\delta_{x} - \delta_{y}) \leq \delta$ and thus \begin{displaymath}
	\vert f(x) - f(y) \vert = \vert F(\delta_{x}) - F(\delta_{y}) \vert = \vert F(\delta_{x} - \delta_{y}) \vert \leq \epsilon .
\end{displaymath} This shows that $f \colon X \to \mathbb{R}$ is uniformly continuous. Finally, we observe that \begin{displaymath}
	\Phi (f)(a) = \sum_{x \in X} a(x)f(x) = \sum_{x \in X} a(x)F(\delta_{x}) = F\left( \sum_{x \in X} a(x)\delta_{x} \right) = F(a)
\end{displaymath} for all $a \in \mathbb{R}X$. So, $\Phi$ is surjective. This completes the proof. \end{proof}

The following lemma is frequently useful to pass from positive and normalized elements in $\mathbb{R}X$ to finite subsets of a space $X$.

\begin{lem}\label{lemma:density} If $X$ is a perfect Hausdorff uniform space, then $\{ \delta_{F} \mid F \in \mathscr{F}(X), \, F \ne \emptyset \}$ is dense in $\{ a \in \mathbb{R}X \mid \Vert a \Vert_{1} = 1, \, a \geq 0 \}$ with respect to the UEB topology. \end{lem}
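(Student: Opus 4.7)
The plan is to approximate an arbitrary element $a=\sum_{i=1}^{k}\alpha_i\delta_{x_i}$ of the target set (with pairwise distinct $x_i\in X$, $\alpha_i>0$, and $\sum_i\alpha_i=1$) in two stages: first rationally, by replacing each $\alpha_i$ by a suitable $n_i/N$, and then geometrically, by spreading the weight $n_i/N$ uniformly over $n_i$ distinct points clustered $d$-closely around $x_i$. By the discussion preceding Proposition~\ref{proposition:duality}, a neighborhood basis of $a$ in the UEB topology on $\mathbb{R}X$ is given by the balls $B_{p_d}(a,\epsilon)$ where $d$ ranges over uniformly continuous pseudo-metrics on $X$, so it suffices, given any such $d$ and any $\epsilon>0$, to construct a non-empty $F\in\mathscr{F}(X)$ with $p_d(a-\delta_F)\leq\epsilon$.

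For the rational step, I would choose a large integer $N$ and positive integers $n_1,\dots,n_k$ with $\sum_i n_i=N$ and $\sum_i|\alpha_i-n_i/N|\leq\epsilon/2$; this is possible since each $\alpha_i>0$. For the clustering step, Hausdorffness supplies pairwise disjoint open neighborhoods $U_1,\dots,U_k$ of $x_1,\dots,x_k$, and because $d$ is uniformly continuous, the set $V_i\defeq B_d(x_i,\epsilon/2)\cap U_i$ is an open neighborhood of $x_i$ in $X$. Here the perfectness hypothesis enters: in a perfect Hausdorff space every non-empty open set is infinite, since a finite non-empty open set $\{z_1,\dots,z_m\}$ could, by Hausdorff separation, be intersected with disjoint neighborhoods of the $z_\ell$ to exhibit an isolated point. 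Consequently I can select $n_i$ distinct elements $y_{i,1},\dots,y_{i,n_i}\in V_i$, and then set $F\defeq\{y_{i,j}\mid 1\leq i\leq k,\,1\leq j\leq n_i\}$. Disjointness of the $U_i$ ensures that the $y_{i,j}$ are pairwise distinct, so $|F|=N$.

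The final estimate is a direct calculation. For any $1$-Lipschitz function $f\colon(X,d)\to[-1,1]$, rearranging gives
\[
(a-\delta_F)(f)=\sum_i\left(\alpha_i-\tfrac{n_i}{N}\right)f(x_i)+\frac{1}{N}\sum_{i=1}^{k}\sum_{j=1}^{n_i}\bigl(f(x_i)-f(y_{i,j})\bigr),
\]
whose absolute value is bounded by $\sum_i|\alpha_i-n_i/N|+\max_{i,j}d(x_i,y_{i,j})\leq\epsilon/2+\epsilon/2=\epsilon$; taking the supremum over such $f$ yields $p_d(a-\delta_F)\leq\epsilon$. The only step that genuinely uses the hypotheses on $X$ is the observation that, in a perfect Hausdorff uniform space, every non-empty open set contains arbitrarily many distinct points — this is the one obstacle of substance, and both perfectness and Hausdorffness are needed for it. Everything else is routine bookkeeping of a kind familiar from density results for empirical measures.
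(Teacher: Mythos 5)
Your proposal is correct and follows essentially the same two-stage argument as the paper: rational approximation of the coefficients followed by spreading each rational weight over a cluster of nearby points, using perfectness and Hausdorffness to guarantee that every non-empty open set is infinite, and concluding with the same Lipschitz estimate. The only cosmetic difference is that you secure disjointness of the clusters via Hausdorff separating neighborhoods, whereas the paper simply selects the finite sets $\phi(x)$ to be pairwise disjoint (which is possible greedily since each ball is infinite); this changes nothing of substance.
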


\begin{proof} Let $a \in \mathbb{R}X$ with $\Vert a \Vert_{1} = 1$ and $a \geq 0$. Put $S \defeq \spt (a)$. Furthermore, let $\epsilon > 0$ and let $d$ be a uniformly continuous pseudo-metric on $X$. Since $\mathbb{Q}$ is dense in $\mathbb{R}$, there exists $b \colon X \to \mathbb{Q}$ such that $\spt (b) = S$, $b \geq 0$, $\Vert b \Vert_{1} = 1$, and $\Vert a-b \Vert_{1} \leq \frac{\epsilon}{2}$. Now, let $c \colon X \to \mathbb{N}$ and $n \geq 1$ such that $b(x) = \frac{c(x)}{n}$ for all $x \in X$. Since $X$ is a perfect Hausdorff space, every open non-empty subset of $X$ is infinite. For any $x \in X$, the continuity of $d$ implies that $B_{d}(x,\frac{\epsilon}{2})$ is a neighborhood of $x$ in $X$ and thus infinite. Hence, there is a map $\phi \colon S \to \mathscr{F}(X)\setminus \{ \emptyset \}$ with \begin{enumerate}
	\item[$(1)$]	$\phi(x) \subseteq B_{d}(x,\frac{\epsilon}{2})$ for every $x \in S$,
	\item[$(2)$]	$|\phi(x)| = c(x)$ for every $x \in S$,
	\item[$(3)$]	$\phi(x) \cap \phi(y) = \emptyset$ for any two distinct $x,y \in S$.
\end{enumerate} By~(2) and~(3), we have $|F| = n$ for $F \defeq \bigcup \{ \phi(x) \mid x \in S \}$. Moreover, if $f \colon (X,d) \to [-1,1]$ is $1$-Lipschitz continuous, then~(1) implies that \begin{align*}
	\vert (b - \delta_{F})(f) \vert \, &= \, \frac{1}{n} \left| \sum_{x \in S} c(x)f(x) - \sum_{x \in S}\sum_{y \in \phi(x)} f(y) \right| \, \leq \, \frac{1}{n} \sum_{x \in S} \left| c(x) f(x) - \sum_{y \in \phi (x)} f(y) \right| \\
	&\leq \, \frac{1}{n} \sum_{x \in S} \sum_{y \in \phi (x)} |f(x) - f(y)| \, \leq \, \frac{1}{n} \sum_{x \in S} \sum_{y \in \phi (x)} d(x,y) \, \leq \, \frac{\epsilon}{2n} \sum_{x \in S} c(x) \, = \, \frac{\epsilon}{2} ,
\end{align*} and therefore \begin{align*}
	\vert (a - \delta_{F})(f) \vert &\leq \vert (a-b)(f) \vert + \vert (b-\delta_{F})(f) \vert \leq \Vert a-b \Vert_{1} + \vert (b-\delta_{F})(f) \vert \leq \epsilon .
\end{align*} Hence, $\delta_{F} \in B_{p_{d}}[a,\epsilon ]$. This finishes the proof. \end{proof}

\section{Amenability and almost invariant vectors}\label{section:day}

Viewing a topological group $G$ as a uniform space and endowing the convolution algebra $\mathbb{R}G$ with the corresponding UEB topology, one might wonder whether amenability of $G$ can be characterized in terms of the existence of almost invariant vectors in $\mathbb{R}G$. In this section we establish such a criterion for amenability of topological groups (Theorem~\ref{theorem:topological.day}). 

For convenience, let us once again fix some notation. Let $G$ be any group. For $g \in G$, let $\lambda_{g} \colon G \to G, \, x \mapsto gx$ and $\rho_{g} \colon G \to G, \, x \mapsto xg^{-1}$. As usual, the convolution on $\mathbb{R}G$ is given by \begin{displaymath}
	ab \defeq \sum_{g,h \in G} a(g)b(h)\delta_{gh} \qquad (a,b \in \mathbb{R}G) .
\end{displaymath} We note that $\spt (ab) \subseteq (\spt a)(\spt b)$ for any two $a,b \in \mathbb{R}G$. Of course, $G \to \mathbb{R}G, \, g \mapsto \delta_{g}$ constitutes an embedding of $G$ into the multiplicative semigroup of the unital ring $\mathbb{R}G$. In particular, we have left and right actions of $G$ by linear transformations on $\mathbb{R}G$ given by \begin{displaymath}
	ga \defeq \delta_{g}a , \qquad ag \defeq a\delta_{g} \qquad (g \in G, \, a \in \mathbb{R}G) .
\end{displaymath} For $a \in \mathbb{R}G$ and $f \in \mathbb{R}^{G}$, let us also define $R_{a}f \in \mathbb{R}^{G}$ by \begin{displaymath}
	(R_{a}f)(x) \defeq a(f \circ \lambda_{x}) \qquad (x \in G) ,
\end{displaymath} that is, $R_{a}f = \sum_{g \in G} a(g^{-1}) (f \circ \rho_{g})$. Clearly, $R_{a}f \in \mathbb{R}G$ if $a,f \in \mathbb{R}G$. Furthermore, simple computations reveal that $R_{ab}f = R_{a}(R_{b}f)$ and $(ab)(f) = a(R_{b}f)$ for all $f \in \mathbb{R}^{G}$ and $a,b \in \mathbb{R}G$.

Throughout the present paper, unless explicitly stated otherwise, whenever a topological group $G$ is considered as a uniform space, we will be referring to its \emph{right uniformity}, i.e., \begin{displaymath}
	\mathscr{E}_{r}(G) \defeq \{ E \subseteq G \times G \mid \exists U \in \mathscr{U}_{e}(G) \, \forall x,y \in G \colon \, xy^{-1} \in U \Longrightarrow (x,y) \in E \} ,
\end{displaymath} where $\mathscr{U}_{e}(G)$ denotes the neighborhood filter of the neutral element in $G$. However, to avoid any possible confusion, we will denote the set of all bounded right-uniformly continuous real-valued functions on $G$ by $\mathrm{RUC}_{b}(G)$. Of course, the topology induced by the right uniformity of a topological group coincides with its original topology.

As usual, we will call a pseudo-metric $d$ on a group $G$ \emph{right-invariant} (\emph{left-invariant}, resp.) if $d(xg,yg) = d(x,y)$ ($d(gx,gy) = d(x,y)$, resp.) for all $g,x,y \in G$. Note that a right-invariant pseudo-metric on a topological group is continuous if and only if it is uniformly continuous with respect to the right uniformity. Moreover, if $G$ is a topological group and $H$ is a UEB subset of $\mathrm{RUC}_{b}(G)$, then \begin{displaymath}
	d(x,y) \defeq \sup_{g \in G} \sup_{f \in H} \vert f(xg) - f(yg) \vert \qquad (x,y \in G)
\end{displaymath} is a right-invariant continuous pseudo-metric on~$G$, and clearly $f \colon (G,d) \to \mathbb{R}$ is $1$-Lipschitz continuous for every $f \in H$. Since the set of right-invariant continuous pseudo-metrics on~$G$ is upwards directed with respect to point-wise ordering, it follows that \begin{displaymath}
	\{ B_{p_{d}}(a,\epsilon) \mid d \text{ right-invariant continuous pseudo-metric on } G, \, \epsilon > 0 \}
\end{displaymath} constitutes a neighborhood basis at any $a \in \mathbb{R}G$ with regard to the UEB topology. According to a recent result by Pachl and Stepr{\= a}ns~\cite{PachlSteprans}, the convolution on $\mathbb{R}G$ is jointly continuous with respect to the UEB topology if and only if $G$ is a \emph{SIN group}, i.e., its neutral element admits a neighborhood basis consisting of sets invariant under conjugation. However, convolution is always separately continuous: for every $a \in \mathbb{R}G$, the linear maps $\mathbb{R}G \to \mathbb{R}G, \, b \mapsto ab$ and $\mathbb{R}G \to \mathbb{R}G, \, b \mapsto ba$ are continuous with regard to the UEB topology. Moreover, if $H$ is any $\Vert \cdot \Vert_{1}$-bounded subset of the positive cone of $\mathbb{R}G$, then $\{ b \mapsto ba \mid a \in A \}$ is even uniformly equicontinuous. This is due to the following elementary fact.

\begin{lem}\label{lemma:contractive.functions} Let $d$ be a right-invariant pseudo-metric on a group $G$ and let $f \colon (G,d) \to \mathbb{R}$ be $1$-Lipschitz continuous. Then $R_{a}f \colon (G,d) \to \mathbb{R}$ is $1$-Lipschitz continuous for every $a \in \mathbb{R}G$ with $\Vert a \Vert_{1} = 1$ and $a \geq 0$. \end{lem}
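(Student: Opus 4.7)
The plan is to unwind the definition of $R_a f$ and then chain together four standard facts: positivity of $a$, the $1$-Lipschitz property of $f$, right-invariance of $d$, and $\Vert a \Vert_1 = 1$. The obstacle, if any, is bookkeeping rather than mathematics; the whole statement amounts to a one-line estimate once $R_a f$ is written out as a convex combination of right translates of $f$.

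Concretely, first I would expand the defining formula. By definition, $(R_a f)(x) = a(f \circ \lambda_x) = \sum_{g \in G} a(g) f(xg)$ for every $x \in G$, since $a \in \mathbb{R}G$ has finite support. Because the sums involved are finite, ordinary algebra applies without any analytic subtleties.

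Next, for arbitrary $x,y \in G$, I would estimate
\begin{align*}
 |(R_a f)(x) - (R_a f)(y)|
 &= \biggl| \sum_{g \in G} a(g)\bigl(f(xg) - f(yg)\bigr)\biggr|
  \leq \sum_{g \in G} a(g)\,|f(xg) - f(yg)| \\
 &\leq \sum_{g \in G} a(g)\, d(xg, yg)
  = \sum_{g \in G} a(g)\, d(x,y) = \Vert a \Vert_1 \, d(x,y) = d(x,y),
\end{align*}
where the first inequality uses $a \geq 0$, the second uses that $f$ is $1$-Lipschitz with respect to $d$, the subsequent equality uses the right-invariance $d(xg,yg)=d(x,y)$, and the last step uses $\Vert a \Vert_1 = 1$. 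This proves the claim; no auxiliary lemma is needed.

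The step most likely to need justification in the paper's flow is the identification $(R_a f)(x) = \sum_g a(g) f(xg)$, which is essentially the alternative description already recorded above as $R_a f = \sum_{g \in G} a(g^{-1})(f \circ \rho_g)$; after the change of variables $g \mapsto g^{-1}$ this becomes the form used in the estimate. Everything else is a transparent combination of the hypotheses, so no genuine obstruction appears.
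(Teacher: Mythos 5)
Your proof is correct and coincides with the paper's own argument: expand $(R_a f)(x)=\sum_g a(g)f(xg)$, then apply positivity of $a$, the $1$-Lipschitz bound, right-invariance of $d$, and $\Vert a\Vert_1=1$ in sequence. No issues.
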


\begin{proof} Let $a \in \mathbb{R}G$ with $\Vert a \Vert_{1} = 1$ and $a \geq 0$. For all $x,y \in G$, \begin{displaymath}
	\vert R_{a}f(x) - R_{a}f(y) \vert \, \leq \, \sum_{g \in G} a(g) \vert f(xg) - f(yg) \vert \, \leq \, \sum_{g \in G} a(g) d(xg,yg) \, = \, d(x,y) .\qedhere
\end{displaymath} \end{proof}

Now let us turn our attention towards amenability. Recall that a topological group $G$ is \emph{amenable} if $\mathrm{RUC}_{b}(G)$ admits a left-invariant mean, i.e., there exists a positive linear map $\mu \colon \mathrm{RUC}_{b}(G) \to \mathbb{R}$ such that $\mu (\mathbf{1}) = 1$ and $\mu (f \circ \lambda_{g}) = \mu (f)$ for all $f \in \mathrm{RUC}_{b}(G)$ and~$g \in G$. According to a well-known result of Rickert~\cite[Theorem~4.2]{rickert}, a topological group $G$ is amenable if and only if every continuous action of $G$ by affine homeomorphisms on a non-void compact convex subset of a locally convex topological vector space admits a fixed point. Strengthening this condition, a topological group $G$ is said to be \emph{extremely amenable} if every continuous action of $G$ on a non-empty compact Hausdorff space has a fixed point.

Our first main result relates amenability of a topological group $G$ to the existence of almost invariant vectors in the locally convex space $\mathbb{R}G$ carrying the UEB topology. It generalizes a result by Day~\cite{day57} for discrete groups (see also~\cite{namioka}).

\begin{thm}\label{theorem:topological.day} A topological group $G$ is amenable if and only if, for every $\epsilon > 0$, every finite subset $E \subseteq G$, and every right-invariant continuous pseudo-metric $d$ on $G$, there exists $a \in \mathbb{R}G$ with $\Vert a \Vert_{1} = 1$ and $a \geq 0$ such that \begin{displaymath}
	\forall g \in E \colon \quad p_{d}(a - ga) \leq \epsilon .
\end{displaymath} \end{thm}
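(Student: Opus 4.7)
The plan is to prove a topological version of Day's equivalence, splitting into the easy ``$\Leftarrow$'' direction (extracting an invariant mean as a cluster point) and the harder ``$\Rightarrow$'' direction (a Day-style Hahn--Banach separation argument that crucially uses the duality $\mathbb{R}G' \cong \mathrm{RUC}_{b}(G)$ established in Proposition~\ref{proposition:duality}).

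For the sufficient direction, I would direct the set of triples $(\epsilon,E,d)$ by inclusion/refinement and pick a net $(a_{\alpha})_{\alpha}$ in $\mathbb{R}G$ witnessing the hypothesis. Each $a_{\alpha}$ is a mean on $\mathrm{RUC}_{b}(G)$ via the embedding of Proposition~\ref{proposition:duality}, so by Banach--Alaoglu the net has a weak-$\ast$ cluster point $\mu$ which is positive and satisfies $\mu(\mathbf{1})=1$. To verify left-invariance, I would use the formula $(ga)(f)=a(f\circ\lambda_{g})$ (which follows from $(ab)(f)=a(R_{b}f)$ by setting $a=\delta_{g}$), so that $(a-ga)(f)=a(f-f\circ\lambda_{g})$. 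Given $f\in\mathrm{RUC}_{b}(G)$ with $\|f\|_{\infty}\le 1$, the formula $d_{f}(x,y)\defeq\sup_{h\in G}|f(xh)-f(yh)|$ produces a right-invariant continuous pseudo-metric on $G$ with respect to which $f$ is $1$-Lipschitz. Hence $|(a_{\alpha}-ga_{\alpha})(f)|\le p_{d_{f}}(a_{\alpha}-ga_{\alpha})\to 0$, so $\mu(f-f\circ\lambda_{g})=0$ for every $g\in G$.

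For the necessary direction, fix $\epsilon>0$, $E\in\mathscr{F}(G)$, and a right-invariant continuous pseudo-metric $d$ on $G$. Let $\mathcal{P}\defeq\{a\in\mathbb{R}G\mid a\ge 0,\ \|a\|_{1}=1\}$ and consider the linear map
\begin{displaymath}
    T\colon\mathbb{R}G\longrightarrow(\mathbb{R}G)^{E},\qquad a\longmapsto(a-ga)_{g\in E},
\end{displaymath}
whose restriction to $\mathcal{P}$ has convex image. I would argue that $0$ lies in the UEB-closure of $T(\mathcal{P})$ in the product space $(\mathbb{R}G)^{E}$: otherwise, strict Hahn--Banach separation of the compact convex set $\{0\}$ from the closed convex set $\overline{T(\mathcal{P})}$ in the locally convex space $(\mathbb{R}G)^{E}$ would yield a continuous linear functional $\Lambda$ and $c>0$ with $\Lambda(T(a))\ge c$ for every $a\in\mathcal{P}$. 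By Proposition~\ref{proposition:duality}, $\Lambda$ is represented by a tuple $(f_{g})_{g\in E}\in\mathrm{RUC}_{b}(G)^{E}$, giving
\begin{displaymath}
    a(F)\ge c\qquad\text{for all }a\in\mathcal{P},\quad\text{where }F\defeq\sum_{g\in E}\bigl(f_{g}-f_{g}\circ\lambda_{g}\bigr)\in\mathrm{RUC}_{b}(G).
\end{displaymath}
Testing with $a=\delta_{x}$ forces $F\ge c$ pointwise, so any left-invariant mean $\mu$ on $\mathrm{RUC}_{b}(G)$ satisfies $\mu(F)\ge c>0$; on the other hand, left-invariance annihilates each summand, whence $\mu(F)=0$, a contradiction. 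Therefore $0\in\overline{T(\mathcal{P})}$, and unwinding a basic UEB-neighborhood of $0$ produces the required $a\in\mathcal{P}$ with $p_{d}(a-ga)\le\epsilon$ for every $g\in E$.

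The main delicate point I expect is the correct identification of the continuous dual of $(\mathbb{R}G)^{E}$ in the product UEB topology and the verification that the resulting ``test function'' $F$ is genuinely right-uniformly continuous (so that the left-invariant mean may be applied to it); this is ultimately packaged in Proposition~\ref{proposition:duality}, but one must be careful that the semi-norms generating the UEB topology on $\mathbb{R}G$ can be indexed by right-invariant continuous pseudo-metrics, as recorded in the discussion preceding Lemma~\ref{lemma:contractive.functions}. The remaining bookkeeping --- checking the formulas $(ga)(f)=a(f\circ\lambda_{g})$ and constructing the auxiliary pseudo-metric $d_{f}$ for a single $f\in\mathrm{RUC}_{b}(G)$ --- is routine.
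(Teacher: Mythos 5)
Your proposal is correct and follows essentially the same route as the paper's proof: the necessity direction is the same Day-style Hahn--Banach separation in $(\mathbb{R}G)^{E}$ combined with the duality $(\mathbb{R}G)' \cong \mathrm{RUC}_{b}(G)$ and the evaluation at point masses $\delta_{x}$ to force the pointwise bound that contradicts an invariant mean, while the sufficiency direction uses the same auxiliary pseudo-metric $d_{f}(x,y)=\sup_{h}|f(xh)-f(yh)|$ to control $|(a-ga)(f)|$ by $p_{d_{f}}(a-ga)$. The only cosmetic difference is that you extract the invariant mean as a weak-$\ast$ cluster point of a net indexed by triples $(\epsilon,E,d)$, whereas the paper takes an ultrafilter limit; these are interchangeable.
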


\begin{proof} ($\Longrightarrow$) Our proof proceeds by contradiction. To this end, let us consider the set $T \defeq \{ a \in \mathbb{R}G \mid \Vert a \Vert_{1} = 1, \, a\geq 0 \}$. Suppose that there exist $\epsilon > 0$, a finite subset $E \subseteq G$ and a right-invariant continuous pseudo-metric $d$ on $G$ such that $\sup_{g \in E} p_{d}(a-ga) > \epsilon$ for all $a \in T$. Then $0$ is not contained in the closure of the convex subset $\{ (a - ga)_{g \in E} \mid a \in T \}$ in the locally convex space $(\mathbb{R}G)^{E}$. Applying the separation theorem for locally convex spaces, we conclude that there exists a continuous linear functional $F \in ((\mathbb{R}G)^E)'$ such that $F((a-ga)_{g \in E}) \geq 1$ for all $a \in T$. Since $((\mathbb{R}G)^E)' \cong ((\mathbb{R}G)')^{E}$, Proposition~\ref{proposition:duality} asserts the existence of $f_{g} \in \mathrm{RUC}_{b}(G)$ $(g \in E)$ such that $\sum_{g \in E} (a-ga)(f_{g}) \geq 1$ for all $a \in T$. That is, \begin{displaymath}
	a\left(\sum\nolimits_{g \in E} f_{g} - f_{g} \circ \lambda_{g}\right) = \sum\nolimits_{g \in E} a(f_{g}) - a(f_{g} \circ \lambda_{g}) = \sum\nolimits_{g \in E} (a-ga)(f_{g}) \geq 1
\end{displaymath} for all $a \in T$. Since $\{ \delta_{x} \mid x \in G \} \subseteq T$, it follows that $\sum_{g \in E} f_{g} - f_{g} \circ \lambda_{g} \geq 1$. Now, if $\mu$ was a left-invariant mean on $\mathrm{RUC}_{b}(G)$, then $\mu ( \sum_{g \in E} f_{g} - f_{g} \circ \lambda_{g}) \geq 1$, but also \begin{displaymath}
	\mu \left( \sum\nolimits_{g \in E} f_{g} - f_{g} \circ \lambda_{g}\right) = \sum\nolimits_{g \in E} \mu (f_{g}) - \mu (f_{g} \circ \lambda_{g}) = 0 ,
\end{displaymath} which would clearly constitute a contradiction. Hence, $G$ is not amenable.

($\Longrightarrow$) Consider the set $T \defeq \{ a \in \mathbb{R}G \mid \Vert a \Vert_{1} = 1, \, a\geq 0 \}$. Our assumption (together with the ultrafilter lemma) implies that there exists a set $I$ along with an ultrafilter $\mathscr{U}$ on $I$ and a family $(a_{i})_{i \in I} \in T^{I}$ such that $\lim_{i\to \mathscr{U}} p_{d}(a-ga) = 0$ for every $g \in G$ and every right-invariant continuous pseudo-metric $d$ on $G$. Let us define $\mu \colon \mathrm{RUC}_{b}(G) \to \mathbb{R}$ by \begin{displaymath}
\mu (f) \defeq \lim_{i \to \mathscr{U}} a_{i}(f) \qquad (f \in \mathrm{RUC}_{b}(G)) .
\end{displaymath} It is easy to see that $\mu$ is indeed a well-defined mean on $\mathrm{RUC}_{b}(G)$. In order to prove that $\mu$ is also left-invariant, let $f \in \mathrm{RUC}_{b}(G)$. Without loss of generality, we assume that $\Vert f \Vert_{\infty} \leq 1$. Consider the right-invariant continuous pseudo-metric $d$ on $G$ given by \begin{displaymath}
	d(x,y) \defeq \sup_{g \in G} \vert f(xg) - f(yg) \vert \qquad (x,y \in G) .
\end{displaymath} As $f \colon (G,d) \to [-1,1]$ is $1$-Lipschitz continuous, we conclude that \begin{displaymath}
	\vert \mu (f) - \mu (f \circ \lambda_{g}) \vert = \lim_{i\to \mathscr{U}} \vert (a_{i} - ga_{i})(f) \vert \leq \lim_{i \to \mathscr{U}} p_{d}(a_{i}-ga_{i}) = 0
\end{displaymath} for every $g \in G$. This completes the proof. \end{proof}

In the following theorem we collect some variations of Theorem~\ref{theorem:topological.day}, which may be of rather technical nature, but will turn out useful later on in Section~\ref{section:folner}. The most interesting bit about Theorem~\ref{theorem:better.day} is the equivalence of~(2) and~(3).

\begin{thm}\label{theorem:better.day} Let $G$ be a topological group. The following are equivalent. \begin{enumerate}
	\item[\emph{(1)}] $G$ is amenable.
	\item[\emph{(2)}] For every $\epsilon > 0$, every finite subset $E \subseteq G$, and every right-invariant continuous pseudo-metric $d$ on $G$, there exists $a \in \mathbb{R}G$ with $\Vert a \Vert_{1} = 1$ and $a \geq 0$ such that \begin{displaymath}
		\forall g \in E \, \forall f \colon (G,d) \to [0,1] \text{ $1$-Lipschitz} \colon \quad \vert a(f) - (ga)(f) \vert \leq \epsilon .
	\end{displaymath}
	\item[\emph{(3)}] There exists $\epsilon \in (0,1)$ such that, for every finite subset $E \subseteq G$ and every right-invariant continuous pseudo-metric $d$ on $G$, there exists $a \in \mathbb{R}G$ with $\Vert a \Vert_{1} = 1$ and $a \geq 0$ such that \begin{displaymath}
		\forall g \in E \, \forall f \colon (G,d) \to [0,1] \text{ $1$-Lipschitz} \colon \quad \vert a(f) - (ga)(f) \vert \leq \epsilon .
	\end{displaymath}
	\item[\emph{(4)}] There exists $\epsilon \in (0,1)$ such that, for every finite subset $E \subseteq G$ and every right-invariant continuous pseudo-metric $d$ on $G$, there exists $a \in \mathbb{R}G$ with $\Vert a \Vert_{1} = 1$ and $a \geq 0$ such that \begin{displaymath}
		\forall g \in E \colon \quad p_{d}(a - ga) \leq \epsilon .
	\end{displaymath}
\end{enumerate} \end{thm}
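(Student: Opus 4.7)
The plan is to close the cycle $(1)\Rightarrow(2)\Rightarrow(3)\Rightarrow(4)\Rightarrow(1)$. The first two links are essentially formal: for $(1)\Rightarrow(2)$, Theorem~\ref{theorem:topological.day} produces, given $\epsilon$, $E$, and $d$, an element $a\in\mathbb{R}G$ with $\|a\|_{1}=1$, $a\geq 0$, and $p_{d}(a-ga)\leq\epsilon$ for every $g\in E$; since every $1$-Lipschitz $f\colon(G,d)\to[0,1]$ is in particular a $1$-Lipschitz map into $[-1,1]$, we obtain $|a(f)-(ga)(f)|\leq p_{d}(a-ga)\leq\epsilon$, as required. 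The implication $(2)\Rightarrow(3)$ is pure instantiation of $\epsilon$ at any value in $(0,1)$.

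The substantive link is $(3)\Rightarrow(4)$. The core idea is the decomposition $f=f^{+}-f^{-}$ of an arbitrary $1$-Lipschitz $f\colon(G,d)\to[-1,1]$ into its positive and negative parts $f^{\pm}\defeq\max(\pm f,0)$; since the map $t\mapsto\max(t,0)$ is $1$-Lipschitz on $\mathbb{R}$, each $f^{\pm}\colon(G,d)\to[0,1]$ is itself $1$-Lipschitz, and so the $a\in T$ provided by $(3)$ yields $|a(f)-(ga)(f)|\leq|a(f^{+})-(ga)(f^{+})|+|a(f^{-})-(ga)(f^{-})|\leq 2\epsilon$, i.e.\ $p_{d}(a-ga)\leq 2\epsilon$. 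When the $\epsilon$ from $(3)$ satisfies $\epsilon<1/2$ this directly yields $(4)$ at level $2\epsilon\in(0,1)$. For the remaining range $\epsilon\in[1/2,1)$ one first has to bootstrap $(3)$ itself to a strictly smaller constant: given any $(E,d)$ and any target $\epsilon'\in(0,\epsilon)$, I would construct a new witness $a'\in T$ with the desired $[0,1]$-Lipschitz bound $\epsilon'$ by combining several witnesses supplied by $(3)$ (applied to enlarged finite sets and refined pseudo-metrics) through convolution on the right. Lemma~\ref{lemma:contractive.functions} is the essential tool here: it guarantees that if $f\colon(G,d)\to[0,1]$ is $1$-Lipschitz and $b\in T$, then $R_{b}f$ is again $1$-Lipschitz and $[0,1]$-valued, so no Lipschitz control is lost when one passes to convolution products and decomposes $ga_{1}a_{2}-a_{1}a_{2}=(ga_{1}-a_{1})a_{2}$.

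Finally, $(4)\Rightarrow(1)$ is an ultrafilter construction of the type already carried out in the proof of Theorem~\ref{theorem:topological.day}: indexing over the directed set of pairs $(E,d)$ and taking a limit of the corresponding $a_{E,d}$ along a suitable ultrafilter yields a mean $\mu$ on $\mathrm{RUC}_{b}(G)$ with $|\mu(f)-(g\mu)(f)|\leq\epsilon\|f\|_{\infty}$ for every $g\in G$ and every $f\in\mathrm{RUC}_{b}(G)$; the same amplification mechanism outlined above upgrades this approximately invariant mean to a genuinely $G$-invariant one. The main obstacle throughout is exactly that amplification step: the factor-of-$2$ loss in the decomposition cannot be improved by a single application of $(3)$, so one must exhibit a mechanism for composing multiple approximate-invariance statements while respecting the right-uniform structure on $G$, so that Lipschitz constants of test functions are not destroyed by left translations (which do not preserve a right-invariant pseudo-metric). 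This parallels the classical fact for discrete groups that a Day-type inequality holding at some $\epsilon<2$ can be bootstrapped to the full Reiter property, and it is this bootstrap—rather than any of the formal inclusions—that forms the technical heart of the equivalence between $(2)$ and $(3)$ singled out in the paragraph preceding the theorem.
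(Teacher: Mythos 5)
Your overall architecture is sound and close in spirit to the paper's: the paper closes the cycle as $(1)\Rightarrow(4)\Rightarrow(3)\Rightarrow(2)\Rightarrow(1)$, with $(1)\Rightarrow(4)$ from Theorem~\ref{theorem:topological.day}, with $(2)\Rightarrow(1)$ via exactly your decomposition $f=f^{+}-f^{-}$ (at level $\epsilon/2$), and with all the real work in $(3)\Rightarrow(2)$; you route the same work through $(3)\Rightarrow(4)$ instead, which is a harmless permutation. The problem is that this hard step --- the amplification of a fixed $\epsilon\in(0,1)$ down to an arbitrary $\epsilon'>0$ --- is precisely the content the theorem exists to record (the paper flags the equivalence of $(2)$ and $(3)$ as the interesting bit), and you do not actually carry it out: ``I would construct a new witness $a'$ \dots by combining several witnesses \dots through convolution'' is a statement of intent, not an argument. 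As written, your proof is complete only when the $\epsilon$ furnished by $(3)$ happens to satisfy $\epsilon<1/2$.

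Concretely, here is what is missing and why it is not routine. Setting $b\defeq a_{0}\cdots a_{n-1}$ and using $(gb)(f)=(ga_{0})(R_{a_{1}\cdots a_{n-1}}f)$, one wants to apply the hypothesis of $(3)$ to the function $R_{a_{1}\cdots a_{n-1}}f$ at each stage; but $(3)$ only quantifies over $1$-Lipschitz functions with values in $[0,1]$, and after rescaling the pseudo-metric to turn an estimate of size $\epsilon^{k}$ into one of size $\epsilon^{k+1}$ (the paper uses $d_{i}\defeq\epsilon^{i+1-n}d$), the normalized function $f_{i}\defeq\epsilon^{i+1-n}\bigl(R_{a_{i+1}\cdots a_{n-1}}f-\min_{E_{i+1}}R_{a_{i+1}\cdots a_{n-1}}f\bigr)$ is $1$-Lipschitz for $d_{i}$ but has no a priori reason to take values in $[0,1]$. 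The paper must therefore truncate to $f_{i}^{\ast}\defeq(f_{i}\wedge 1)\vee 0$ and prove, by a downward induction (its Claim~2) over the enlarged sets $E_{i}\defeq E_{i-1}\cup E_{i-1}(\spt a_{i-1})$, that $f_{i}$ and $f_{i}^{\ast}$ agree on $E_{i+1}$, so that $(ga_{i})(f_{i})=(ga_{i})(f_{i}^{\ast})$ and the hypothesis applies. It also needs a preliminary two-sided strengthening (its Claim~1, replacing $|a(f)-(ga)(f)|$ by $|(ga)(f)-(ha)(f)|$ via the pseudo-metric $d'(x,y)\defeq\sup_{g\in E}d(gx,gy)$) so that the minimum can be taken over $E_{i+1}$ at all. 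Your observation that $R_{b}f$ stays $[0,1]$-valued when $f$ is (a consequence of Lemma~\ref{lemma:contractive.functions} and positivity) does not address this, because the rescaling step destroys the $[0,1]$-bound and that is exactly where the induction is needed. The same gap propagates to your $(4)\Rightarrow(1)$, which again defers to ``the same amplification mechanism outlined above.'' Until the bootstrap is written out, the theorem is only proved under the additional hypothesis $\epsilon<1/2$ in $(3)$ and $(4)$.
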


\begin{proof} (1)$\Longrightarrow$(4). This is an immediate consequence of Theorem~\ref{theorem:topological.day}.
	
(4)$\Longrightarrow$(3). This is obvious.
	
(2)$\Longrightarrow$(1). We are going to utilize Theorem~\ref{theorem:topological.day}. To this end, let $\epsilon > 0$, let $E$ be a finite subset of $G$ and let $d$ be a right-invariant continuous pseudo-metric on~$G$. By assumption, there exists $a \in \mathbb{R}G$ with $\Vert a \Vert_{1} = 1$ and $a \geq 0$ such that \begin{displaymath}
	\forall g \in E \, \forall f \colon (G,d) \to [0,1] \text{ $1$-Lipschitz} \colon \quad \vert a(f) - (ga)(f) \vert \leq \tfrac{\epsilon}{2} .
\end{displaymath} We argue that $p_{d}(a - ga) \leq \epsilon$ for all $g \in E$. Let $f \colon (G,d) \to [-1,1]$ be $1$-Lipschitz continuous. Consider the two $1$-Lipschitz functions $f^{+},f^{-} \colon (G,d) \to [0,1]$ given by $f^{+}(x) \defeq f(x) \vee 0$ and $f^{-}(x) \defeq - (f(x) \wedge 0)$ for $x \in G$. It follows that \begin{displaymath}
	\vert a(f) - (ga)(f) \vert \leq \vert a(f^{+}) - (ga)(f^{+}) \vert + \vert a(f^{-}) - (ga)(f^{-}) \vert \leq \epsilon 
\end{displaymath} for every $g \in E$, which completes the argument.
	
(3)$\Longrightarrow$(2). Suppose that~(3) holds for some $\epsilon \in (0,1)$. Let us first prove the following.

\textit{Claim 1.} For every finite subset $E \subseteq G$ and every right-invariant continuous pseudo-metric $d$ on $G$, there exists $a \in \mathbb{R}G$ with $\Vert a \Vert_{1} = 1$ and $a \geq 0$ such that \begin{displaymath}
	\forall g,h \in E \, \forall f \colon (G,d) \to [0,1] \text{ $1$-Lipschitz} \colon \quad \vert (ga)(f) - (ha)(f) \vert \leq \epsilon .
\end{displaymath}

\textit{Proof of Claim 1.} Let $E$ be a finite subset of $G$ and let $d$ be a right-invariant continuous pseudo-metric on~$G$. Consider the right-invariant continuous pseudo-metric $d'$ on $G$ defined by $d'(x,y) \defeq \sup_{g \in E} d(gx,gy)$ for $x,y \in G$. Due to~(3), there exists $a \in \mathbb{R}G$ with $\Vert a \Vert_{1} = 1$ and $a \geq 0$ such that \begin{displaymath}
	\forall g \in E^{-1}E \, \forall f \colon (G,d') \to [0,1] \text{ $1$-Lipschitz} \colon \quad \vert a(f) - (ga)(f) \vert \leq \epsilon .
\end{displaymath} Now, if $g,h \in E$ and $f \colon (G,d) \to [0,1]$ is $1$-Lipschitz continuous, then $f \circ \lambda_{g} \colon (G,d') \to [0,1]$ is $1$-Lipschitz continuous, and hence \begin{displaymath}
	\vert (ga)(f) - (ha)(f) \vert = \vert a(f \circ \lambda_{g}) - (g^{-1}ha)(f \circ \lambda_{g}) \vert \leq \epsilon . \qed
\end{displaymath} 
	
Now we may prove~(2). Consider $\epsilon' > 0$, a finite subset $E \subseteq G$, and a right-invariant continuous pseudo-metric $d$ on $G$. Without loss of generality, assume that $e \in E$. Then there exists some $n \geq 1$ such that $\epsilon^{n} \leq \epsilon'$. Let $d_{i} \defeq \epsilon^{i+1-n}d$ for $i \in \{ 0,\ldots,n-1 \}$. Due to Claim~1, there exists $a_{0} \in \mathbb{R}G$ with $\Vert a_{0} \Vert_{1} = 1$ and $a_{0} \geq 0$ such that \begin{displaymath}
	\forall g,h \in E_{0} \defeq E \, \forall f \colon (G,d_{0}) \to [0,1] \text{ $1$-Lipschitz}\colon \quad \vert (ga_{0})(f) - (ha_{0})(f) \vert \leq \epsilon .
\end{displaymath} Recursively, for each $i \in \{ 1,\ldots,n-1 \}$, we define $E_{i} \defeq E_{i-1} \cup E_{i-1}(\spt a_{i-1})$ and we may apply Claim~1 to choose an element $a_{i} \in \mathbb{R}G$ with $\Vert a_{i} \Vert_{1} = 1$ and $a_{i} \geq 0$ such that \begin{displaymath}
	\forall g,h \in E_{i} \, \forall f \colon (G,d_{i}) \to [0,1] \text{ $1$-Lipschitz}\colon \quad \vert (ga_{i})(f) - (ha_{i})(f) \vert \leq \epsilon .
\end{displaymath} Let $b \defeq a_{0}\cdots a_{n-1}$. Of course, $\Vert b \Vert_{1} = 1$ and $b \geq 0$. We show that $\vert (gb)(f) - (hb)(f) \vert \leq \epsilon'$ for every $1$-Lipschitz continuous function $f \colon (G,d) \to [0,1]$ and all $g,h \in E$. Consider any $1$-Lipschitz continuous function $f \colon (G,d) \to [0,1]$. Then Lemma~\ref{lemma:contractive.functions} asserts the following: for each $i \in \{ 0,\ldots,n-2 \}$, we obtain two $1$-Lipschitz continuous functions $f_{i} \colon (G,d_{i}) \to \mathbb{R}$ and $f^{\ast}_{i} \colon (G,d_{i}) \to [0,1]$ by setting \begin{align*}
	f_{i} (g) &\defeq \epsilon^{i+1-n} \left((R_{a_{i+1}\cdots a_{n-1}}f)(g)-\min_{h \in E_{i+1}} (R_{a_{i+1}\cdots a_{n-1}}f)(h)\right) & (g \in G) , \\
	f^{\ast}_{i} (g) &\defeq (f_{i}(g) \wedge 1) \vee 0 & (g \in G) .
\end{align*} 

\textit{Claim 2.} $f_{i}\vert_{E_{i+1}} = f^{\ast}_{i}\vert_{E_{i+1}}$ for every $i \in \{ 0,\ldots ,n-2 \}$.

\textit{Proof of Claim 2.} Our proof will proceed by downward induction starting at $i = n-2$. Since $d_{n-1} = d$, our choice of $a_{n-1}$ readily implies that \begin{align*}
	0 \leq (R_{a_{n-1}}f)(g)-\min_{h \in E_{n-1}} (R_{a_{n-1}}f)(h) &= \max_{h \in E_{n-1}} (ga_{n-1})(f) - (ha_{n-1})(f) \\
	&\leq \max_{h \in E_{n-1}} p_{d}(ga_{n-1} - ha_{n-1}) \leq \epsilon ,
\end{align*} for all $g \in E_{n-1}$, which means that $f_{n-2}(E_{n-1}) \subseteq [0,1]$ and therefore $f_{n-2}\vert_{E_{n-1}} = f_{n-2}^{\ast}\vert_{E_{n-1}}$. For the inductive step, let us assume that $f_{i+1}\vert_{E_{i+2}} = f^{\ast}_{i+1}\vert_{E_{i+2}}$ for some $i \in \{ 0,\ldots,n-3 \}$. Since $E_{i+1}(\spt a_{i+1}) \subseteq E_{i+2}$, this implies that $(ga_{i+1})(f_{i+1}) = (ga_{i+1})(f_{i+1}^{\ast})$ for all $g \in E_{i+1}$. By our choice of $a_{i+1}$, it follows that \begin{align*}
	0 \ &\leq \ R_{a_{i+1}\cdots a_{n-1}}f(g)-\min_{h \in E_{i+1}} R_{a_{i+1}\cdots a_{n-1}}f(h) \\
		&= \ \max_{h \in E_{i+1}} (ga_{i+1})(R_{a_{i+2}\cdots a_{n-1}}f)-(ha_{i+1})(R_{a_{i+2}\cdots a_{n-1}}f) \\
		&= \ \epsilon^{n-i-2} \left( \max_{h \in E_{i+1}} (ga_{i+1})(f_{i+1}) - (ha_{i+1})(f_{i+1}) \right) \\
		&= \ \epsilon^{n-i-2} \left( \max_{h \in E_{i+1}} (ga_{i+1})(f_{i+1}^{\ast}) - (ha_{i+1})(f_{i+1}^{\ast}) \right) \\
		&\leq \ \epsilon^{n-i-2} \max_{h \in E_{i+1}} p_{d_{i+1}}(ga_{i+1} - ha_{i+1}) \ \leq \ \epsilon^{n-i-1}
\end{align*} whenever $g \in E_{i+1}$. Hence, $f_{i}(E_{i+1}) \subseteq [0,1]$ and therefore $f_{i}\vert_{E_{i+1}} = f_{i}^{\ast}\vert_{E_{i+1}}$. This completes our induction and thus proves Claim~2. $\qed$ 

In particular, $f_{0}\vert_{E_{1}} = f_{0}^{\ast}\vert_{E_{1}}$. As $E_{0}(\spt a_{0}) \subseteq E_{1}$, we conclude that $(ga_{0})(f_{0}) = (ga_{0})(f_{0}^{\ast})$ whenever $g \in E_{0} = E$. Hence, our choice of $a_{0}$ now implies that \begin{align*}
	\vert (gb)(f) - (hb)(f) \vert &= \vert (ga_{0})(R_{a_{1}\cdots a_{n-1}}f)-(ha_{0})(R_{a_{1}\cdots a_{n-1}}f) \vert = \epsilon^{n-1} \vert (ga_{0})(f_{0}) - (ha_{0})(f_{0}) \vert \\
	&= \epsilon^{n-1} \vert (ga_{0})(f_{0}^{\ast}) - (ha_{0})(f_{0}^{\ast}) \vert = \epsilon^{n-1} p_{d_{0}}(ga_{0} - ha_{0}) \leq \epsilon^{n} \leq \epsilon'
\end{align*} for all $g,h \in E$, and we are done. \end{proof}

\section{F\o lner conditions for topological groups}\label{section:folner}

Our next objective is to establish a topological version of the following well-known amenability criterion for discrete groups due to F\o lner~\cite{folner}. The reader is referred to~\cite{namioka} for a very short and lucid proof of F\o lner's theorem.

\begin{thm}[\cite{folner}]\label{theorem:folner} A group $G$ is amenable if and only if, for every $\theta \in (0,1)$ and every finite subset $E \subseteq G$, there exists a finite non-empty subset $F \subseteq G$ such that \begin{displaymath}
	\forall g \in E \colon \quad \vert F \cap gF \vert \geq \theta \vert F \vert .
\end{displaymath} \end{thm}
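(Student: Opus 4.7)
The plan is to deduce both directions from tools already developed in the paper, viewing the abstract group $G$ as a discrete topological group. For the \emph{easy} direction (Følner implies amenable), suppose the Følner condition holds and, for each pair $(E,\theta)$, choose a witness $F = F_{E,\theta}$. The means $\mu_{F}(f) \defeq \frac{1}{|F|}\sum_{x \in F} f(x)$ on $\ell^{\infty}(G)$ satisfy
\[
|\mu_{F}(f) - \mu_{F}(f \circ \lambda_{g})| \ \leq \ \frac{|F \triangle gF|}{|F|}\|f\|_{\infty} \ \leq \ 2(1-\theta)\|f\|_{\infty} \qquad (g \in E),
\]
so any weak-$\ast$ cluster point of $(\mu_{F_{E,\theta}})$ along the natural directed set (ordered by $E \nearrow G$ and $\theta \to 1$) is a left-invariant mean on $\ell^{\infty}(G)$.

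For the converse I would specialise Theorem~\ref{theorem:topological.day} to the discrete setting. Since every pseudo-metric on a discrete group is right-uniformly continuous, the $0/1$-distance $d_{0}(x,y) \defeq \mathbf{1}_{x \ne y}$ is admissible, and for mean-zero elements $b \in \mathbb{R}G$ one checks that $p_{d_{0}}(b) = \tfrac{1}{2}\|b\|_{1}$ (after shifting $f$ by a constant, the $1$-Lipschitz condition with respect to $d_{0}$ amounts to $\|f\|_{\infty} \leq \tfrac{1}{2}$). Applied with this pseudo-metric, Theorem~\ref{theorem:topological.day} furnishes, for every $\epsilon > 0$ and every finite $E \subseteq G$, a vector $a \in \mathbb{R}G$ with $a \geq 0$, $\|a\|_{1} = 1$ and $\|a - ga\|_{1} \leq \epsilon$ for all $g \in E$ --- the classical almost-invariant-vector criterion of Day.

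The final step turns $a$ into a Følner set via Namioka's layer-cake trick. For $t > 0$ set $F_{t} \defeq \{x \in G : a(x) > t\}$; the identity $(ga)(x) = a(g^{-1}x)$ yields $\{x : (ga)(x) > t\} = gF_{t}$, and integration of indicators gives
\[
\|a\|_{1} \ = \ \int_{0}^{\infty} |F_{t}| \, dt \qquad \text{and} \qquad \|a - ga\|_{1} \ = \ \int_{0}^{\infty} |F_{t} \triangle gF_{t}| \, dt.
\]
Hence $\int_{0}^{\infty} \sum_{g \in E} |F_{t} \triangle gF_{t}| \, dt \leq |E|\epsilon$. Choosing $\epsilon < 2(1-\theta)/|E|$ and arguing by contradiction: if at every level $t$ with $F_{t} \neq \emptyset$ some $g \in E$ violated $|F_{t} \triangle gF_{t}| \leq 2(1-\theta)|F_{t}|$, then the pointwise inequality $\sum_{g \in E}|F_{t} \triangle gF_{t}| > 2(1-\theta)|F_{t}|$ would hold on $\{t : F_{t} \ne \emptyset\}$, and integrating would force $|E|\epsilon > 2(1-\theta)\|a\|_{1} = 2(1-\theta)$, a contradiction. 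Thus there exists a level $t_{0}$ at which every $g \in E$ simultaneously satisfies $|F_{t_{0}} \triangle gF_{t_{0}}| \leq 2(1-\theta)|F_{t_{0}}|$, and since $|gF_{t_{0}}| = |F_{t_{0}}|$ this reads $|F_{t_{0}} \cap gF_{t_{0}}| \geq \theta|F_{t_{0}}|$. The principal difficulty is exactly the \emph{simultaneous} choice of one $t_{0}$ that is good for every $g \in E$ at once: a good level for any individual $g$ is immediate from Markov's inequality, and the averaging in $t$ above is what synchronises the choice across all of $E$.
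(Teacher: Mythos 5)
The paper does not prove this theorem: it is quoted as a classical result, with a citation to F\o lner's original article and a pointer to Namioka's paper for ``a very short and lucid proof''. Your argument is correct and is essentially that standard Day--Namioka proof: the easy direction via weak-$\ast$ cluster points of the normalised counting measures $\delta_{F}$, and the converse via almost-invariant $\ell^{1}$-vectors followed by the layer-cake decomposition $F_{t} = \{x : a(x) > t\}$, with the averaging over $t$ doing exactly the work you identify of synchronising the level across all $g \in E$. Your reduction of the almost-invariant-vector step to the paper's Theorem~\ref{theorem:topological.day} with the $0/1$-metric is sound; the only cosmetic point is that $p_{d_{0}}(a-ga) \leq \epsilon$ gives $\Vert a - ga \Vert_{1} \leq 2\epsilon$ rather than $\epsilon$, which is harmless after rescaling $\epsilon$.
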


Our topological version of F\o lner's theorem will be in terms of matchings with respect to identity neighborhoods in topological groups (Theorem~\ref{theorem:topological.folner}), and its proof will rely on the results of Section~\ref{section:day} as well as a quantitative version of Hall's marriage theorem (Theorem~\ref{theorem:hall}).

For a start, let us clarify some terminology and notation. Let $\mathscr{B} = (X,Y,R)$ be a \emph{bipartite graph}, i.e., a triple consisting of two finite sets $X$ and $Y$ and a relation $R \subseteq X \times Y$. If $S \subseteq X$, then we define $N_{\mathscr{B}}(S) \defeq \{ y \in Y \mid \exists x \in S \colon (x,y) \in R \}$. A \emph{matching} in $\mathscr{B}$ is an injective map $\phi \colon D \to Y$ such that $D \subseteq X$ and $(x,\phi(x)) \in R$ for all $x \in D$. A matching $\phi$ in $\mathscr{B}$ is said to be \emph{perfect} if $\dom (\phi) = X$. Furthermore, the \emph{matching number} of $\mathscr{B}$ is defined to be \begin{displaymath}
	\mu (\mathscr{B}) \defeq \sup \{ |\dom \phi | \mid \phi \textnormal{ matching in } \mathscr{B} \} .
\end{displaymath} For convenience, we restate Hall's well-known matching theorem.

\begin{thm}[\cite{Hall35}, \cite{Ore}]\label{theorem:hall} If $\mathscr{B} = (X,Y,R)$ is a bipartite graph, then \begin{displaymath}
	\mu (\mathscr{B}) = |X| - \sup \{ |S| - |N_{\mathscr{B}}(S)| \mid S \subseteq X \} .
\end{displaymath} \end{thm}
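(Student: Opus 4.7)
The statement is the deficiency version of Hall's theorem (often called the König--Ore formula), so the plan is the standard two-inequality argument, using the ordinary form of Hall's marriage theorem as a black box for the nontrivial direction.

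For the easy inequality $\mu(\mathscr{B}) \leq |X| - \sup_{S \subseteq X}(|S|-|N_{\mathscr{B}}(S)|)$, I would take any matching $\phi \colon D \to Y$ and any $S \subseteq X$ and observe that $\phi(D \cap S) \subseteq N_{\mathscr{B}}(S)$ with $\phi$ injective, hence $|D \cap S| \leq |N_{\mathscr{B}}(S)|$. Then
\[
|D| = |D \cap S| + |D \setminus S| \leq |N_{\mathscr{B}}(S)| + |X| - |S|,
\]
so $|D| \leq |X| - (|S| - |N_{\mathscr{B}}(S)|)$; taking the supremum over $S$ and then over matchings $\phi$ yields the inequality.

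For the reverse inequality, set $d \defeq \sup\{|S| - |N_{\mathscr{B}}(S)| \mid S \subseteq X\}$ (which is nonnegative, realized by $S = \emptyset$). The plan is to reduce to the classical Hall marriage theorem by augmenting $\mathscr{B}$: let $Z$ be a set of $d$ fresh vertices disjoint from $Y$, and form the bipartite graph $\mathscr{B}' \defeq (X, Y \cupdot Z, R \cup (X \times Z))$, in which every vertex of $X$ is adjacent to every dummy vertex. For any $S \subseteq X$,
\[
|N_{\mathscr{B}'}(S)| = |N_{\mathscr{B}}(S)| + d \geq |S|
\]
by the very definition of $d$, so $\mathscr{B}'$ satisfies Hall's condition. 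Classical Hall (applied to $\mathscr{B}'$) therefore produces a perfect matching $\psi \colon X \to Y \cupdot Z$; restricting $\psi$ to $\psi^{-1}(Y)$ yields a matching in $\mathscr{B}$ whose domain has cardinality at least $|X| - |Z| = |X| - d$, giving $\mu(\mathscr{B}) \geq |X| - d$.

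The only subtle point is the appeal to ordinary Hall's theorem, which is cited from the same references and which I treat as given; everything else is counting. The main (minor) obstacle is making sure the deficiency $d$ is actually attained so that the dummy set $Z$ has the right size and the condition $|N_{\mathscr{B}'}(S)| \geq |S|$ holds for every $S$, not just those attaining the supremum --- but this is precisely what taking $d$ as the supremum and $|Z| = d$ guarantees, since the deficiency on any particular $S$ is at most $d$.
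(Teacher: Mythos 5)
Your proof is correct. The paper does not actually prove this statement --- it is quoted from Hall and Ore as a known result --- so there is nothing to compare against; your argument is the standard derivation of the deficiency (K\"onig--Ore) formula from the classical marriage theorem, with both inequalities handled properly: the counting bound $|D| \leq |N_{\mathscr{B}}(S)| + |X| - |S|$ for the upper estimate, and the augmentation by $d$ dummy vertices joined to all of $X$ for the lower one. The only microscopic blemish is that the displayed equality $|N_{\mathscr{B}'}(S)| = |N_{\mathscr{B}}(S)| + d$ fails for $S = \emptyset$ when $d > 0$; but Hall's condition is vacuous there, so this costs nothing.
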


\begin{cor}[\cite{Hall35}]\label{corollary:hall} A bipartite graph $\mathscr{B} = (X,Y,R)$ admits a perfect matching if and only if $|S| \leq |N_{\mathscr{B}}(S)|$ for every subset $S \subseteq X$. \end{cor}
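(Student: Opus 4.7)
The plan is to derive the corollary directly from Theorem~\ref{theorem:hall}, since the statement is simply the characterization of the case $\mu(\mathscr{B}) = |X|$.

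First I would observe that a perfect matching in $\mathscr{B} = (X,Y,R)$ is, by definition, a matching with $\dom(\phi) = X$, so a perfect matching exists if and only if $\mu(\mathscr{B}) = |X|$ (the supremum in the definition of $\mu$ is attained, being over a finite set of matchings). This reduces the problem to showing that $\mu(\mathscr{B}) = |X|$ is equivalent to the marriage condition $|S| \leq |N_{\mathscr{B}}(S)|$ for every $S \subseteq X$.

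For the ($\Longrightarrow$) direction, which I would dispose of first as a sanity check, a perfect matching $\phi \colon X \to Y$ restricts to an injection $\phi|_S \colon S \to N_{\mathscr{B}}(S)$ for any $S \subseteq X$, immediately yielding $|S| \leq |N_{\mathscr{B}}(S)|$; this does not use Theorem~\ref{theorem:hall}.

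For the ($\Longleftarrow$) direction I would invoke Theorem~\ref{theorem:hall}: the marriage condition says $|S| - |N_{\mathscr{B}}(S)| \leq 0$ for every $S \subseteq X$, while taking $S = \emptyset$ gives $|S| - |N_{\mathscr{B}}(S)| = 0$, so
\[
\sup \{ |S| - |N_{\mathscr{B}}(S)| \mid S \subseteq X \} = 0.
\]
Substituting into the formula of Theorem~\ref{theorem:hall} yields $\mu(\mathscr{B}) = |X|$, and any matching of size $|X|$ is perfect. I do not anticipate any obstacle here: the entire content of the corollary is contained in the quantitative formula of Theorem~\ref{theorem:hall}, and the only small point to be careful about is including $S = \emptyset$ when asserting that the supremum equals $0$ (rather than being negative), so that the marriage condition really forces the supremum to be exactly $0$.
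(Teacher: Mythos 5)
Your proposal is correct and matches the paper's intent exactly: the corollary is stated as an immediate consequence of Theorem~\ref{theorem:hall} (the paper gives no separate proof), and your derivation --- perfect matching iff $\mu(\mathscr{B}) = |X|$ iff the supremum $\sup\{|S| - |N_{\mathscr{B}}(S)| \mid S \subseteq X\}$ vanishes, with the $S = \emptyset$ term guaranteeing the supremum is exactly $0$ under the marriage condition --- is precisely the intended argument. Nothing further is needed.
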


In what follows, we will have a closer look at certain matchings in topological groups.

\begin{definition} Let $G$ be a topological group, $E,F \in \mathscr{F}(G)$, and $U \in \mathscr{U}_{e}(G)$. We define the bipartite graph $\mathscr{B} (E,F,U) \defeq (E,F,R(E,F,U))$ with the relation given by \begin{displaymath}
	R(E,F,U) \defeq \{ (x,y) \in E \times F \mid yx^{-1} \in U \} .
\end{displaymath} Furthermore, we abbreviate $\mu (E,F,U) \defeq \mu (\mathscr{B}(E,F,U))$. \end{definition}

Clearly, if $E$ and $F$ are finite subsets of a topological group $G$, then \begin{displaymath}
	\mu (E,F,U) \leq \mu (E,F,V)
\end{displaymath} for any pair of identity neighborhoods $U,V \in \mathscr{U}_{e}(G)$ with $U \subseteq V$. Moreover, if $G$ is a discrete group, then $\mu (E,F,\{ e \}) = \vert E \cap F \vert$ for any two finite subsets $E ,F \subseteq G$. Hence, the following theorem may be regarded as a topological version of F\o lner's amenability criterion.

\begin{thm}\label{theorem:topological.folner} Let $G$ be a Hausdorff topological group. The following are equivalent. \begin{enumerate}
	\item[\emph{(1)}] $G$ is amenable.
	\item[\emph{(2)}] For every $\theta \in (0,1)$, every finite subset $E \subseteq G$, and every $U \in \mathscr{U}_{e}(G)$, there exists a finite non-empty subset $F \subseteq G$ such that \begin{displaymath}
		\forall g \in E \colon \quad \mu (F,gF,U) \geq \theta \vert F \vert .
	\end{displaymath}
	\item[\emph{(3)}] There exists $\theta \in (0,1)$ such that, for every finite subset $E \subseteq G$ and every $U \in \mathscr{U}_{e}(G)$, there exists a finite non-empty subset $F \subseteq G$ such that \begin{displaymath}
		\forall g \in E \colon \quad \mu (F,gF,U) \geq \theta \vert F \vert .
	\end{displaymath}
\end{enumerate} \end{thm}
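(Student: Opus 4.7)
The equivalence (2)$\Leftrightarrow$(3) reduces to (2)$\Rightarrow$(3), which is immediate. For (1)$\Rightarrow$(2) I would translate the almost invariance supplied by Theorem~\ref{theorem:topological.day} into a matching statement via Lemma~\ref{lemma:density} and Hall's theorem (Theorem~\ref{theorem:hall}); for (3)$\Rightarrow$(1) I would run the dictionary in reverse and verify the $[0,1]$-valued criterion of Theorem~\ref{theorem:better.day}(3). In both cases Hall's theorem is the quantitative bridge: a matching defect furnishes sets $S \subseteq F$, $T \subseteq gF$ that are $d$-separated whenever the unit ball of $d$ at $e$ sits inside $U$, and a standard cut-off converts this separation into a $1$-Lipschitz test function.

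\textbf{(1)$\Rightarrow$(2).} Fix $\theta$, $E$, $U$; enlarge $E$ to contain $e$. Pick a right-invariant continuous pseudo-metric $d$ on $G$ with $\{z \in G : d(z, e) < 1\} \subseteq U$ (possible because such pseudo-metrics generate the right uniformity of $G$). Set $d'(x, y) \defeq \sup_{g \in E} d(gx, gy)$; this is again a right-invariant continuous pseudo-metric dominating $d$, and the inequality $d(gx, gy) \leq d'(x, y)$ for $g \in E$ yields $p_d(gb) \leq p_{d'}(b)$ for every $b \in \mathbb{R}G$, since if $f \colon (G, d) \to [-1, 1]$ is $1$-Lipschitz then so is $f \circ \lambda_g \colon (G, d') \to [-1, 1]$. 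Theorem~\ref{theorem:topological.day} supplies a positive, $\Vert \cdot \Vert_1$-normalized $a \in \mathbb{R}G$ with $p_{d'}(a - ga) \leq (1-\theta)/3$ for every $g \in E$. Provided $G$ is non-discrete, hence perfect Hausdorff as a uniform space, Lemma~\ref{lemma:density} yields $F \in \mathscr{F}(G) \setminus \{\emptyset\}$ with $p_{d'}(a - \delta_F) \leq (1-\theta)/3$, and the triangle inequality combined with the preceding bounds produces $p_d(\delta_F - g\delta_F) \leq 1 - \theta$ for each $g \in E$. (When $G$ is discrete, $\{e\} \in \mathscr{U}_e(G)$ and classical F\o lner's Theorem~\ref{theorem:folner} already yields the required $F$.) Now assume for contradiction that $\mu(F, gF, U) < \theta|F|$ for some $g \in E$. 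By Theorem~\ref{theorem:hall} there is $S \subseteq F$ with $|S| - |N(S)| > (1-\theta)|F|$, where $N(S) \defeq N_{\mathscr{B}(F, gF, U)}(S)$; setting $T \defeq gF \setminus N(S)$, every $(x, y) \in S \times T$ has $yx^{-1} \notin U$, so $d(x, y) = d(yx^{-1}, e) \geq 1$ by right-invariance. The $1$-Lipschitz function $f(z) \defeq \min(1, d(z, S)) \colon (G, d) \to [0, 1]$ vanishes on $S$ and equals $1$ on $T$, whence
\[
(g\delta_F - \delta_F)(f) \;\geq\; \frac{|T|}{|F|} - \frac{|F| - |S|}{|F|} \;=\; \frac{|S| - |N(S)|}{|F|} \;>\; 1 - \theta,
\]
contradicting $p_d(\delta_F - g\delta_F) \leq 1 - \theta$.

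\textbf{(3)$\Rightarrow$(1).} I verify condition~(3) of Theorem~\ref{theorem:better.day} with the fixed constant $\epsilon \defeq 1 - \theta/2$. Let $E \subseteq G$ be finite and let $d$ be a right-invariant continuous pseudo-metric on $G$. Set $\eta \defeq \theta/2$ and pick any $U \in \mathscr{U}_e(G)$ with $U \subseteq \{z : d(z, e) < \eta\}$ (the right-hand side is an open neighbourhood of $e$ by continuity of $d$). Hypothesis~(3) furnishes $F \in \mathscr{F}(G) \setminus \{\emptyset\}$ and, for each $g \in E$, a matching $\phi_g \colon D_g \to gF$ in $\mathscr{B}(F, gF, U)$ with $|D_g| \geq \theta|F|$; by choice of $U$, $d(x, \phi_g(x)) < \eta$ for every $x \in D_g$. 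For any $1$-Lipschitz $f \colon (G, d) \to [0, 1]$ and $g \in E$, splitting $F$ and $gF$ along the matching gives
\[
\biggl|\sum_{x \in F} f(x) - \sum_{y \in gF} f(y)\biggr| \leq \sum_{x \in D_g} |f(x) - f(\phi_g(x))| + \biggl|\sum_{x \in F \setminus D_g} f(x) - \sum_{y \in gF \setminus \phi_g(D_g)} f(y)\biggr|.
\]
The first term is $\leq \eta|F|$; the two non-negative sums in the second absolute value are each bounded by $(1-\theta)|F|$, so their difference is also $\leq (1-\theta)|F|$. Dividing by $|F|$ yields $|\delta_F(f) - (g\delta_F)(f)| \leq \eta + (1-\theta) = \epsilon < 1$, so $a \defeq \delta_F$ witnesses Theorem~\ref{theorem:better.day}(3) and $G$ is amenable.

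\textbf{Main obstacles.} The subtle point in (1)$\Rightarrow$(2) is that $p_d$ is not left-invariant; without the auxiliary pseudo-metric $d'$ controlling left translations by elements of $E$ one cannot transfer the almost invariance of $a$ to its finite-set approximant $\delta_F$. The subtle point in (3)$\Rightarrow$(1) is that hypothesis~(3) supplies only a single $\theta \in (0, 1)$, perhaps very close to $0$, ruling out any attempt to make the matching defect arbitrarily small; this is precisely why the weaker $[0,1]$-valued, tolerance-$\epsilon < 1$ criterion of Theorem~\ref{theorem:better.day}(3) — rather than Theorem~\ref{theorem:topological.day} itself — is the natural amenability criterion to target.
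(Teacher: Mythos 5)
Your proof is correct and follows essentially the same route as the paper's: Theorem~\ref{theorem:topological.day} plus Lemma~\ref{lemma:density} and the quantitative Hall theorem for (1)$\Rightarrow$(2) (with the same auxiliary pseudo-metric $d'$ and the same $\frac{1-\theta}{3}$ bookkeeping), and verification of Theorem~\ref{theorem:better.day}(3) with $\epsilon = 1-\frac{\theta}{2}$ for (3)$\Rightarrow$(1). The only cosmetic differences are that you invoke the metrization lemma to place a unit $d$-ball inside $U$ where the paper uses Urysohn's lemma to build a bump function supported in $U$, and you run the Hall step by contradiction with the test function $\min(1,d(\cdot,S))$ where the paper bounds $|S|-|N_{\mathscr{B}}(S)|$ directly using $\bigvee_{y \in S} f(\cdot\,y^{-1})$.
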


\begin{proof} (2)$\Longrightarrow$(3). This is obvious.
	
(1)$\Longrightarrow$(2). Since the discrete case is covered by Theorem~\ref{theorem:folner} already, we will insist on $G$ being non-discrete. Let $\theta \in (0,1)$, $E \in \mathscr{F}(G)$ and $U \in \mathscr{U}_{e}(G)$. Without loss of generality, we may assume that $e \in E$. Due to Urysohn's lemma for uniform spaces, there exists a right-uniformly continuous function $f \colon G \to [0,1]$ such that $\spt (f) \subseteq U$ and $f(e) = 1$. Consider the continuous right-invariant pseudo-metrics $d$ and $d'$ on $G$ given by \begin{displaymath}
	d(x,y) \defeq \sup_{g \in G} \vert f(xg) - f(yg) \vert , \qquad d'(x,y) \defeq \sup_{g \in E} d(gx,gy) \qquad (x,y \in G) .
\end{displaymath} By Theorem~\ref{theorem:topological.day}, there is $a \in \mathbb{R}G$ with $\Vert a \Vert_{1} = 1$, $a \geq 0$, and $p_{d}(a-ga) \leq \frac{1-\theta}{3}$ for all~$g \in E$. Since $G$ is not discrete, it constitutes a perfect Hausdorff space. Due to Lemma~\ref{lemma:density}, there thus exists a finite non-empty subset $F \subseteq G$ such that $p_{d'}(a - \delta_{F}) \leq \frac{1-\theta}{3}$. It follows that \begin{displaymath}
	p_{d}(\delta_{F} - \delta_{gF}) \leq p_{d}(\delta_{F} - a) + p_{d}(a-ga) + p_{d}(ga-g\delta_{F}) \leq p_{d}(a-ga) + 2p_{d'}(a-\delta_{F}) \leq 1-\theta
\end{displaymath} for all $g \in E$. We show that $\mu (F,gF,U) \geq \theta |F|$ for every $g \in E$. So, let $g \in E$ and consider the bipartite graph $\mathscr{B} \defeq \mathscr{B}(F,gF,U)$. Let $S \subseteq F$. Since $d$ is right-invariant, $f \circ \rho_{h} \colon (G,d) \to [0,1]$ is $1$-Lipschitz continuous for every $h \in G$. Hence, the function $f_{S} \colon (G,d) \to [0,1]$ given by \begin{displaymath}
	f_{S}(x) \defeq \bigvee_{y \in S} f(xy^{-1}) \qquad (x \in G)
\end{displaymath} is $1$-Lipschitz continuous, too. Therefore, $\vert \delta_{F}(f_{S}) - \delta_{gF}(f_{S}) \vert \leq 1 - \theta$. Also note that $f_{S}\vert_{S} = 1$. As $\spt (f) \subseteq U$, it now follows that \begin{align*}
	|S| &\leq \vert F \vert \delta_{F}(f_{S}) \leq (1 - \theta)\vert F \vert + \vert F \vert \delta_{gF}(f_{S}) = (1 - \theta)\vert F \vert + \sum_{x \in gF} f_{S}(x) \\
	&= (1 - \theta)\vert F \vert + \sum_{x \in N_{\mathscr{B}}(S)} f_{S}(x) \leq (1-\theta) \vert F \vert + \vert N_{\mathscr{B}}(S) \vert ,
\end{align*} that is, $|S| - \left|N_{\mathscr{B}}(S)\right| \leq (1-\theta ) \vert F\vert$. Applying Theorem~\ref{theorem:hall}, we arrive at \begin{align*}
	\frac{\mu (F,gF,U)}{|F|} &= \frac{|F|-\sup_{S \subseteq F} \left( |S| - \left| N_{\mathscr{B}}(S) \right|\right)}{|F|} \geq \frac{|F|-(1-\theta )|F|}{|F|} = \theta .
\end{align*}

(3)$\Longrightarrow$(1). We are going to prove that if (2) holds for some $\theta \in (0,1)$, then the third condition of Theorem~\ref{theorem:better.day} is satisfied for $\epsilon \defeq 1 - \frac{\theta}{2}$. Consider a finite subset $E \subseteq G$ and a right-invariant continuous pseudo-metric $d$ on $G$. Then $U \defeq \{ x \in G \mid d(x,e) \leq \tfrac{1}{2} \}$ is an identity neighborhood on $G$. According to~(2), there exists a finite non-empty subset $F \subseteq G$ such that $\mu (F,gF,U) \geq \theta \vert F \vert$ for all $g \in E$. We show that $p_{d}(\delta_{F}-g\delta_{F}) \leq \epsilon$ for every $g \in E$. To this end, let $g \in E$ and consider an injective map $\phi \colon D \to gF$ with $D \subseteq F$, $|D| = \mu (F,gF,U)$, and $\phi (x) \in Ux$ for all $x \in D$. Let $\bar{\phi} \colon F \to gF$ be any bijection with $\bar{\phi}\vert_{D} = \phi$. Now, if $f \colon (G,d) \to [0,1]$ is $1$-Lipschitz continuous, then \begin{align*}
	|\delta_{F}(f) - (g\delta_{F})(f)| &= \frac{1}{|F|}\left|\sum_{x \in F} f(x) - \sum_{x \in F} f(gx)\right| \\
		&= \frac{1}{|F|}\left|\sum_{x \in D} (f(x)-f(\bar{\phi} (x))) + \sum_{x \in F\setminus D} (f(x) - f(\bar{\phi} (x))\right| \\
		&\leq \frac{1}{|F|} \sum_{x \in D} |f(x)-f(\phi (x))| + \frac{1}{|F|} \sum_{x \in F\setminus D} \vert f(x) - f(\bar{\phi}(x)) \vert \\
		&\leq \frac{\vert D \vert}{2\vert F \vert} + \frac{\vert F\vert - \vert D \vert}{|F|} \\
		&\leq 1 - \frac{\theta}{2} = \epsilon .\qedhere
\end{align*} \end{proof}

\begin{remark}\label{remark:topological.folner} Let $G$ be a Hausdorff topological group. According to Theorem~\ref{theorem:topological.folner}, $G$ is amenable if and only if there exists a family of finite non-empty subsets $(F_{i})_{i \in I}$ of $G$ along with a (non-principal) ultrafilter $\mathscr{U}$ on $I$ such that \begin{displaymath}
	\forall g\in G \, \forall U \in \mathscr{U}_{e}(G) \colon \quad \lim_{i \to \mathscr{U}} \frac{\mu (F_{i},gF_{i},U)}{\vert F_{i} \vert} = 1 ,
\end{displaymath} which by the estimate in the proof of the implication (3)$\Longrightarrow$(1) of Theorem~\ref{theorem:topological.folner} implies that \begin{displaymath}
	\forall g \in G \, \forall f \in \mathrm{RUC}_{b}(G) \colon \quad \lim_{i \to \mathscr{U}} \delta_{F_{i}}(f-f \circ \lambda_{g}) = 0 ,
\end{displaymath} which in turn implies amenability of $G$ by the argument used to prove the implication ($\Longleftarrow$) of Theorem~\ref{theorem:topological.day}. This provides us with another (a priori weaker) characterization of amenability, which had been observed in~\cite{SchneiderThom} already. \end{remark}

Let us mention the following rather direct consequence of Theorem~\ref{theorem:topological.folner} concerning matchings with respect to pairs of translates.

\begin{cor} A Hausdorff topological group $G$ is amenable if and only if, for every $\theta \in (0,1)$, every finite subset $E \subseteq G$ and every $U \in \mathscr{U}_{e}(G)$, there exists a finite non-empty subset $F \subseteq G$ such that \begin{displaymath}
	\forall g,h \in E \colon \quad \mu (gF,hF,U) \geq \theta \vert F \vert .
\end{displaymath} \end{cor}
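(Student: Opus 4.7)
The backward implication is essentially free: if the condition holds with a fixed $\theta \in (0,1)$, taking any finite $E \subseteq G$ and enlarging it to $E \cup \{e\}$, the condition applied at the pair $(e,g)$ yields $\mu(F,gF,U) = \mu(eF, gF, U) \geq \theta |F|$ for all $g \in E$, which is exactly condition~(2) of Theorem~\ref{theorem:topological.folner}, so $G$ is amenable.

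For the forward implication, the strategy is a ``composition of matchings'' argument. Given $\theta \in (0,1)$, $E \in \mathscr{F}(G)$, and $U \in \mathscr{U}_e(G)$, I would set $\theta' \defeq (1+\theta)/2 \in (\theta,1)$ and use continuity of multiplication to choose $U_{1} \in \mathscr{U}_{e}(G)$ with $U_{1} U_{1}^{-1} \subseteq U$. Applying Theorem~\ref{theorem:topological.folner} to the data $(\theta', E, U_{1})$ produces a finite non-empty $F \subseteq G$ such that for every $g \in E$ there is an injection $\phi_{g} \colon D_{g} \to gF$ with $D_{g} \subseteq F$, $|D_{g}| \geq \theta' |F|$, and $\phi_{g}(x) x^{-1} \in U_{1}$ for all $x \in D_{g}$.

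The main step is then to combine any pair $(\phi_{g},\phi_{h})$ for $g,h \in E$ into a matching between $gF$ and $hF$ in $\mathscr{B}(gF,hF,U)$. The natural candidate is
\[
\psi \colon \phi_{g}(D_{g} \cap D_{h}) \to hF, \qquad \phi_{g}(x) \mapsto \phi_{h}(x).
\]
Injectivity of $\phi_{g}$ and $\phi_{h}$ makes $\psi$ well-defined and injective. For the edge condition, if $\phi_{g}(x) = u_{g} x$ and $\phi_{h}(x) = u_{h} x$ with $u_{g}, u_{h} \in U_{1}$, then $\psi(\phi_{g}(x)) \phi_{g}(x)^{-1} = u_{h} u_{g}^{-1} \in U_{1} U_{1}^{-1} \subseteq U$, as required. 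A standard inclusion–exclusion bound gives $|D_{g} \cap D_{h}| \geq |D_{g}| + |D_{h}| - |F| \geq (2\theta' - 1)|F| = \theta |F|$, so $\mu(gF, hF, U) \geq \theta |F|$.

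The only real obstacle is that $U$ itself need not be symmetric nor conjugation-invariant, so one cannot simply transport the matching $\phi_{g^{-1}h}$ by left translation (that would produce edges in $gUg^{-1}$, not in $U$). The choice $U_{1} U_{1}^{-1} \subseteq U$ and the idea of routing matchings through a common ``witness'' set $F$ (rather than composing translates) both circumvent this obstruction in one stroke.
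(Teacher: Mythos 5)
Your proof is correct, but the forward implication takes a genuinely different route from the paper's. The paper applies Theorem~\ref{theorem:topological.folner} to the enlarged set $E^{-1}E$ and the conjugated neighbourhood $V = \bigcap_{g \in E} g^{-1}Ug$, obtaining a matching $\phi\colon D \to g^{-1}hF$ in $\mathscr{B}(F, g^{-1}hF, V)$ of size $\geq \theta|F|$, and then transports it by left translation, $x \mapsto g\phi(g^{-1}x)$; the conjugation obstruction you identify at the end is exactly what the intersection $\bigcap_{g\in E} g^{-1}Ug$ is there to absorb, since the transported edges land in $gVg^{-1} \subseteq U$. You instead keep $E$ and work with the original base set $F$ as a common hub, composing $\phi_h \circ \phi_g^{-1}$ on $D_g \cap D_h$, which trades the conjugation issue for a symmetrization $U_1U_1^{-1} \subseteq U$ and costs a factor in the F\o lner constant (you need $\theta' = (1+\theta)/2$ and the inclusion--exclusion bound $|D_g \cap D_h| \geq (2\theta'-1)|F|$). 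Since Theorem~\ref{theorem:topological.folner} supplies the condition for arbitrary $\theta$, $E$, and $U$, both losses are harmless; the paper's version preserves the constant $\theta$ exactly and needs only one matching per pair, while yours avoids conjugating the neighbourhood altogether and only ever invokes the F\o lner condition for $E$ itself rather than $E^{-1}E$. Both arguments are complete and correct.
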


\begin{proof} ($\Longleftarrow$) This is due to Theorem~\ref{theorem:topological.folner}.

($\Longrightarrow$) Assume that $G$ is amenable. Let $\theta \in (0,1)$, $U \in \mathscr{U}_{e}(G)$, and consider a finite subset $E \subseteq G$. Then $V \defeq \bigcap_{g \in E} g^{-1}Ug$ is an identity neighborhood in $G$, too. Hence, Theorem~\ref{theorem:topological.folner} asserts the existence of a finite non-empty subset $F \subseteq G$ such that $\mu (F,gF,V) \geq \theta \vert F \vert$ for all $g \in E^{-1}E$. We prove that $\mu (gF,hF,U) \geq \theta \vert F \vert$ for all $g,h \in E$. To this end, let $g,h \in E$. Consider an injective map $\phi \colon D \to g^{-1}hF$ such that $D \subseteq F$, $|D| = \mu (F,g^{-1}hF,V)$, and $\phi (x) \in Vx$ for all $x \in D$. Define $D' \defeq gD$. Clearly, \begin{displaymath}
	\psi \colon D' \to hF, \quad x \mapsto g\phi (g^{-1}x) 
\end{displaymath} is injective, and $\psi (x) = g\phi (g^{-1}x) \in gVg^{-1}x \subseteq Ux$ for every $x \in D'$. Hence, \begin{displaymath}
	\mu (gF,hF,U) \geq \vert D' \vert = \vert D \vert = \mu (F,g^{-1}hF,V) \geq \theta \vert F \vert . \qedhere
\end{displaymath} \end{proof}

We conclude this section with a first application of Theorem~\ref{theorem:topological.folner}. It was asked in~\cite{BarrosoMbomboPestov} to what extent amenability of topological groups can be characterized by the existence of approximate fixed points -- or equivalently, almost invariant vectors -- for their affine continuous actions on bounded (but not necessarily compact) convex subsets of locally convex spaces. An action of a group $G$ by affine homeomorphisms on a convex subset $C$ of some topological vector space is said to have \emph{approximate fixed points}~\cite{BarrosoMbomboPestov} if there is a net $(x_{\iota})_{\iota \in I}$ in $C$ such that $x_{\iota} - gx_{\iota} \to 0$ for every $g \in G$. We resolve the question raised in~\cite{BarrosoMbomboPestov} as follows, where an action of a topological group $G$ by isomorphisms on a uniform space $X$ will be called \emph{bounded}~\cite{pestovbook} if, for every entourage $\alpha$ of $X$, there exists $U \in \mathscr{U}_{e}(G)$ such that \begin{displaymath}
	\forall g \in U \, \forall x \in X \colon \quad (x,gx) \in \alpha . 
\end{displaymath} 

\begin{cor} Let $G$ be a Hausdorff topological group. The following are equivalent. \begin{enumerate}
	\item[$(1)$] $G$ is amenable.
	\item[$(2)$] Every bounded action of $G$ by affine homeomorphisms on a bounded convex subset of a locally convex space has approximate fixed points.
\end{enumerate} \end{cor}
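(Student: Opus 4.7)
The plan is to prove $(1) \Rightarrow (2)$ by averaging a base point over a F\o lner set produced by Theorem~\ref{theorem:topological.folner}, and to prove $(2) \Rightarrow (1)$ through Rickert's fixed point criterion.

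For the forward direction, fix a bounded action of $G$ by affine homeomorphisms on a bounded convex subset $C$ of a locally convex space $V$, and fix any base point $x_{0} \in C$. Given a continuous seminorm $p$ on $V$, a finite $E \subseteq G$, and $\epsilon > 0$, set $M \defeq \sup_{y \in C} p(y)$, which is finite because $C$ is bounded. Using boundedness of the action, choose $U \in \mathscr{U}_{e}(G)$ with $p(hy - y) < \tfrac{\epsilon}{2}$ for all $h \in U$ and $y \in C$. By Theorem~\ref{theorem:topological.folner}, pick a finite non-empty $F \subseteq G$ such that $\mu(F, gF, U) \geq \theta \vert F \vert$ for every $g \in E$, where $\theta \in (0,1)$ is chosen close enough to $1$ that $2M(1-\theta) \leq \tfrac{\epsilon}{2}$. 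Put $x_{F} \defeq \tfrac{1}{\vert F \vert} \sum_{y \in F} y x_{0} \in C$. For each $g \in E$, let $\phi \colon D \to gF$ be an injective matching realising $\mu(F,gF,U)$ with $\phi(y) \in Uy$, and extend it to a bijection $\bar{\phi} \colon F \to gF$ (possible since $\vert F \vert = \vert gF \vert$). Because the action is affine,
$$g x_{F} - x_{F} \;=\; \frac{1}{\vert F \vert} \sum_{y \in F} \bigl( \bar{\phi}(y) x_{0} - y x_{0} \bigr).$$
For $y \in D$, writing $\bar{\phi}(y) = uy$ with $u \in U$, boundedness of the action gives $p(\bar{\phi}(y) x_{0} - y x_{0}) = p(u(yx_{0}) - yx_{0}) < \tfrac{\epsilon}{2}$; for $y \in F \setminus D$, the triangle inequality yields the crude bound $2M$. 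Summing, $p(g x_{F} - x_{F}) \leq \tfrac{\epsilon}{2} + 2M(1 - \theta) \leq \epsilon$. Letting $(E, p, \epsilon)$ range over its natural directed set, the net $(x_{F})$ provides the required approximate fixed points.

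For the converse, by Rickert's theorem it suffices to exhibit a fixed point for every continuous $G$-action by affine homeomorphisms on a non-void compact convex subset $K$ of a locally convex space. Such an action is automatically bounded: given an entourage of $K$ of the form determined by a continuous seminorm $p$ and $\delta > 0$, joint continuity of the action map $G \times K \to K$ combined with compactness of $K$ yields $U \in \mathscr{U}_{e}(G)$ with $p(gx - x) < \delta$ uniformly for $g \in U$ and $x \in K$. By hypothesis, there is a net $(x_{\iota})$ in $K$ with $g x_{\iota} - x_{\iota} \to 0$ for every $g \in G$; passing to a subnet convergent to some $x \in K$ by compactness, we obtain $gx = x$ for every $g \in G$, as required.

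The main obstacle is the averaging step in $(1) \Rightarrow (2)$: the estimate must simultaneously control the matched part via boundedness of the action and the unmatched part via boundedness of the set $C$, which is precisely the point at which the assumption that $C$ be bounded (and not merely that the action be bounded) is used. Once the quantitative trade-off between the matching quality $\theta$ and the diameter $M$ is set up correctly, the remainder is bookkeeping; the converse direction is routine modulo the standard observation that continuous actions on compact spaces are bounded.
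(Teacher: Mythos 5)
Your proposal is correct and follows essentially the same route as the paper: averaging a base point over F\o lner sets from Theorem~\ref{theorem:topological.folner}, splitting $gx_F - x_F$ into a matched part controlled by boundedness of the action and an unmatched part controlled by boundedness of $C$, and deducing the converse from Rickert's theorem plus the fact that continuous actions on compact sets are bounded. The only difference is cosmetic: you phrase the estimate with continuous seminorms and an $\tfrac{\epsilon}{2}+\tfrac{\epsilon}{2}$ split, while the paper uses balanced convex zero neighborhoods $V$ with $V+V\subseteq N$ and an absorption constant $\alpha$ with $C-C\subseteq\alpha V$.
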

	
\begin{proof} The implication from~(2) to~(1) is an immediate consequence of Rickert's fixed point theorem~\cite[Theorem~4.2]{rickert} and the fact that continuous actions of topological groups on compact spaces are bounded. To prove the converse, consider any bounded action of $G$ by affine homeomorphisms on a bounded convex subset $C$ of a locally convex space $X$. Fix some $x \in C$. By Theorem~\ref{theorem:topological.folner}, there is a net $(F_{\iota})_{\iota \in I}$ of finite non-empty subsets of $G$ such that \begin{displaymath}
	\forall g \in G \, \forall U \in \mathscr{U}_{e}(G) \colon \quad \frac{\mu (F_{\iota},gF_{\iota},U)}{\vert F_{\iota} \vert} \to 1 .
\end{displaymath} For each $\iota \in I$, let us consider \begin{displaymath}
	x_{\iota} \defeq \frac{1}{\vert F_{\iota} \vert}\sum_{h \in F_{\iota}} hx \in C .
\end{displaymath} We claim that $x_{\iota} - gx_{\iota} \to 0$ for every $g \in G$. To see this, let $g \in G$ and let $N$ be a zero neighborhood in $X$. Since $X$ is locally convex, there is a balanced, convex zero neighborhood $V$ in $X$ such that $V + V \subseteq N$. As $C$ is bounded, there exists $\alpha \in \mathbb{K}$ such that $C-C \subseteq \alpha V$. Since $G$ acts on $C$ in a bounded manner, we find some $U \in \mathscr{U}_{e}(G)$ so that $hx \in V+x$ for all $x \in C$ and $h \in U$. By our hypothesis about $(F_{\iota})_{\iota \in I}$, there exists $\iota_{0} \in I$ such that \begin{displaymath}
	\forall \iota \in I , \, \iota \geq \iota_{0} \colon \quad \frac{\mu (F_{\iota},gF_{\iota},U)}{\vert F_{\iota} \vert} \geq 1 - \vert \alpha \vert^{-1} .
\end{displaymath} We show that $x_{\iota} - gx_{\iota} \in V$ for all $\iota \in I$ with $\iota \geq \iota_{0}$. So, let $\iota \in I$, $\iota \geq \iota_{0}$. Due to the above, there exists a subset $D_{\iota} \subseteq F_{\iota}$ along with a bijection $\phi_{\iota} \colon F_{\iota} \to gF_{\iota}$ such that $\phi_{\iota}(h) \in Uh$ for each $h \in D_{\iota}$ and moreover \begin{displaymath}
	\frac{\vert D_{\iota} \vert}{\vert F_{\iota} \vert} \geq 1 - \vert \alpha \vert^{-1} , \qquad \text{i.e.} \qquad \frac{\vert F_{\iota} \setminus D_{\iota} \vert}{\vert F_{\iota} \vert} \vert \alpha \vert \leq 1 .
\end{displaymath} It follows that \begin{align*}
	x_{\iota} & - gx_{\iota} \, = \, \frac{1}{|F_{\iota}|} \left( \sum_{h \in F_{\iota}} hx - \sum_{h \in F_{\iota}} ghx \right) \\
		&= \, \frac{1}{|F_{\iota}|}\left( \sum_{h \in D_{\iota}} hx-\phi_{\iota}(h)x + \sum_{h \in F_{\iota}\setminus D_{\iota}} hx - \phi_{\iota} (h) x \right) \\
		&= \, \frac{\vert D_{\iota} \vert}{\vert F_{\iota} \vert} \left( \frac{1}{|D_{\iota}|} \sum_{h \in D_{\iota}} hx-\phi_{\iota}(h)x \right) + \frac{\vert F_{\iota} \setminus D_{\iota} \vert}{\vert F_{\iota} \vert} \left( \frac{1}{\vert F_{\iota} \setminus D_{\iota} \vert} \sum_{h \in F_{\iota}\setminus D_{\iota}} hx - \phi_{\iota} (h) x \right) \\
		&\in \, \frac{\vert D_{\iota} \vert}{\vert F_{\iota} \vert}V + \frac{\vert F_{\iota} \setminus D_{\iota} \vert}{\vert F_{\iota} \vert} (C-C) \, \subseteq \, V + \frac{\vert F_{\iota} \setminus D_{\iota} \vert}{\vert F_{\iota} \vert} \alpha V \, \subseteq \, V + V \, \subseteq \, N ,
\end{align*} which completes the argument. \end{proof}

\section{Perturbed translations and the topological von Neumann problem}\label{section:whyte}

In this section we give an application of the results obtained in Section~\ref{section:folner}: we prove that a topological group is amenable if and only if its left translation action is uniformly approximable by amenable actions (Theorem~\ref{theorem:approximating.actions}). This result in turn yields a characterization of non-amenability in terms of paradoxical decompositions (Remark~\ref{remark:paradoxical}), provides a topological solution to the von Neumann problem (Corollary~\ref{corollary:von.neumann}), and even allows for measurable variations concerning Polish groups (Corollaries~\ref{corollary:baire.paradoxical} and~\ref{corollary:baire.free}).

We start off by clarifying some terminology. We say that an action $(G,X)$ of a group $G$ on a non-empty set $X$ is \emph{amenable} if $\ell^{\infty}(X)$ admits a $G$-invariant mean, i.e., there is a positive linear map $\mu \colon \ell^{\infty}(X) \to \mathbb{R}$ with $\mu (\mathbf{1}) = 1$ and $\mu (f \circ g) = \mu (f)$ for all $f \in \ell^{\infty}(X)$ and $g \in G$. By Tarski's alternative (cf.~\cite{ParadoxicalDecompositions}), a group action $(G,X)$ is non-amenable if and only if $X$ admits a \emph{$G$-paradoxical decomposition}, that is, there exist $g_{1},\ldots,g_{m},h_{1},\ldots,h_{n} \in G$ and partitions $X = \bigcupdot_{i=1}^{m} A_{i} = \bigcupdot_{j=1}^{n} B_{j}$ such that $X = \bigcupdot_{i=1}^{m} g_{i}A_{i} \cupdot \bigcupdot_{j=1}^{n} h_{j}B_{j}$. Furthermore, F\o lner's classical amenability criterion~\cite{folner} generalizes to group actions as follows.

\begin{thm}[Rosenblatt~\cite{rosenblatt}]\label{theorem:amenable.actions} A group action $(G,X)$ is amenable if and only if, for every $\theta > 1$ and every finite subset $E \subseteq G$, there exists a finite non-empty subset $F \subseteq X$ such that $\vert EF \vert \leq \theta \vert F \vert$. \end{thm}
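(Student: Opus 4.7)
The plan is to mirror the classical proof of F\o lner's theorem due to Namioka, replacing the regular representation with the induced representation on $\ell^{1}(X)$. The argument splits into the two standard directions.

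\textbf{Sufficiency of the F\o lner condition.} Assume the combinatorial condition holds. For each finite $E \subseteq G$ with $e \in E$ and each $\theta \in (1,2)$ I would pick a witness $F = F_{E,\theta}$ and set $\mu_{E,\theta}(f) \defeq \frac{1}{|F|}\sum_{x \in F} f(x) \in \ell^{\infty}(X)'$. These are means, and the unit ball of $\ell^{\infty}(X)'$ is weak-$\ast$ compact, so any weak-$\ast$ cluster point $\mu$ along the directed set of pairs $(E,\theta)$ (ordered by $E \subseteq E'$ and $\theta \geq \theta'$) is a mean on $\ell^{\infty}(X)$. To see $\mu$ is $G$-invariant, fix $g \in G$ and $f \in \ell^{\infty}(X)$ with $\Vert f \Vert_{\infty} \leq 1$; for every $(E,\theta)$ with $g \in E$,
\[
	|\mu_{E,\theta}(f) - \mu_{E,\theta}(f \circ g)| \leq \tfrac{|F \triangle gF|}{|F|} \leq \tfrac{2(|F| - |F \cap gF|)}{|F|} \leq \tfrac{2(|EF| - |F|)}{|F|} \leq 2(\theta - 1) ,
\]
using that $|F \cap gF| \geq |F| + |gF| - |EF| \geq (2-\theta)|F|$. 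Passing to the limit yields $\mu(f) = \mu(f \circ g)$.

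\textbf{Necessity.} Assume $(G,X)$ is amenable. The first step is the analog of Day's theorem: for every finite $E \subseteq G$ and every $\epsilon > 0$, there exists $a \in \ell^{1}(X)$ with $a \geq 0$, $\Vert a \Vert_{1} = 1$, and $\Vert a - ga \Vert_{1} \leq \epsilon$ for all $g \in E$. This is an exact replica of the Hahn--Banach / separation argument used in the proof of Theorem~\ref{theorem:topological.day}: if no such $a$ existed, then $0$ would be outside the closed convex hull of $\{(a - ga)_{g \in E} : a \text{ positive, normalized}\}$ in $\ell^{1}(X)^{E}$, so one could find $(f_{g})_{g \in E} \in \ell^{\infty}(X)^{E}$ with $\sum_{g \in E}(f_{g} - f_{g}\circ g) \geq 1$, contradicting the existence of a $G$-invariant mean.

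The second step is the layer-cake conversion. For $t > 0$ set $F_{t} \defeq \{x \in X : a(x) > t\}$. Since $a \geq 0$ and $(ga)(x) = a(g^{-1}x)$, the sets satisfy $\{(ga) > t\} = gF_{t}$; Fubini gives
\[
	\Vert a - ga \Vert_{1} = \int_{0}^{\infty} |F_{t} \triangle gF_{t}|\, dt = 2\int_{0}^{\infty} |gF_{t} \setminus F_{t}|\, dt, \qquad \Vert a \Vert_{1} = \int_{0}^{\infty} |F_{t}|\, dt .
\]
Fix $\theta > 1$, assume $e \in E$, and choose $\epsilon < 2(\theta - 1)/|E|$ above. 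Integrating the inequality $\sum_{g \in E} \Vert a - ga \Vert_{1} \leq |E|\epsilon$ and comparing to $\Vert a \Vert_{1} = 1$ yields
\[
	\int_{0}^{\infty} \Bigl( \sum_{g \in E} |gF_{t} \setminus F_{t}| - (\theta - 1)|F_{t}| \Bigr) dt \leq \tfrac{|E|\epsilon}{2} - (\theta - 1) < 0 ,
\]
so there must exist some $t > 0$ for which $F \defeq F_{t}$ is non-empty and $\sum_{g \in E} |gF \setminus F| \leq (\theta - 1)|F|$. Since $EF \subseteq F \cup \bigcup_{g \in E} gF$ (using $e \in E$), this gives $|EF| \leq \theta|F|$, as required.

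\textbf{Main obstacle.} The only non-routine step is the Day-type approximation lemma in $\ell^{1}(X)$, but it is essentially identical to the separation argument already carried out in Theorem~\ref{theorem:topological.day} (indeed simpler, as no pseudo-metric is needed, only the norm topology on $\ell^{1}(X)$). The averaging / layer-cake extraction of a level set is a standard trick and presents no real difficulty.
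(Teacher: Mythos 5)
The paper does not prove this statement at all --- it is quoted from Rosenblatt with a citation --- so there is no in-paper argument to compare against. Your proof is correct and is the standard Day--Namioka argument transported from the left regular representation to $\ell^{1}(X)$: the weak-$\ast$ cluster-point construction for sufficiency (with the estimate $|F \cap gF| \geq 2|F| - |EF|$ giving the bound $2(\theta-1)$), the Hahn--Banach separation argument for the Day-type approximation in $\ell^{1}(X)$, and the layer-cake extraction of a level set $F_{t}$ with $\sum_{g \in E}|gF_{t}\setminus F_{t}| \leq (\theta-1)|F_{t}|$ all go through as you describe.
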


\begin{remark}\label{remark:amenable.actions} Let $G$ be a group. If $S \subseteq G$ generates $G$, then Theorem~\ref{theorem:amenable.actions} (together with an obvious estimate) asserts the following: an action of $G$ on a set $X$ is amenable if and only if, for every $\theta > 1$ and every finite subset $E \subseteq S$, there exists a finite non-empty subset $F \subseteq X$ such that $\vert EF \vert \leq \theta \vert F \vert$. \end{remark}

For convenience, let us also introduce some very few bits of additional notation. Let $I$ and $X$ be sets. For a mapping $\alpha \colon I \to \mathrm{Sym}(X)$, we will denote by $\Gamma (\alpha)$ the subgroup of $\mathrm{Sym} (X)$ generated by the subset $\{ \alpha (i) \mid i \in I \}$. Given a family of maps $\alpha_{j} \colon I \to \mathrm{Sym}(X)$ $(j \in J)$, we define $\bigoplus_{j \in J} \alpha_{j} \colon I \to \mathrm{Sym}(X \times J)$ by setting \begin{displaymath}
	\left( \bigoplus\nolimits_{j \in J} \alpha_{j} \right)(i)(x,j) \defeq (\alpha_{j}(i)(x),j) \qquad (i \in I, \, j \in J, \, x \in X) .
\end{displaymath}

Now we turn towards topological groups and their perturbed left translations. Let $G$ be a topological group. We will consider the induced topology of uniform convergence on $\mathrm{Sym}(G)^{G}$ viewed as a subspace of $(G^{G})^{G} \cong G^{G \times G}$, where $G$ is carrying its right uniformity. It is not difficult to see that the sets of the form \begin{displaymath}
	\left. \mathscr{N}_{\alpha}(U) \defeq \left\{ \beta \in \mathrm{Sym}(G)^{G} \, \right\vert \forall g,h \in G \colon \, \beta (g)(h) \in U\alpha (g)(h) \right\} \qquad (U \in \mathscr{U}_{e}(G) )
\end{displaymath} constitute a neighborhood basis at any $\alpha \in \mathrm{Sym}(G)^{G}$ with respect to the considered topology. Being particularly interested in the map $\lambda \colon G \to \mathrm{Sym}(G), \, g \mapsto \lambda_{g}$, we let $\mathscr{N}_{G}(U) \defeq \mathscr{N}_{\lambda}(U)$ for every $U \in \mathscr{U}_{e}(G)$. Let $U \in \mathscr{U}_{e}(G)$. Given~$\alpha \in \mathscr{N}_{G}(U)$, the group $\Gamma (\alpha)$ may be viewed as an approximation of $G$ up to the neighborhood $U$. Extending this idea, one may think of \begin{displaymath}
	\Gamma_{U}(G) \, \defeq \, \Gamma \! \left(\bigoplus\nolimits_{\alpha \in \mathscr{N}_{G}(U)} \alpha \right) \, \leq \, \mathrm{Sym}(G \times \mathscr{N}_{G}(U))
\end{displaymath} as the $U$-perturbation of $G$. And indeed, as the following result reveals, those approximations do reflect the amenability of $G$ in a fairly natural manner.

\begin{thm}\label{theorem:approximating.actions} Let $G$ be a Hausdorff topological group. The following are equivalent. \begin{enumerate}
	\item[$(1)$] $G$ is amenable.
	\item[$(2)$] For every $U \in \mathscr{U}_{e}(G)$, there exists $\alpha \in \mathscr{N}_{G}(U)$ such that the action of $\Gamma (\alpha)$ on $G$ is amenable.
	\item[$(3)$] For every $U \in \mathscr{U}_{e}(G)$, the action of $\Gamma_{U}(G)$ on $G \times \mathscr{N}_{G}(U)$ is amenable.
\end{enumerate} \end{thm}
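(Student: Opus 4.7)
The plan is to establish the cycle $(2) \Rightarrow (3) \Rightarrow (1) \Rightarrow (2)$.

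For $(2) \Rightarrow (3)$, suppose $\alpha_{0} \in \mathscr{N}_{G}(U)$ admits a $\Gamma(\alpha_{0})$-invariant mean $\nu$ on $\ell^{\infty}(G)$, and define a mean $\tilde{\nu}$ on $\ell^{\infty}(G \times \mathscr{N}_{G}(U))$ by concentrating on the slice $G \times \{\alpha_{0}\}$, namely $\tilde{\nu}(f) \defeq \nu\bigl(x \mapsto f(x,\alpha_{0})\bigr)$. Every generator $\bigoplus_{\beta}\beta(g)$ of $\Gamma_{U}(G)$ stabilizes the slice $G \times \{\alpha_{0}\}$ and acts on it exactly as $\alpha_{0}(g) \in \Gamma(\alpha_{0})$ does on $G$, so the $\Gamma(\alpha_{0})$-invariance of $\nu$ transfers to $\Gamma_{U}(G)$-invariance of $\tilde{\nu}$.

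For $(3) \Rightarrow (1)$, for each $U \in \mathscr{U}_{e}(G)$ let $\tilde{\nu}_{U}$ be a $\Gamma_{U}(G)$-invariant mean on $\ell^{\infty}(G \times \mathscr{N}_{G}(U))$ and pull it back to a mean on $\mathrm{RUC}_{b}(G)$ via $\eta_{U}(f) \defeq \tilde{\nu}_{U}\bigl((x,\beta) \mapsto f(x)\bigr)$. Applying the invariance of $\tilde{\nu}_{U}$ to the generator indexed by $g \in G$ gives $\eta_{U}(f) = \tilde{\nu}_{U}\bigl((x,\beta) \mapsto f(\beta(g)(x))\bigr)$, and since $\beta(g)(x) \in U g x$ for every $\beta \in \mathscr{N}_{G}(U)$, right-uniform continuity of $f$ bounds $|\eta_{U}(f) - \eta_{U}(f \circ \lambda_{g})|$ by a quantity that vanishes as $U$ shrinks. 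An ultralimit of the $\eta_{U}$'s along an ultrafilter on $\mathscr{U}_{e}(G)$ dominating the filter base $\{V \in \mathscr{U}_{e}(G) : V \subseteq U\}_{U}$ then produces a left-invariant mean on $\mathrm{RUC}_{b}(G)$, so $G$ is amenable.

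For the main implication $(1) \Rightarrow (2)$ (assuming $G$ is infinite, the discrete case being reduced to F{\o}lner's theorem), fix $U \in \mathscr{U}_{e}(G)$ and a symmetric $V \in \mathscr{U}_{e}(G)$ with $V \subseteq U$. By Theorem~\ref{theorem:topological.folner}, for every finite $E \subseteq G$ and every $\epsilon \in (0,1)$ there is a finite $F \subseteq G$ with $\mu(F,gF,V) \geq (1-\epsilon)|F|$ for all $g \in E$; the corresponding matchings $\phi_{g} \colon D_{g} \to gF$ with $\phi_{g}(x) \in Vx$ yield, upon inverting and using $V = V^{-1}$, partial bijections $y \mapsto \phi_{g}^{-1}(gy)$ from $g^{-1}\phi_{g}(D_{g}) \subseteq F$ into $D_{g} \subseteq F$ whose images lie in $U g y$. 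I would then assemble a single $\alpha \in \mathscr{N}_{G}(U)$ by selecting a cofinal family of parameter pairs $(E_{\iota}, \epsilon_{\iota})$ together with pairwise disjoint F{\o}lner sets $F_{\iota} \subseteq G$ (constructed by transfinite induction, each $F_{\iota}$ being finite and $G$ infinite), setting $\alpha(g)$ equal to the matching-based rearrangement on each $F_{\iota}$ with $g \in E_{\iota}$ and extending to the remainder of $G$ through $\lambda_{g}$, after a Hall-theoretic correction on the symmetric differences $D_{g} \triangle \phi_{g}(D_{g})$ needed to preserve both global bijectivity of $\alpha(g) \in \mathrm{Sym}(G)$ and the pointwise constraint $\alpha(g)(h) \in U g h$. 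Each $F_{\iota}$ is then $\epsilon_{\iota}$-invariant under $\alpha(E_{\iota})$, and Rosenblatt's criterion (Theorem~\ref{theorem:amenable.actions}), applied to the generating set $\{\alpha(g) : g \in G\}$ of $\Gamma(\alpha)$ as in Remark~\ref{remark:amenable.actions}, certifies amenability of the $\Gamma(\alpha)$-action on $G$.

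The main obstacle is exactly this last combinatorial step: synthesizing a single $\alpha \in \mathscr{N}_{G}(U)$ that is a permutation of $G$ for every $g$, that lies pointwise within $U g h$ of $\lambda_{g}$, and that renders each chosen F{\o}lner piece almost invariant. Pairwise disjointness of the $F_{\iota}$'s decouples the local definitions, while the residual conflict on the symmetric differences $D_{g} \triangle \phi_{g}(D_{g})$ is itself a bipartite matching problem, solvable by Hall's theorem (Corollary~\ref{corollary:hall}) provided $V$ is chosen sufficiently small inside $U$ so that the neighborhood relation admits the required expansion property.
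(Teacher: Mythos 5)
Your cycle of implications matches the paper's, and two of the three legs are sound. The step (2)$\Rightarrow$(3) (concentrating the mean on the slice $G\times\{\alpha_{0}\}$) is exactly the paper's argument. Your (3)$\Rightarrow$(1) is correct but takes a genuinely different and arguably cleaner route: you pull the invariant mean back to $\mathrm{RUC}_{b}(G)$ and observe that the generator indexed by $g$ perturbs $\lambda_{g}$ only within $U$, so the defect of left-invariance is bounded by the modulus of uniform continuity of $f$ over $U$ and vanishes in an ultralimit over shrinking $U$; the paper instead extracts F\o lner sets for the action via Rosenblatt's theorem, converts them into matching estimates, and invokes Theorem~\ref{theorem:topological.folner}. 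Your version buys directness at no cost.

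The gap is in (1)$\Rightarrow$(2), precisely at the step you flag as the main obstacle. Having defined $\alpha(g)$ on a F\o lner piece by the partial bijection $y\mapsto\phi_{g}^{-1}(gy)$ and by $\lambda_{g}$ elsewhere, you must repair bijectivity on $D_{g}\mathbin{\triangle}\phi_{g}(D_{g})$ by a bijection from the collision set $\phi_{g}(D_{g})\setminus D_{g}$ onto the uncovered set $D_{g}\setminus\phi_{g}(D_{g})$ that moves each point within $U$ of where $\lambda_{g}$ would have sent its preimage. This is a perfect-matching problem for a bipartite graph of the form $\mathscr{B}(P,Q,U)$, and no choice of $V\subseteq U$ makes Hall's condition automatic: the partial injection $x\mapsto\phi_{g}(x)\in Vx$ can contain long chains $x_{0}\mapsto x_{1}\mapsto\cdots\mapsto x_{k}$ each step of which lies in $V$, so that the endpoint that must be matched back to $x_{0}$ sits in $V^{k}x_{0}$, far outside $Ux_{0}$ (already in $G=\mathbb{R}$ with $U$ a bounded interval this fails for every $V$). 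The paper avoids the correction altogether by a structurally different device: Lemma~\ref{lemma:approximation.by.moving.sets} perturbs the F\o lner set so that $F\cap gF=\emptyset$ for all $g\in E\setminus\{e\}$ (Lemma~\ref{lemma:nice.folner.sets}), whence $D_{g}$ and $\phi_{g}(D_{g})\subseteq gF$ are disjoint and the map swapping them is an involution $\psi(g)$; then $\alpha(g)\defeq\psi(g)\circ\lambda_{g}$ is automatically a permutation satisfying $\alpha(g)(h)\in Ugh$, with no Hall-theoretic repair needed. A secondary omission: for uncountable $G$ your transfinite selection of pairwise disjoint pieces needs the refined well-ordering of Lemma~\ref{lemma:well.ordering} (all initial segments of cardinality $<\vert G\vert$) to guarantee room for each new piece, and the pieces must be separated at the level of the sets $EF_{E,n}$, not merely the sets $F_{E,n}$, or else the translates $gF_{\iota}$ of distinct pieces can collide and the local definitions no longer decouple.
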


Our Theorem~\ref{theorem:approximating.actions} establishes a link between amenability of topological groups and amenability of group actions on sets: the equivalence of~(1) and~(2) means that a topological group $G$ is amenable if and only if the action of $G$ on itself by left translations can be uniformly approximated by amenable actions on the set $G$. Evidently, condition~(3) is a priori weaker than~(2), and therefore $\neg$(3) provides a comparably strong manifestation of non-amenability, as we are going to illustrate by the subsequent remark. Let us agree on the following notation: for a set $X$, let $F_{X}$ denote the free group over the base set $X$, and for a map $\phi \colon X \to G$ into a group $G$, denote by $\phi^{\ast} \colon F_{X} \to G$ the unique homomorphism with $\phi^{\ast}\vert_{X} = \phi$.

\begin{remark}\label{remark:paradoxical} Let $G$ be a non-amenable Hausdorff topological group. Appealing to condition~(3) in Theorem~\ref{theorem:approximating.actions}, we find $U \in \mathscr{U}_{e}(G)$ such that $G \times \mathscr{N}_{U}(G)$ admits a $\Gamma_{U}(G)$-paradoxical decomposition, i.e., there exist $g_{1},\ldots ,g_{m},h_{1},\ldots ,h_{n} \in F_{G}$ such that, for every $\alpha \in \mathscr{N}_{G}(U)$, there exist partitions $G = \bigcupdot_{i=1}^{m} A_{i} = \bigcupdot_{j=1}^{n} B_{j}$ with \begin{displaymath}
	G = \bigcupdot_{i=1}^{m} \alpha^{\ast}(g_{i})(A_{i}) \cupdot \bigcupdot_{j=1}^{n} \alpha^{\ast}(h_{j})(B_{j}) .
\end{displaymath} This is a uniform version of the statement that $G$ admits a $\Gamma (\alpha)$-paradoxical decomposition for every $\alpha \in \mathscr{N}_{G}(U)$, which would be the negation of condition~(2) in Theorem~\ref{theorem:approximating.actions}. It seems natural to study the associated \emph{Tarski numbers} (cf.~\cite{ParadoxicalDecompositions}), i.e., the minimal numbers of pieces in the occurring paradoxical decompositions. Considering for each $U \in \mathscr{U}_{e}(G)$ the Tarski number $\tau_{U}(G)$ of the $\Gamma_{U}(G)$-action on $G \times \mathscr{N}_{G}(U)$, one may define the \emph{topological Tarski number} of $G$ to be $\tau_{\mathrm{top}}(G) \defeq \inf \{ \tau_{U}(G) \mid U \in \mathscr{U}_{e}(G) \}$. Clearly, $\tau_{U}(G) \leq \tau_{V}(G)$~for any two $U,V \in \mathscr{U}_{e}(G)$ with $U \subseteq V$. Furthermore, the topological Tarski number of a non-amenable topological group is bounded from below by the Tarski number of the underlying (necessarily non-amenable) discrete group. Of course, the two quantities coincide for discrete non-amenable groups. \end{remark}

Let us briefly outline the proof of Theorem~\ref{theorem:approximating.actions}. Whereas the equivalence of~(1) and~(3) can be deduced merely from our Theorem~\ref{theorem:topological.folner}, and~(3) is an immediate consequence of~(2), the fact that~(1) implies~(2) requires a more slightly more complicated argument relying on the well-ordering theorem in the form of Lemma~\ref{lemma:well.ordering} and involving Zorn's lemma along with Lemma~\ref{lemma:nice.folner.sets} for a maximality argument. Our proof of Lemma~\ref{lemma:nice.folner.sets} will appeal to Theorem~\ref{theorem:topological.folner} as well as the following useful observation.

\begin{lem}\label{lemma:approximation.by.moving.sets} Let $G$ be a non-discrete Hausdorff topological group. If $U$ is an identity neighborhood in $G$ and $E,F \in \mathscr{F}(G)$, then there exists an injective map $\phi \colon F \to G$ such that $\phi (x) \in Ux$ for every $x \in F$ and $\phi (F) \cap g\phi (F) = \emptyset$ for every $g \in E\setminus \{ e \}$. \end{lem}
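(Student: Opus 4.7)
The plan is to construct $\phi$ by induction on an enumeration $F = \{x_{1}, \ldots, x_{n}\}$, picking $\phi(x_{k}) \in U x_{k}$ one element at a time while avoiding a manifestly finite set of forbidden values. No appeal to any external result is needed; everything reduces to the combinatorics of avoiding finitely many bad points in an infinite set.

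The one essential topological input I would record at the start is this: in a non-discrete Hausdorff topological group $G$, every identity neighborhood is infinite. Indeed, if some $V \in \mathscr{U}_{e}(G)$ were finite, the Hausdorff property would let us separate $e$ from every other point of $V$ by open sets whose intersection with $V$ then isolates $e$, forcing $\{e\}$ to be open and contradicting non-discreteness. Translating by $x_{k}$, the set $Ux_{k}$ is therefore infinite for each $k$.

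Assume inductively that $\phi(x_{1}), \ldots, \phi(x_{k-1})$ have been chosen with $\phi(x_{i}) \in U x_{i}$, pairwise distinct, and satisfying $\phi(x_{i}) \ne g\phi(x_{j})$ for all $i,j < k$ and all $g \in E \setminus \{e\}$. The new element $\phi(x_{k})$ must lie in $U x_{k}$, must avoid $\{\phi(x_{i}) \mid i < k\}$ for injectivity, and must avoid the set
\[
\bigcup_{g \in E \setminus \{e\},\, i < k} \bigl\{ g\phi(x_{i}),\; g^{-1}\phi(x_{i}) \bigr\}
\]
so as to rule out $\phi(x_{k}) = g\phi(x_{i})$ and $\phi(x_{i}) = g\phi(x_{k})$; the remaining case $\phi(x_{k}) = g\phi(x_{k})$ reduces to $g = e$ and is automatically excluded. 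The entire forbidden set has cardinality at most $(k-1)(1 + 2(|E|-1))$, so it is finite, while $Ux_{k}$ is infinite, and a valid $\phi(x_{k})$ exists.

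There is no real obstacle here: the only conceptual content is the infiniteness of $Ux_{k}$, which uses both hypotheses (non-discreteness and Hausdorff) in an essential way, after which the lemma follows by a straightforward finite induction.
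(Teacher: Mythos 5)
Your proof is correct and is essentially the paper's argument in a different guise: the paper formulates the same greedy extension as a maximality argument over partial injections $(D,\phi)$, with the identical extension step of choosing $y \in Ux \setminus \bigl( \phi(D) \cup \bigcup_{g \in E} (g\phi(D) \cup g^{-1}\phi(D)) \bigr)$, justified by the same observation that non-empty open sets in a non-discrete Hausdorff group are infinite. No substantive difference.
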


\begin{proof} Denote by $\mathscr{Z}$ the set of all pairs $(D,\phi)$ consisting of a subset $D \subseteq F$ and an injective map $\phi \colon D \to G$ such that $\phi (x) \in Ux$ for all $x \in D$ and $\phi (D) \cap g\phi (D) = \emptyset$ for every $g \in E\setminus \{ e \}$. Let $(D,\phi) \in \mathscr{Z}$ such that $|D| = \sup \{ |D'| \mid (D',\psi) \in \mathscr{Z} \}$. We claim that $|D| = |F|$. For contraposition, assume that $|D| < |F|$. Then there exists $x \in F\setminus D$. Since $G$ is a perfect Hausdorff space, every open non-empty subset of $G$ is infinite. Consequently, there exists some \begin{displaymath}
	y \in Ux \setminus \left( \phi (D) \cup \bigcup\nolimits_{g \in E} g\phi (D) \cup g^{-1}\phi (D) \right) .
\end{displaymath} We define $D' \defeq D \cup \{ x \}$ and $\psi \colon D' \to F$ such that $\psi|_{D} = \phi$ and $\psi(x) = y$. We observe that $(D',\psi)$ is a member of $\mathscr{Z}$. Since $|D| < |D'|$, this clearly contradicts our hypothesis. Hence, $|D| = |F|$. This finishes the proof. \end{proof}

\begin{lem}\label{lemma:nice.folner.sets} A non-discrete Hausdorff topological group $G$ is amenable if and only if the following holds: for every $\theta \in (0,1)$, every finite subset $E \subseteq G$, and every $U \in \mathscr{U}_{e}(G)$, there exist finite non-empty subsets $D \subseteq F \subseteq G$ and injections $\phi_{g} \colon D \to gF$ $(g \in E)$ such that \begin{enumerate}
	\item[$(1)$] $\vert D \vert \geq \theta \vert F \vert$,
	\item[$(2)$] $F \cap gF = \emptyset$ for every $g \in E\setminus \{ e \}$,
	\item[$(3)$] $\phi_{g}(x) \in Ux$ for all $g \in E$ and $x \in D$.
\end{enumerate} \end{lem}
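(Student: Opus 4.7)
The backward implication is immediate from Theorem~\ref{theorem:topological.folner}: each $\phi_g$ is precisely a matching in the bipartite graph $\mathscr{B}(F,gF,U)$ with $|\dom \phi_g| = |D| \geq \theta |F|$, so $\mu(F, gF, U) \geq \theta|F|$ for every $g \in E$, and condition~(3) of that theorem yields amenability.

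For the forward direction, my plan is to start with a F\o lner-type set supplied by Theorem~\ref{theorem:topological.folner} and then invoke Lemma~\ref{lemma:approximation.by.moving.sets} to perturb it into one whose $E$-translates are pairwise disjoint. Choose $\theta' \in (\theta,1)$ with $|E|(1-\theta') \leq 1-\theta$, and a symmetric identity neighborhood $W$ small enough that $gWg^{-1} \cdot W \cdot W^{-1} \subseteq U$ for every $g \in E$; such a $W$ is available by first picking an identity neighborhood $U_1$ with $U_1^3 \subseteq U$ and then shrinking to a symmetric $W \subseteq U_1$ with $gWg^{-1} \subseteq U_1$ for all $g \in E$, which is possible by the finiteness of $E$ and the continuity of conjugation. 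Theorem~\ref{theorem:topological.folner} then produces a finite non-empty $F_0 \subseteq G$ together with matchings $\psi_g \colon D_g^0 \to gF_0$ with $|D_g^0| \geq \theta'|F_0|$ and $\psi_g(x) \in Wx$ for $x \in D_g^0$. Setting $D^0 \defeq \bigcap_{g \in E} D_g^0$, a union bound gives $|D^0| \geq (1 - |E|(1-\theta'))|F_0| \geq \theta|F_0|$.

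Next, applying Lemma~\ref{lemma:approximation.by.moving.sets} with identity neighborhood $W$ and finite set $E$ to $F_0$ yields an injection $\phi \colon F_0 \to G$ with $\phi(x) \in Wx$ for every $x \in F_0$ and $\phi(F_0) \cap g\phi(F_0) = \emptyset$ for every $g \in E \setminus \{e\}$. I will then take $F \defeq \phi(F_0)$, $D \defeq \phi(D^0)$, and define the desired matchings by the transport rule $\phi_g(\phi(x)) \defeq g\phi(g^{-1}\psi_g(x))$ for $x \in D^0$ (noting $g^{-1}\psi_g(x) \in F_0$). Injectivity of each $\phi_g$ follows from that of both $\phi$ and $\psi_g$; property~(1) holds since $|D| = |D^0| \geq \theta |F|$; property~(2) is given by the choice of $\phi$; and property~(3) reduces to checking that $\phi_g(\phi(x))\phi(x)^{-1}$ lies in $U$.

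The main technical obstacle is precisely this bookkeeping in~(3): writing $\psi_g(x) = w_1 x$, $\phi(x) = w_2 x$, and $\phi(g^{-1}\psi_g(x)) = w_3 g^{-1}w_1 x$ with $w_1,w_2,w_3 \in W$, a short computation gives
\begin{displaymath}
\phi_g(\phi(x))\phi(x)^{-1} \,=\, g w_3 g^{-1} w_1 w_2^{-1} \,\in\, gWg^{-1} \cdot W \cdot W^{-1} \,\subseteq\, U ,
\end{displaymath}
so the conjugation-invariant choice of $W$ is exactly what is needed to absorb the three independent $W$-perturbations into $U$. Non-discreteness of $G$ enters only through Lemma~\ref{lemma:approximation.by.moving.sets}, which requires $G$ to be perfect -- a property automatic for non-discrete Hausdorff topological groups.
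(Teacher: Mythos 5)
Your proof is correct and follows essentially the same route as the paper: both start from Theorem~\ref{theorem:topological.folner} with a threshold boosted to absorb the union bound over $E$, perturb $F_{0}$ via Lemma~\ref{lemma:approximation.by.moving.sets} to separate the $E$-translates, transport the matchings by the same conjugation formula $\phi_{g}(\phi(x)) = g\phi(g^{-1}\psi_{g}(x))$, and use a conjugation-adjusted identity neighborhood to absorb the three perturbations into $U$ (the paper uses $V^{3}\subseteq U$ with $W=\bigcap_{g\in E}g^{-1}Vg$, which is the same device as your $gWg^{-1}\cdot W\cdot W^{-1}\subseteq U$). No gaps.
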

	
\begin{proof} ($\Longleftarrow$) This is an immediate consequence of Theorem~\ref{theorem:topological.folner}.
		
($\Longrightarrow$) Let $\theta \in (0,1)$, $E \subseteq \mathscr{F}(G)$, and $U \in \mathscr{U}_{e}(G)$. Without loss of generality, suppose that $e \in E = E^{-1}$. Choose $V \in \mathscr{U}_{e}(G)$ with $V^{-1} = V$ and $V^{3} \subseteq U$, and let $W \defeq \bigcap_{g \in E} g^{-1}Vg$. By Theorem~\ref{theorem:topological.folner}, there exists a finite non-empty subset $F_{0} \subseteq G$ such that \begin{displaymath}
	\forall g \in E \colon \quad \mu (F_{0},gF_{0},V) \geq \left(1-\tfrac{1-\theta}{\vert E \vert}\right)\vert F_{0} \vert .
\end{displaymath} By Lemma~\ref{lemma:approximation.by.moving.sets}, there is an injection $\alpha \colon F_{0} \to G$ with $\alpha (x) \in Wx$ for all $x \in F_{0}$. For each $g \in E$, choose a subset $D_{g} \subseteq F_{0}$ and an injection $\psi_{g} \colon D_{g} \to gF_{0}$ such that $\vert D_{g} \vert = \mu (F_{0},gF_{0},V)$ and $\psi_{g}(x) \in Vx$ for all $x \in D_{g}$. Define $F \defeq \alpha (F_{0})$ and $D \defeq \bigcap_{g \in E} \alpha (D_{g})$. Clearly, $F \cap gF = \emptyset$ for every $g \in E\setminus \{ e \}$. Furthermore, note that \begin{align*}
	\vert D \vert &= \vert F \vert - \vert F\setminus D \vert = \vert F_{0} \vert - \left\vert \bigcup\nolimits_{g \in E} \alpha(F_{0}\setminus D_{g}) \right\vert \geq \vert F_{0} \vert - \sum\nolimits_{g \in E} \vert F_{0}\setminus D_{g} \vert \\
	&\geq \vert F_{0}\vert - \vert E \vert \left( \vert F_{0} \vert - \left(1-\tfrac{1-\theta}{\vert E \vert}\right)\vert F_{0} \vert \right) = \theta \vert F_{0} \vert = \theta \vert F \vert .
\end{align*} Finally, for each $g \in E$, we consider the injection $\phi_{g} \colon D \to gF$ given by \begin{displaymath}
	\phi_{g} (x) \defeq g\alpha(g^{-1}\psi_{g}(\alpha^{-1}(x))) \qquad (x \in D) ,
\end{displaymath} and we observe that \begin{displaymath}
	\phi_{g}(x) \in gWg^{-1}\psi_{g}(\alpha^{-1}(x)) \subseteq V\psi_{g}(\alpha^{-1}(x)) \subseteq V^{2}\alpha^{-1}(x) \subseteq V^{3}x \subseteq Ux
\end{displaymath} for all $x \in D$. This completes the argument. \end{proof}
	
For convenience, we recall the following well-known variation of the well-ordering theorem.
	
\begin{lem}\label{lemma:well.ordering} For every set $X$, there exists a well-ordering $<$ on $X$ such that \begin{displaymath}
	\forall x \in X \colon \quad \vert \{ y \in X \mid y < x \} \vert < \vert X \vert .
\end{displaymath} \end{lem}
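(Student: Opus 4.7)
The plan is to realize $X$ as the underlying set of an initial ordinal and transport the canonical well-ordering. Concretely, the well-ordering theorem (equivalent to the axiom of choice) ensures that $X$ is in bijection with some ordinal. Let $\kappa$ be the \emph{least} such ordinal, i.e.\ the cardinal number $|X|$ regarded as the initial ordinal of its cardinality class. Fix any bijection $f \colon \kappa \to X$ and define a binary relation $<$ on $X$ by declaring $f(\alpha) < f(\beta)$ if and only if $\alpha \in \beta$.

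Since the $\in$-order on $\kappa$ is a well-ordering and $f$ is a bijection, the transported relation $<$ is automatically a well-ordering on $X$. It remains to verify the cardinality condition. For an arbitrary $x \in X$, write $x = f(\alpha)$ with $\alpha < \kappa$. By construction
\[
	\{ y \in X \mid y < x \} \; = \; f(\{ \beta < \kappa \mid \beta \in \alpha \}) \; = \; f(\alpha),
\]
so this set is in bijection with the ordinal $\alpha$ and hence has cardinality $|\alpha|$. Now by minimality of $\kappa$ among ordinals in bijection with $X$, no ordinal $\alpha < \kappa$ can satisfy $|\alpha| = |X| = \kappa$; consequently $|\alpha| < \kappa$, which gives
\[
	\bigl| \{ y \in X \mid y < x \} \bigr| \; = \; |\alpha| \; < \; \kappa \; = \; |X|.
\]

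There is really no substantial obstacle here: the only point that deserves care is the justification that $|\alpha| < |X|$ for every $\alpha < \kappa$, and this is immediate from the definition of $\kappa$ as the initial ordinal of its cardinality (equivalently, as the least ordinal equinumerous with $X$). Thus the desired well-ordering is obtained.
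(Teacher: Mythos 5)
Your proof is correct. The underlying idea is the same as the paper's --- pass to a well-ordering of minimal order type, so that every proper initial segment has strictly smaller cardinality --- but the two arguments realize it differently. You invoke the von Neumann ordinal machinery: identify $\vert X \vert$ with the initial ordinal $\kappa$, i.e.\ the least ordinal equinumerous with $X$, and transport the $\in$-ordering along a bijection; minimality of $\kappa$ then hands you $\vert \alpha \vert < \kappa$ for every $\alpha < \kappa$ at no cost. The paper instead stays entirely inside the given set $X$: it starts from an arbitrary well-ordering $<$ of $X$, considers the set $Y$ of points whose set of predecessors has full cardinality $\vert X \vert$, takes the $<$-least element $z$ of $Y$ (if $Y$ is non-empty), fixes a bijection of $X$ with the initial segment below $z$, and pulls the order back; the minimality of $z$ then yields the required cardinality bound. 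Your route is shorter and more standard for a reader comfortable with ordinals and initial ordinals; the paper's is more self-contained, using only the well-ordering theorem and elementary cardinality comparisons without ever mentioning ordinals. One cosmetic remark: in the displayed identity $f(\{\beta < \kappa \mid \beta \in \alpha\}) = f(\alpha)$ you are conflating the image $f[\alpha]$ of the set $\alpha$ with the value $f(\alpha)$; the intended meaning is clear, but the notation should be fixed.
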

	
\begin{proof} Let $X$ be a set. By the well-ordering theorem, there is a well-ordering $<$ on $X$. If $\vert \{ y \in X \mid y < x \} \vert < \vert X \vert$ for all $x \in X$, then we are done. Otherwise, \begin{displaymath}
	Y \defeq \{ x \in X \mid \vert \{ y \in X \mid y < x \} \vert = \vert X \vert \} \ne \emptyset ,
\end{displaymath} and thus $(Y,{<})$ admits a smallest element $z \in Y$. Choose a bijection $f \colon X \to \{ x \in X \mid x < z \}$ and define a relation $\prec$ on $X$ by \begin{displaymath}
	x \prec x' \ :\Longleftrightarrow \ f(x) < f(x') \qquad (x,x' \in X) .
\end{displaymath} Clearly, $(X,{\prec})$ is a well-ordered set. To prove the additional property, consider any $x \in X$. Since $f(x) < z$ and $z$ is the smallest element of $(Y,{<})$, we have $\vert \{ y \in X \mid y < f(x) \} \vert < \vert X \vert$. Moreover, as $f(\{ y \in X \mid y \prec x \}) = \{ y \in X \mid y < f(x) \}$, it follows that \begin{displaymath}
	\vert \{ y \in X \mid y \prec x \} \vert = \vert \{ y \in X \mid y < f(x) \} \vert < \vert X \vert .\qedhere
\end{displaymath} \end{proof}
	
Now everything is in place for the proof of Theorem~\ref{theorem:approximating.actions}.
	
\begin{proof}[Proof of Theorem~\ref{theorem:approximating.actions}] Note that the discrete case is trivial, which is why we will assume that $G$ is not discrete.
	
(1)$\Longrightarrow$(2). Due to being a non-discrete Hausdorff space, $G$ is infinite. Thus, $\vert I \vert = \vert G \vert$ for $I \defeq \{ (E,n) \in \mathscr{F}(G) \times (\mathbb{N}\setminus \{ 0 \} ) \mid e \in E \}$. By Lemma~\ref{lemma:well.ordering}, there exists a well-ordering $<$ on $I$ such that $\vert \{ (E',n') \in I \mid (E',n') < (E,n) \} \vert < \vert I \vert$ for every $(E,n) \in I$. Let $U \in \mathscr{U}_{e}(G)$. Without loss of generality, we assume that $U^{-1} = U$. Denote by $\mathscr{Z}$ the set of all those families $(F_{E,n},D_{E,n},(\phi_{E,n,g})_{g \in E\setminus \{ e \}})_{(E,n) \in J}$ which satisfy the following conditions: \begin{enumerate}
	\item[(i)] $J \subseteq I$ is $<$-downward closed, i.e., $(E',n') \in J$ whenever $(E',n') < (E,n) \in J$, 
	\item[(ii)] $D_{E,n} \subseteq F_{E,n} \in \mathscr{F}(G) \setminus \{ \emptyset \}$ and $\vert D_{E,n} \vert \geq \left(1-\tfrac{1}{n}\right)\vert F_{E,n} \vert$ for all $(E,n) \in J$,
	\item[(iii)] $F_{E,n} \cap gF_{E,n} = \emptyset$ for all $(E,n) \in J$ and $g \in E\setminus \{ e \}$,
	\item[(iv)] $\phi_{E,n, g} \colon D_{E,n} \to gF_{E,n}$ is an injection for all $(E,n) \in J$ and $g \in E\setminus \{ e \}$,
	\item[(v)] $\phi_{E,n,g} (x) \in Ux$ for all $(E,n) \in J$, $g \in E\setminus \{ e \}$, and $x \in D_{E,n}$,
	\item[(vi)] $EF_{E,n} \cap E'F_{E',n'} = \emptyset$ for any two $(E,n), (E',n') \in J$ with $(E,n) \ne (E',n')$.
\end{enumerate} Furthermore, let us define a partial order $\preceq$ on $\mathscr{Z}$ by setting \begin{displaymath}
	(F_{E,n},D_{E,n},(\phi_{E,n,g})_{g \in E\setminus \{ e \}})_{(E,n) \in J} \ \preceq \ (F'_{E,n},D'_{E,n},(\phi'_{E,n,g})_{g \in E\setminus \{ e \}})_{(E,n) \in J'}
\end{displaymath} if $J \subseteq J'$ and $F_{E,n} = F'_{E,n}$, $D_{E,n} = D'_{E,n}$, $\phi_{E,n,g} = \phi'_{E,n,g}$ for all $(E,n) \in J$ and $g \in E$. It is straightforward to check that $(\mathscr{Z},{\preceq})$ is an inductively ordered set. Hence, Zorn's lemma asserts the existence of a maximal element $(F_{E,n},D_{E,n},(\phi_{E,n,g})_{g \in E\setminus \{ e \}})_{(E,n) \in J} \in \mathscr{Z}$.
		
We show that $J = I$. For contradiction, suppose that $I \ne J$. Let $(E_{0},n_{0})$ be the smallest element of $({I\setminus J},{<})$. By clause~(i), we have $J = \{ (E,n) \in I \mid (E,n) < (E_{0},n_{0}) \}$. Due to our assumption about the well-ordering, it follows that $\vert J \vert < \vert I \vert$. By Lemma~\ref{lemma:nice.folner.sets}, there exist finite non-empty subsets $D \subseteq F \subseteq G$ and injections $\phi_{g} \colon D \to gF$ ($g \in E_{0}\setminus \{ e \}$) with \begin{enumerate}
	\item[$\bullet$] $\vert D \vert \geq \left( 1 - \tfrac{1}{n_{0}} \right) \vert F \vert$,
	\item[$\bullet$] $F \cap gF = \emptyset$ for every $g \in E_{0}\setminus \{ e \}$,
	\item[$\bullet$] $\phi_{g}(x) \in Ux$ for all $g \in E_{0}\setminus \{ e \}$ and $x \in D$.
\end{enumerate} Let $B \defeq \bigcup \{ EF_{E,n} \mid (E,n) \in J \}$. Notice that either $J$ is finite and therefore $(E_{0}F)^{-1}B$ is finite as well, or $J$ is infinite and then $\vert (E_{0}F)^{-1}B \vert = \vert J \vert$. Since $G$ is infinite and $\vert J \vert < \vert I \vert$, either way we have $\vert (E_{0}F)^{-1}B \vert < \vert G \vert$. Thus, $(E_{0}F)^{-1}B \ne G$. Consequently, we may choose $z \in G\setminus ((E_{0}F)^{-1}B)$. Clearly, $E_{0}Fz \cap B = \emptyset$. Define $F_{E_{0},n_{0}} \defeq Fz$, $D_{E_{0},n_{0}} \defeq Dz$, and \begin{displaymath}
	\phi_{E_{0},n_{0},g} \colon D_{E_{0},n_{0}} \to gF_{E_{0},n_{0}}, \quad x \mapsto \phi_{g}(xz^{-1})z
\end{displaymath} for $g \in E_{0}\setminus \{ e \}$. Let $J' \defeq J \cup \{ (E_{0},n_{0}) \}$. We claim that $(F_{E,n},D_{E,n},(\phi_{E,n,g})_{g \in E\setminus \{ e \}})_{(E,n) \in J'}$ is a member of $\mathscr{Z}$. By construction, it satisfies the conditions (i)--(iv) and (vi). Moreover, if $(E,n) \in J'$, then either $(E,n) \in J$ and thus $\phi_{E,n,g} (x) \in Ux$ for all $g \in E\setminus \{ e \}$ and $x \in D_{E,n}$, or $(E,n) = (E_{0},n_{0})$ and then \begin{displaymath}
	\phi_{E,n,g}(x) = \phi_{E_{0},n_{0},g}(x) = \phi_{g}(xz^{-1})z \in Uxz^{-1}z = Ux
\end{displaymath} for all $g \in E\setminus \{ e \}$ and $x \in D_{E,n}$. This shows that $(F_{E,n},D_{E,n},(\phi_{E,n,g})_{g \in E\setminus \{ e \}})_{(E,n) \in J'}$ has the properties (i)--(vi) listed above and hence belongs to $\mathscr{Z}$, which clearly contradicts the maximality of $(F_{E,n},D_{E,n},(\phi_{E,n,g})_{g \in E\setminus \{ e \}})_{(E,n) \in J}$. Thus, $I = J$.
		
Finally, let us define a map $\psi \colon G \to \mathrm{Sym}(G)$ by setting $\psi (e) \defeq \id_{G}$ and \begin{displaymath}
	\psi (g)(x) \defeq \begin{cases}
			\phi_{E,n,g}(x) & \text{if } g \in E \text{ and } x \in D_{E,n} \text{ for some } (E,n) \in I , \\
			\phi_{E,n,g}^{-1}(x) & \text{if } g \in E \text{ and } x \in \phi_{E,n,g}(D_{E,n}) \text{ for some } (E,n) \in I , \\
			x & \text{otherwise}
		\end{cases}
\end{displaymath} for $g, x \in G$, $g \ne e$. We note that $\psi$ is well defined due to clauses (ii)--(iv) and (vi) above. Moreover, $\{ \psi(g)(x)x^{-1} \mid g,x \in G \} \subseteq U$ by clause (v) and symmetry of $U$. Hence, \begin{displaymath}
	\alpha \colon G \to \mathrm{Sym}(G), \quad g \mapsto \psi (g) \circ \lambda_{g}
\end{displaymath} is a member of $\mathscr{N}_{G}(U)$. We are left to prove that the action of $\Gamma (\alpha)$ on $G$ is amenable. To this end, we will verify the amenability criterion in Remark~\ref{remark:amenable.actions} with respect to the generating set $S \defeq \{ \alpha (g) \mid g \in G \}$. Let $\theta > 1$ and consider a finite subset $E' \subseteq S$. Take a finite subset $E \subseteq G$ with $e \in E$ and $E' \subseteq \{ \alpha (g) \mid g \in E \}$. Choose $n \in \mathbb{N}\setminus \{ 0 \}$ so that $1 + \frac{\vert E \vert}{n} \leq \theta$. Then \begin{displaymath}
	E'F_{E,n} \subseteq \bigcup \{ \alpha (g)(F_{E,n}) \mid g \in E \} \subseteq F_{E,n} \cup \bigcup \{ (gF_{E,n})\setminus \phi_{E,n,g}(D_{E,n}) \mid g \in E\setminus \{ e \} \} ,
\end{displaymath} and hence \begin{displaymath}
	\vert E'F_{E,n} \vert \leq \vert F_{E,n} \vert + \vert E \vert (\vert F_{E,n} \vert - \vert D_{E,n} \vert ) \leq \left(1+\tfrac{\vert E \vert}{n}\right) \vert F_{E,n} \vert \leq \theta \vert F_{E,n} \vert
\end{displaymath} by clauses (ii) and (iv) above. Thus, the action is amenable by Theorem~\ref{theorem:amenable.actions}.

(2)$\Longrightarrow$(3). Let $U \in \mathscr{U}_{e}(G)$. Take any $\alpha \in \mathscr{N}_{G}(U)$ and consider the associated injection \begin{displaymath}
	f_{\alpha} \colon G \to G \times \mathscr{N}_{G}(U), \quad x \mapsto (x,\alpha) .
\end{displaymath} Evidently, $f_{\alpha}(G) = G \times \{ \alpha \}$ is $\Gamma (\alpha)$-invariant and $h \colon \Gamma_{U}(G) \to \Gamma (\alpha), \, \gamma \mapsto f_{\alpha}^{-1} \circ \gamma \circ f_{\alpha}$ is a well-defined homomorphism satisfying $f_{\alpha} \circ h(\gamma) = \gamma \circ f_{\alpha}$ for every $\gamma \in \Gamma_{U}(G)$. Therefore, if the action of $\Gamma (\alpha)$ on $G$ is amenable, then so is the action of $\Gamma_{U}(G)$ on $f_{\alpha}(G)$ and hence on $G \times \mathscr{N}_{G}(U)$. This proves the desired implication.
		
(3)$\Longrightarrow$(1). We are going to utilize Theorem~\ref{theorem:topological.folner}. To this end, let $\theta \in (0,1)$, $U \in \mathscr{U}_{e}(G)$, and $E \in \mathscr{F}(G)$. By assumption, $\Gamma_{U}(G)$ acts amenably on $G \times \mathscr{N}_{G}(U)$. According to Theorem~\ref{theorem:amenable.actions}, there thus exists a finite non-empty subset $F' \subseteq G \times \mathscr{N}_{G}(U)$ such that $\vert E'F' \vert \leq (2-\theta)\vert F' \vert$ for $E' \defeq \{ (\bigoplus_{\alpha \in \mathscr{N}_{G}(U)} \alpha)(g) \mid g \in E \}$. It follows that there exists some $\alpha \in \mathscr{N}_{G}(U)$ such that $F \defeq f_{\alpha}^{-1}(F')$ is non-empty and $\vert E''F \vert \leq (2-\theta)\vert F \vert$ for $E'' \defeq \{ \alpha (g) \mid g \in E \}$. Let $g \in E$. Consider $D \defeq F \cap \alpha(g)(F)$ and the injection $\phi \colon D \to gF, \, x \mapsto g\alpha(g)^{-1}(x)$. Observe that \begin{displaymath}
	\vert D \vert = \vert F \vert - \vert \alpha(g)(F) \setminus F \vert \geq \vert F \vert - (\vert E''F \vert - \vert F \vert ) \geq \theta \vert F \vert .
\end{displaymath} For all $x \in G$, we have $gx \in U\alpha(g)(x)$, and so $g\alpha(g)^{-1}(x) \in Ux$. Hence, $\phi (x) \in Ux$ for all $x \in D$. Thus, $\mu (F,gF,U) \geq \theta \vert F \vert$. Due to Theorem~\ref{theorem:topological.folner}, this means that $G$ is amenable. \end{proof}
	
\begin{remark} The map $\alpha$ constructed for the proof of the implication (1)$\Longrightarrow$(2) above has the additional property that $\alpha (g) \circ \lambda_{g^{-1}} = \psi (g)$ is an involution for every $g \in G$. \end{remark}
	
Before moving on to our topological solution to von Neumann's problem (Corollary~\ref{corollary:von.neumann}), we want to briefly discuss Theorem~\ref{theorem:approximating.actions} for compact groups. This shall be done by proving the proposition below. Recall that a topological group $G$ is \emph{precompact} if, for every $U \in \mathscr{U}_{e}(G)$, there exists a finite subset $F \subseteq G$ such that $UF = G$.
	
\begin{prop}\label{proposition:precompact} A topological group $G$ is precompact if and only if, for every identity neighborhood $U \in \mathscr{U}_{e}(G)$, there exists $\alpha \in \mathscr{N}_{G}(U)$ such that $\Gamma (\alpha)$ is finite. \end{prop}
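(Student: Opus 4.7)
The reverse implication is immediate: if $\Gamma(\alpha)$ is finite then the orbit $\Gamma(\alpha) \cdot e$ is a finite set $Y \subseteq G$, and $\alpha(g)(e) \in Ug \cap Y$ for every $g \in G$ forces $g \in U^{-1}Y$; choosing $U$ symmetric gives $G = UY$, so $G$ is precompact.

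For the forward direction the discrete case is trivial ($G$ is then finite and $\alpha = \lambda$ works), so assume $G$ is non-discrete Hausdorff precompact. Then $G$ sits densely in its Raikov completion $\hat{G}$, a compact Hausdorff group, and inherits the SIN property, so every $U \in \mathscr{U}_{e}(G)$ contains a symmetric conjugation-invariant $V$ with $V^{8} \subseteq U$. Fix such a $V$. Exploiting atomlessness of normalized Haar measure $\mu$ on the infinite compact group $\hat{G}$, together with a finite cover $\hat{G} = Vy_{1} \cup \cdots \cup Vy_{k}$, I plan to refine matters to a Borel partition $\hat{G} = B_{1} \cupdot \cdots \cupdot B_{n}$ in which every $B_{i}$ lies inside a single left translate of $V$, has nonempty interior, and satisfies $\mu(B_{i}) = 1/n$. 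Setting $A_{i} \defeq B_{i} \cap G$, density of $G$ in $\hat{G}$ and nonempty interior of $B_{i}$ guarantee that $A_{i}$ contains a nonempty open subset of $G$; since $G$ is precompact, every such subset has cardinality $\vert G \vert$ (from $G = U_{0}F_{0}$ with $F_{0}$ finite), so $\vert A_{i} \vert = \vert G \vert$. Pick $x_{i} \in A_{i}$; then $A_{i} \subseteq V^{2}x_{i}$. Fix bijections $\psi_{i} \colon A_{1} \to A_{i}$ with $\psi_{1} = \id$ and $\psi_{i}(x_{1}) = x_{i}$, and let $\rho \colon \mathrm{Sym}(\{1,\ldots,n\}) \to \mathrm{Sym}(G)$, $\rho(\pi)\vert_{A_{i}} \defeq \psi_{\pi(i)} \circ \psi_{i}^{-1}$, be the resulting finite-image homomorphism.

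For each $g \in G$ I apply Corollary~\ref{corollary:hall} to the bipartite graph on two copies of $\{1,\ldots,n\}$ with an edge between $i$ and $j$ iff $\mu(gB_{i} \cap B_{j}) > 0$. Translation invariance of $\mu$ verifies Hall's condition: the disjoint translates $\{gB_{i}\}_{i \in S}$ have total $\mu$-mass $\vert S \vert / n$ and lie (modulo null sets) in $\bigcupdot_{j \in N(S)} B_{j}$, forcing $\vert N(S) \vert \geq \vert S \vert$. The resulting perfect matching $\sigma_{g}$ satisfies $\mu(gB_{i} \cap B_{\sigma_{g}(i)}) > 0$ for every $i$, and regularity of $\mu$ together with density of $G$ produces some $z_{i} \in gA_{i} \cap A_{\sigma_{g}(i)}$. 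Since $gA_{i} \subseteq V^{2}gx_{i}$ and $A_{\sigma_{g}(i)} \subseteq V^{2}x_{\sigma_{g}(i)}$, the common point $z_{i}$ forces $x_{\sigma_{g}(i)}(gx_{i})^{-1} \in V^{4}$. Setting $\alpha(g) \defeq \rho(\sigma_{g})$, a triangle computation using invariance of $V$ gives $\alpha(g)(h)(gh)^{-1} \in V^{2} \cdot V^{4} \cdot V^{2} = V^{8} \subseteq U$ for every $h \in G$, so $\alpha \in \mathscr{N}_{G}(U)$ while $\Gamma(\alpha) \subseteq \rho(\mathrm{Sym}(\{1,\ldots,n\}))$ is finite. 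The main technical hurdle is producing the equal-measure $V$-small Borel partition of $\hat{G}$ with nonempty interiors while simultaneously controlling $\vert A_{i} \vert$; once that partition is in place, Hall's theorem together with Haar invariance and the final triangle chase in powers of $V$ dispatch the rest cleanly.
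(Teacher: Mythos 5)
Your reverse implication is correct and essentially identical to the paper's (finite orbit $Y = \Gamma(\alpha)\cdot e$, then $G = UY$). The forward direction, however, has a genuine gap precisely at the step you yourself flag as the ``main technical hurdle'': the existence, for every identity neighbourhood $V$ of the compact completion $\hat{G}$, of a Borel partition $\hat{G} = B_{1} \cupdot \cdots \cupdot B_{n}$ with each $B_{i}$ inside a single translate of $V$, of Haar measure exactly $1/n$, and with nonempty interior. This is not a routine refinement of a finite cover: after disjointifying $\hat{G} = \hat{V}y_{1} \cup \cdots \cup \hat{V}y_{k}$ the pieces have measures that need not be commensurable with any common $1/n$, and mass cannot be freely shifted between pieces lying in different translates of $V$ without violating the smallness constraint. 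One can argue around this (the nerve of the cover decomposes $\hat{G}$ into clopen blocks whose Haar measures are rational because they are unions of cosets of the identity component, and within each block mass can be redistributed along overlaps), but that is a real argument carrying most of the weight of the proof, and you have not supplied it. Without the equal-measure hypothesis your verification of Hall's condition via $\mu(gB_{i}) = |S|/n$ collapses. A second, smaller error: from $\mu(gB_{i} \cap B_{\sigma_{g}(i)}) > 0$ you cannot conclude that $gA_{i} \cap A_{\sigma_{g}(i)} \neq \emptyset$ by ``regularity and density'' --- a positive-measure Borel set need not meet a countable dense subgroup. This one is harmless, since any point $z_{i}$ of $gB_{i} \cap B_{\sigma_{g}(i)}$ in $\hat{G}$ already forces $x_{\sigma_{g}(i)}(gx_{i})^{-1} \in V^{4} \cap G$ (the $x_{i}$ and $g$ all lie in $G$), but as written the claim is false and you would still need to relate powers of $\hat{V}$ in $\hat{G}$ back to a prescribed neighbourhood of $G$.

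For comparison, the paper avoids measure theory entirely: it takes a \emph{maximal} $V$-separated finite set $F$ (i.e.\ $FF^{-1} \cap V = \{e\}$ with $|F|$ maximal), and Hall's condition for $\mathscr{B}(F, gF, V)$ drops out of maximality by the one-line counting argument that $((gF)\setminus N_{\mathscr{B}}(S)) \cup S$ is again $V$-separated. The role of your equal-measure cells is played there by the fibres of a map $\pi \colon G \to F$ with $\pi(x) \in Vx$, which all have cardinality $|G|$; the homomorphism $\mathrm{Sym}(F) \to \mathrm{Sym}(G)$ is then built exactly as your $\rho$. Your measure-theoretic route is salvageable, but you should either prove the partition lemma or replace it by the maximal-separated-set device, which delivers Hall's condition for free.
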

	
\begin{proof} ($\Longrightarrow$) Without loss of generality we assume that $G$ is infinite. Let $U \in \mathscr{U}_{e}(G)$. Fix $V \in \mathscr{U}_{e}(G)$ such that $V^{-1} = V$ and $V^{3} \subseteq U$. Since every precompact topological group is SIN, we may also assume that $gV = Vg$ for all $g \in G$. By $G$ being precompact, there exists a finite subset $F \subseteq G$ such that $FF^{-1} \cap V = \{ e \}$ and \begin{displaymath}
	\vert F \vert = \sup \left\{ \vert E \vert \left\vert \, E \subseteq G , \, EE^{-1} \cap V = \{ e \} \right\} . \right.
\end{displaymath} Let $g \in G$. As $gV = Vg$, it follows that $gF(gF)^{-1} \cap V = g(FF^{-1} \cap V)g^{-1} = \{ e \}$. We prove that the bipartite graph $\mathscr{B} \defeq \mathscr{B}(F,gF,V)$ admits a perfect matching. To this end, let $S \subseteq F$ and $T \defeq N_{\mathscr{B}}(S)$. Consider the set $E \defeq ((gF)\setminus T) \cup S$. If $x,y \in E$ and $xy^{-1} \in V$, then either $\{ x,y \} \subseteq gF$ and thus $x=y$ by the above, or $\{ x,y \} \cap S \ne \emptyset$ and hence $\{ x,y \} \subseteq S$, which also implies that $x=y$. Therefore, $EE^{-1} \cap V = \{ e \}$. It follows that \begin{displaymath}
	\vert F \vert \geq \vert E \vert = \vert (gF)\setminus T \vert + \vert S \vert = \vert F \vert - \vert T \vert + \vert S \vert
\end{displaymath} i.e., $\vert S \vert \leq \vert T \vert$. Consequently, $\mathscr{B}$ admits a perfect matching $\phi_{g} \colon F \to gF$ by Corollary~\ref{corollary:hall}. Let $W \in \mathscr{U}_{e}(G)$ such that $W^{-1} = W$ and $W^{2} \subseteq V$. Since $G = VF$ and $Wx \cap Wy = \emptyset$ for any two distinct $x,y \in F$, we may choose a map $\pi \colon G \to F$ such that \begin{enumerate}
	\item[$\bullet$] $Wx \subseteq \pi^{-1}(x)$ for every $x \in F$,
	\item[$\bullet$] $\pi (x) \in Vx$ for all $x \in G$.
\end{enumerate} Since $G$ is infinite and precompact, every non-empty open subset of $G$ has the same cardinality as $G$. This readily implies that $\vert \pi^{-1}(x) \vert = \vert G \vert$ for every $x \in F$. For each $x \in F$, choose a bijection $\beta_{x} \colon \pi^{-1}(x) \to G$. Let us define a map $h \colon \mathrm{Sym}(F) \to \mathrm{Sym}(G)$ by setting \begin{displaymath}
	h(\gamma )(x) \defeq \beta_{\gamma \pi (x)}^{-1}(\beta_{\pi (x)}(x)) \qquad (\gamma \in \mathrm{Sym}(F), \, x \in G) .
\end{displaymath} Note that $\pi (h(\gamma)(x)) = \pi (\beta_{\gamma \pi (x)}^{-1}(\beta_{\pi (x)}(x))) = \gamma \pi (x)$ for all $\gamma \in \mathrm{Sym}(F)$ and $x \in G$. It follows that $h$ is a homomorphism: if $\gamma, \gamma' \in \mathrm{Sym}(F)$ and $x \in G$, then \begin{align*}
	h(\gamma'\gamma)(x) &= \beta^{-1}_{\gamma'\gamma \pi (x)}(\beta_{\pi (x)}(x)) = \beta^{-1}_{\gamma'\gamma \pi (x)}(\beta_{\gamma \pi (x)}(\beta^{-1}_{\gamma \pi (x)}(\beta_{\pi (x)}(x)))) \\
		&= \beta^{-1}_{\gamma'\pi (h(\gamma)(x))}(\beta_{\pi (h(\gamma)(x))}(h(\gamma)(x))) = h(\gamma')(h(\gamma)(x)) .
\end{align*} Now, for each $g \in G$, let us define $\gamma (g) \in \mathrm{Sym}(F)$ by \begin{displaymath}
	\gamma (g)(x) \defeq \phi^{-1}_{g}(gx) \qquad (x \in F) ,
\end{displaymath} and put $\alpha (g) \defeq h(\gamma (g)) \in \mathrm{Sym}(G)$. Then $\{ \alpha (g) \mid g \in G \} = \{ h(\gamma (g)) \mid g \in G\} \subseteq h(\mathrm{Sym}(F))$. Hence, $\Gamma (\alpha)$ is contained in the finite group $h(\mathrm{Sym}(F))$ and therefore finite itself. It remains to prove that $\alpha \in \mathscr{N}_{G}(U)$. Let $g,x \in G$. By choice of $\beta_{\gamma(g)\pi (g^{-1}x)}$, $\phi_{g}$, and $\pi$, it follows that \begin{align*}
	\alpha(g)(x) = h(\gamma(g))(x) = \beta^{-1}_{\gamma(g)\pi(x)} (\beta_{\pi(x)}(x)) \in V\gamma(g)\pi (x) &= V\phi_{g}^{-1}(g\pi (x)) \subseteq V^{2}g\pi (x) \\
		&\subseteq V^{2}gVx = V^{3}gx \subseteq Ugx .
\end{align*} This proves that $\alpha$ is contained in $\mathscr{N}_{G}(U)$, which completes the argument.
		
($\Longleftarrow$) Let $U \in \mathscr{U}_{e}(G)$. By our hypothesis, there is $\alpha \in \mathscr{N}_{G}(U^{-1})$ such that $\Gamma (\alpha)$ is finite. Consider the finite subset $F \defeq \{ \alpha(g)(e) \mid g \in G \} \subseteq G$. Clearly, if $g \in G$, then $\alpha(g)(e) \in U^{-1}g$ and thus $g \in U\alpha(g)(e) \subseteq UF$. That is, $G = UF$. This shows that $G$ is precompact. \end{proof}

Next up we turn our attention towards von Neumann's problem. In his seminal work on amenable groups~\cite{vonNeumann} von Neumann observed that amenability of a group is inherited by each of its subgroups and that the free group $F_{2}$ on two generators is not amenable. The question whether every non-amenable group would contain a subgroup isomorphic to $F_{2}$ was first posed in print by Day~\cite{day57}, however became known as the \emph{von Neumann problem}, and was solved in the negative by Ol'{\v{s}}anski{\u\i}~\cite{olshanskii}. Despite this negative answer to the original question, there are a number of positive solutions to very interesting variations of the von Neumann problem, including the ones by Whyte~\cite{whyte} in terms of geometric group theory, by Gaboriau and Lyons~\cite{GaboriauLyons} in the context of measured group theory, as well as by Marks and Unger~\cite{MarksUnger} concerning measurable dynamics. Whyte~\cite{whyte} proved that a finitely generated group $G$ is non-amenable if and only if $F_{2}$ admits a translation-like action on $G$. Our Corollary~\ref{corollary:von.neumann} generalizes Whyte's result to the realm of topological groups and therefore may be regarded as a topological solution to von Neumann's problem.

We are going to state Whyte's result (Theorem~\ref{theorem:whyte}) as well as our topological variant (Corollary~\ref{corollary:von.neumann}) in terms of wobbling groups.

\begin{definition} The \emph{wobbling group} of a group action $(G,X)$ is defined to be \begin{displaymath}
	W(G,X) \defeq \{ \gamma \in \mathrm{Sym}(X) \mid \exists E \in \mathscr{F}(G)\colon \, \gr (\gamma) \subseteq \{ (x,gx) \mid x \in X, \, g \in E \} \} .
\end{displaymath} Given sets $X$ and $I$ and a map $\alpha \colon I \to \mathrm{Sym}(X)$, we will abbreviate $W(\alpha) \defeq W(\Gamma (\alpha),X)$. Considering the action of an arbitrary group $G$ on itself by left translations, we define the \emph{wobbling group} of $G$ to be $W(G) \defeq W(G,G)$. If $G$ is a topological group and $U \in \mathscr{U}_{e}(G)$, then we define the \emph{$U$-wobbling group} of $G$ to be \begin{displaymath}
	W_{U}(G) \, \defeq \, W(\Gamma_{U}(G),G\times \mathscr{N}_{G}(U)) \, = \, W \! \left( \bigoplus\nolimits_{\alpha \in \mathscr{N}_{G}(U)} \alpha \right) .
\end{displaymath} \end{definition}

Whyte's geometric solution~\cite{whyte} to the von Neumann problem can be stated as follows, where a subgroup $G$ of the full symmetric group $\mathrm{Sym}(X)$ of a set $X$ is said to be \emph{semi-regular} if no non-identity element of $G$ has a fixed point in $X$.

\begin{thm}[Theorem 6.1 in~\cite{whyte}]\label{theorem:whyte} A group $G$ is non-amenable if and only if the free group $F_{2}$ on two generators is isomorphic to a semi-regular subgroup of $W(G)$. \end{thm}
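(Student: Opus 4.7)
My plan is to prove the two implications separately, with the easier direction first and Whyte's constructive direction being the main obstacle.

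For the direction ($\Longleftarrow$), suppose $H \leq W(G)$ is semi-regular and isomorphic to $F_{2}$, generated inside $W(G)$ by two wobbling bijections $a,b$. Start from the Hausdorff-type paradoxical decomposition of $F_{2}$ into the five pieces $\{e\}, aA^{+}, a^{-1}A^{-}, bB^{+}, b^{-1}B^{-}$ (where $A^{\pm}$, $B^{\pm}$ are the words beginning with $a^{\pm 1}$, $b^{\pm 1}$). Semi-regularity of $H$ means that the orbit map $F_{2}\to G$ based at any $x\in G$ is injective, so this decomposition transfers orbit-by-orbit to give a decomposition of $G$ into five pieces $C_{1},\dots,C_{5}$ together with elements $\sigma_{i}\in\{a,a^{-1},b,b^{-1}\}\subseteq W(G)$ such that $G$ is equal both to $C_{1}\cup\sigma_{2}(C_{2})\cup\sigma_{3}(C_{3})$ and to $\sigma_{4}(C_{4})\cup\sigma_{5}(C_{5})$ (up to the standard rearrangement). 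Since each $\sigma_{i}$ is wobbling, fix a finite $E_{i}\subseteq G$ with $\sigma_{i}(x)\in E_{i}x$ for all $x$, and refine $C_{i}$ into the finitely many sets $C_{i,g}\defeq\{x\in C_{i}\mid\sigma_{i}(x)=gx\}$ for $g\in E_{i}$; on each piece $\sigma_{i}$ coincides with left translation by $g\in G$. This yields a genuine $G$-paradoxical decomposition of $G$ with respect to the left translation action, so by Tarski's alternative $G$ is not amenable.

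For the direction ($\Longrightarrow$), assume $G$ is non-amenable. By F\o lner's theorem (Theorem~\ref{theorem:folner}) there is a finite $E\subseteq G$ and $\theta>1$ such that $|EF|\geq\theta|F|$ for every finite $F\subseteq G$; equivalently, $G$ admits a paradoxical decomposition under its left-translation action. The plan is to construct two wobbling bijections $a,b\in W(G)$ with $\langle a,b\rangle\cong F_{2}$ acting freely on~$G$. The first step, via Hall's marriage theorem (Corollary~\ref{corollary:hall}) applied to the bipartite graph on $G\sqcup G$ with edges $\{(x,ex):e\in E\}$, produces a perfect matching $G\to G$ that is fixed-point free and has all displacements in~$E$; choosing it carefully (using that the expansion constant is strictly greater than~$1$) yields a $2$-to-$1$ wobbling map $G\to G$, i.e., a partition of $G$ into bi-infinite paths with bounded displacement. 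This gives a wobbling bijection $a\in W(G)$ of infinite order acting freely.

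Iterating this construction on the quotient space $G/\langle a\rangle$, equipped with the induced ``non-amenable'' combinatorial structure, produces a second wobbling bijection $b\in W(G)$ such that $\langle a,b\rangle$ acts freely. The key combinatorial output, and the main obstacle, is arranging the matchings so that the Schreier graph of $\langle a,b\rangle$ on each orbit is the $4$-regular tree; equivalently, one obtains a partition of $G$ into Lipschitz-embedded copies of the Cayley graph of $F_{2}$. This is Whyte's geometric argument: the non-amenability constant $\theta>1$ in Hall's theorem is precisely what supplies enough slack to upgrade a matching into a free tree action. Once the $4$-regular tree partition is in place, $\langle a,b\rangle\leq W(G)$ is semi-regular and isomorphic to~$F_{2}$, completing the proof. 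The hard part is the tree-building step, which is where the full strength of uniform non-amenability (as opposed to mere existence of a single wobbling bijection) is required.
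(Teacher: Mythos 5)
The paper does not prove Theorem~\ref{theorem:whyte} at all: it is imported verbatim as Theorem~6.1 of~\cite{whyte} and used as a black box, so there is no in-paper argument to compare against. Judging your attempt on its own terms: the ($\Longleftarrow$) direction is essentially correct. Transferring the standard paradoxical decomposition of $F_{2}$ along the orbit maps (which are injective precisely because the subgroup is semi-regular) yields a $W(G)$-paradoxical decomposition of $G$, and refining each piece $C_{i}$ into the sets $C_{i,g}\defeq\{x\in C_{i}\mid \sigma_{i}(x)=gx\}$ for $g\in E_{i}$ converts it into a genuine $G$-paradoxical decomposition for the left translation action, so Tarski's alternative applies. (A shorter route available inside this paper: a left-invariant mean on $\ell^{\infty}(G)$ would be $W(G)$-invariant by Lemma~\ref{lemma:wobbling}, hence invariant under a free $F_{2}$-action, which is impossible.)

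The ($\Longrightarrow$) direction, however, is not a proof but a description of one. The entire content of Whyte's theorem is the step you yourself label ``the main obstacle'' and ``the hard part'': upgrading the doubling condition to a partition of $G$ into uniformly Lipschitz-embedded $4$-regular trees, equivalently a semi-regular $F_{2}$ inside $W(G)$. You state what this step should produce and then defer it, so the implication is simply missing. Moreover, the intermediate claims you do make are off. First, Corollary~\ref{corollary:hall} is stated for finite bipartite graphs; the infinite version needed here requires a compactness (K\"onig/Rado) argument, and applied to the graph on $G\sqcup G$ with edges $(x,ex)$ it only needs $|ES|\geq |S|$, which already holds when $e\in E$ and is satisfied by the identity matching --- to get anything useful one must apply Hall to a bipartite graph between $G\times\{1,2\}$ and $G$ using a doubling inequality $|EF|\geq 2|F|$. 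Second, a $2$-to-$1$ bounded-displacement surjection does not give ``a partition of $G$ into bi-infinite paths'': its inverse orbits form binary trees, and that is exactly the point of the construction; a fixed-point-free wobbling bijection of infinite order already exists on $\mathbb{Z}$, which is amenable, so producing such an $a$ cannot be where non-amenability enters. Third, ``iterating on the quotient $G/\langle a\rangle$'' is unjustified and is not how a free $F_{2}$-action is assembled: even a free action of $\langle b\rangle$ on the $\langle a\rangle$-orbits would not force $\langle a,b\rangle$ to be free, let alone semi-regular on $G$. To complete this direction you must either carry out Whyte's tree-partition construction in full or cite it, as the paper does.
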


Of course, if a topological group $G$ is not amenable, then it must not be amenable as a discrete group either, and hence $F_{2}$ embeds into $W(G)$ as a semi-regular subgroup by Theorem~\ref{theorem:whyte}. However, topological non-amenability is a stronger assertion than discrete non-amenability, and therefore one might ask for a strengthening of Whyte's result that characterizes non-amenability for topological groups. We resolve this question as follows.

\begin{cor}\label{corollary:von.neumann} Let $G$ be a Hausdorff topological group. The following are equivalent. \begin{enumerate}
	\item[$(1)$] $G$ is not amenable.
	\item[$(2)$] There exists $U \in \mathscr{U}_{e}(G)$ such that, for every $\alpha \in \mathscr{N}_{G}(U)$, the free group $F_{2}$ is isomorphic to a semi-regular subgroup of $W(\alpha)$.
	\item[$(3)$] $F_{2}$ is isomorphic to a semi-regular subgroup of $W_{U}(G)$ for some $U \in \mathscr{U}_{e}(G)$.
\end{enumerate} \end{cor}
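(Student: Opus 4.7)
The strategy is to reduce the Corollary to Theorem~\ref{theorem:approximating.actions} by invoking an extension of Whyte's Theorem~\ref{theorem:whyte} from a group acting on itself to an arbitrary group action $(H,X)$ on a non-empty set. The required extension reads: $(H,X)$ is non-amenable if and only if $F_{2}$ embeds as a semi-regular subgroup of the wobbling group $W(H,X)$. A key observation along the way is that if some $H$-orbit $O \subseteq X$ is amenable as an $H$-set, then restriction followed by any $H$-invariant mean on $\ell^{\infty}(O)$ defines an $H$-invariant mean on $\ell^{\infty}(X)$; consequently, $(H,X)$ is non-amenable if and only if every orbit is non-amenable. With this in hand, Theorem~\ref{theorem:whyte} can be applied orbit by orbit and the resulting pieces glued together via a choice function over the orbit space to produce a single semi-regular copy of $F_{2}$ inside $W(H,X)$.

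Granting this extension, both equivalences follow from unfolding the negations of clauses~(2) and~(3) in Theorem~\ref{theorem:approximating.actions}. For (1)$\Leftrightarrow$(2), the negation of clause~(2) states that there exists $U \in \mathscr{U}_{e}(G)$ such that $(\Gamma(\alpha),G)$ is non-amenable for every $\alpha \in \mathscr{N}_{G}(U)$; the action-version of Whyte applied to each such action turns this statement into the existence of $U$ yielding a semi-regular copy of $F_{2}$ inside $W(\alpha)$ for every $\alpha \in \mathscr{N}_{G}(U)$, which is precisely clause~(2) of the Corollary. For (1)$\Leftrightarrow$(3), the negation of clause~(3) states that there exists $U \in \mathscr{U}_{e}(G)$ such that the action of $\Gamma_{U}(G)$ on $G \times \mathscr{N}_{G}(U)$ is non-amenable; applying action-Whyte to this single action now yields, and is yielded by, clause~(3) of the Corollary.

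The one non-routine ingredient is the action-version of Whyte's theorem, with sufficient uniformity. The orbit-wise application of Theorem~\ref{theorem:whyte} is standard once the orbit space has been identified, but some care is needed to produce a \emph{single} finite subset of $H$ (respectively, of $\Gamma_{U}(G)$) witnessing the wobbling property for the two generators of $F_{2}$ simultaneously on every orbit. This uniformity is essentially automatic in our setting, because the non-amenability supplied by Theorem~\ref{theorem:approximating.actions} is itself uniform in $\alpha \in \mathscr{N}_{G}(U)$ through the right-uniformity of $G$ encoded in the identity neighbourhood $U$. Once this uniform orbit-wise version of Theorem~\ref{theorem:whyte} is in place, the remainder of the argument is pure bookkeeping with Theorem~\ref{theorem:approximating.actions}.
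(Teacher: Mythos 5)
Your proposal takes essentially the same route as the paper: the paper's entire proof consists of observing that Whyte's argument for Theorem~\ref{theorem:whyte} generalizes to arbitrary group actions --- $(H,X)$ is non-amenable if and only if $F_{2}$ embeds as a semi-regular subgroup of $W(H,X)$ --- and then combining this with Theorem~\ref{theorem:approximating.actions}, which is exactly your bookkeeping with the negations of clauses~(2) and~(3). Your extra detail on the orbit-wise reduction is fine in outline, but the uniformity of the finite witnessing set across orbits is not supplied by ``uniformity in $\alpha$ through $U$'' (that concerns a different issue, namely (2) versus (3)); it comes from Rosenblatt's criterion (Theorem~\ref{theorem:amenable.actions}), which for a non-amenable action $(H,X)$ yields a single pair $(E,\theta)$ with $\vert EF \vert \geq \theta \vert F \vert$ for \emph{all} finite non-empty $F \subseteq X$, hence simultaneously on every orbit, after which Whyte's Hall-type construction runs with this one $E$. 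Also note that on a single orbit the action need not be free, so one applies the bounded-degree-graph (Schreier graph) version of Whyte's argument rather than Theorem~\ref{theorem:whyte} verbatim.
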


\begin{proof} It is not difficult to see that Whyte's proof of Theorem~\ref{theorem:whyte} works just as well for group actions: a group action $(G,X)$ is non-amenable if and only if the free group $F_{2}$ is isomorphic to a semi-regular subgroup of $W(G,X)$. This observation combined with our Theorem~\ref{theorem:approximating.actions} immediately implies the desired corollary. \end{proof}

\begin{remark}\label{remark:von.neumann} Let $G$ be a non-amenable Hausdorff topological group. As due to~(3) in Corollary~\ref{corollary:von.neumann}, there exists $U \in \mathscr{U}_{e}(G)$ such that $F_{2}$ is isomorphic to a semi-regular subgroup of $W_{U}(G)$, i.e., there exist $g_{1},\ldots ,g_{m},h_{1},\ldots ,h_{n} \in F_{G}$ such that, for every $\alpha \in \mathscr{N}_{G}(U)$, there exist partitions $G = \bigcupdot_{i=1}^{m} A_{i} = \bigcupdot_{j=1}^{n} B_{j}$ so that the maps \begin{displaymath}
	\coprod_{i=1}^{m} \alpha^{\ast}(g_{i})\vert_{A_{i}} \colon G \to G , \qquad \coprod_{j=1}^{n} \alpha^{\ast}(h_{j})\vert_{B_{j}} \colon G \to G
\end{displaymath} are elements of $\mathrm{Sym} (G)$ generating a semi-regular subgroup being isomorphic to $F_{2}$. That is to say, similarly to the situation with Theorem~\ref{theorem:approximating.actions} explained in Remark~\ref{remark:paradoxical}, statement~(3) of Corollary~\ref{corollary:von.neumann} constitutes a uniform version of statement~(2). \end{remark}

It is well known that containment of a discrete subgroup being isomorphic to $F_{2}$ does not prevent a general (i.e., not necessarily locally compact) topological group from being amenable: among the most prominent examples of amenable topological groups admitting discrete free subgroups are the full symmetric group of any infinite set with the topology of point-wise convergence, the unitary group of any infinite-dimensional Hilbert space equipped with the strong operator topology (which is even extremely amenable by a famous result of Gromov and Milman~\cite{GromovMilman}), as well as the automorphism group $\Aut (\mathbb{Q},{<})$ with the topology of point-wise convergence (which Pestov proved both to be extremely amenable and to contain a discrete copy of $F_{2}$~\cite{pestov}). Recently, the first example of an extremely amenable Polish group admitting a complete bi-invariant metric and containing an (even maximally) discrete free subgroup was given by Carderi and the second author~\cite{CarderiThom}.

In particular, amenability of a topological group is not necessarily inherited by its closed subgroups. Of course, this is very different to the discrete case where amenability is passed on to subgroups. However, Corollary~\ref{corollary:von.neumann} suggests another perspective on this inheritance problem, which we record in Corollary~\ref{corollary:strong.subgroups}. The proof of Corollary~\ref{corollary:strong.subgroups} is a straightforward combination of Theorem~\ref{theorem:approximating.actions} and the following well-known fact.

\begin{lem}\label{lemma:wobbling} Consider a group $G$ acting on a non-empty set $X$. A mean $\mu \colon \ell^{\infty}(X) \to \mathbb{R}$ is $G$-invariant if and only if $\mu$ is $W(G,X)$-invariant. \end{lem}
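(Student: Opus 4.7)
The plan is to establish the two implications separately, with the nontrivial direction reducing a wobbling permutation to a finite piecewise combination of $G$-translations.

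The easy direction is that $W(G,X)$-invariance forces $G$-invariance. For each $g \in G$, the map $\sigma_g \colon X \to X, \, x \mapsto gx$ is a permutation whose graph lies inside $\{(x,hx) \mid x \in X, \, h \in E\}$ for $E \defeq \{g\}$, so $\sigma_g \in W(G,X)$. Therefore $W(G,X)$-invariance of $\mu$ specialises to $\mu(f \circ \sigma_g) = \mu(f)$ for every $g \in G$ and every $f \in \ell^{\infty}(X)$, which is precisely $G$-invariance.

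The substantive direction is that $G$-invariance implies $W(G,X)$-invariance. Fix a $G$-invariant mean $\mu$ and a wobbling permutation $\gamma \in W(G,X)$, together with a finite witness $E = \{g_{1}, \ldots, g_{n}\} \subseteq G$ such that $\gr(\gamma) \subseteq \{(x, g_{i}x) \mid x \in X, \, i \in \{1,\ldots,n\}\}$. First I would partition $X$ as $X = A_{1} \cupdot \cdots \cupdot A_{n}$ by defining
\[
    A_{i} \defeq \{x \in X \mid \gamma(x) = g_{i}x\} \setminus \bigcup_{j<i} A_{j},
\]
so that $\gamma(x) = g_{i}x$ for every $x \in A_{i}$. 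Because $\gamma$ is a bijection of $X$, the image sets $g_{i}A_{i}$ are pairwise disjoint and their union is $X$; hence $\sum_{i=1}^{n} \chi_{g_{i}A_{i}} = \mathbf{1}$.

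The key algebraic identity is then
\[
    f \circ \gamma \, = \, \sum_{i=1}^{n} (f \cdot \chi_{g_{i}A_{i}}) \circ \sigma_{g_{i}} \qquad (f \in \ell^{\infty}(X)),
\]
which I would verify pointwise: for $x \in A_{i}$, the $i$-th summand evaluates to $f(g_{i}x)\chi_{g_{i}A_{i}}(g_{i}x) = f(g_{i}x) = f(\gamma(x))$, while every other summand vanishes at $x$. Applying $\mu$ to both sides and using its linearity together with $G$-invariance on each summand yields
\[
    \mu(f \circ \gamma) \, = \, \sum_{i=1}^{n} \mu(f \cdot \chi_{g_{i}A_{i}}) \, = \, \mu\!\left(\sum_{i=1}^{n} f \cdot \chi_{g_{i}A_{i}}\right) \, = \, \mu(f).
\]
Since $W(G,X)$ is generated (indeed exhausted) by such $\gamma$, this gives $W(G,X)$-invariance of $\mu$. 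The only mildly delicate point is verifying that the finite $E$ can be assumed to yield a genuine partition of $X$ via the $A_{i}$, but this is automatic from the graph containment once one removes duplicates greedily, so no real obstacle remains.
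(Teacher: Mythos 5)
Your proof is correct and follows essentially the same route as the paper: both arguments decompose $X$ into finitely many pieces on which $\gamma$ agrees with a single translation $\sigma_{g_i}$, use bijectivity of $\gamma$ to see that the images $g_iA_i$ again partition $X$, and then apply linearity and $G$-invariance of $\mu$ termwise. Nothing further is needed.
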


\begin{proof} Clearly, $W(G,X)$-invariance implies $G$-invariance. To prove the converse, suppose that $\mu$ is $G$-invariant. Let $h \in W(G,X)$. Then there is a finite partition $\mathscr{P}$ of $G$ along with $(g_{P})_{P \in \mathscr{P}} \in G^{\mathscr{P}}$ such that $h\vert_{P} = g_{P}\vert_{P}$ for each $P \in \mathscr{P}$. For $f \in \ell^{\infty}(X)$, it follows that \begin{align*}
	\mu (f \circ h) \, &= \sum_{P \in \mathscr{P}} \mu \left( (f \circ h) \cdot \mathbf{1}_{P} \right) = \sum_{P \in \mathscr{P}} \mu \left( (f \circ g_{P}) \cdot \mathbf{1}_{P} \right) = \sum_{P \in \mathscr{P}} \mu \left( f \cdot \left(\mathbf{1}_{P} \circ g_{P}^{-1} \right)\right) \\
	&= \sum_{P \in \mathscr{P}} \mu \left( f \cdot (\mathbf{1}_{g_{P} P})\right) = \sum_{P \in \mathscr{P}} \mu \left( f \cdot \mathbf{1}_{\alpha(P)} \right) = \mu (f) .\qedhere
\end{align*} \end{proof}

Since any subgroup of a group $G$ embeds into $W(G)$ as a semi-regular subgroup, the hypothesis of the following corollary is satisfied for discrete (amenable) groups.

\begin{cor}\label{corollary:strong.subgroups} Let $G$ be an amenable Hausdorff topological group and let $H$ be any group. Suppose that there exists $U \in \mathscr{U}_{e}(G)$ such that, for every $\alpha \in \mathscr{N}_{G}(U)$, the group $H$ is isomorphic to a semi-regular subgroup of $W(\alpha)$ (which is true, e.g., if $H$ is isomorphic to a semi-regular subgroup of $W_{U}(G)$). Then $H$ is amenable. \end{cor}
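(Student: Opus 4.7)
The plan is to chain together three facts already available in the paper: Theorem~\ref{theorem:approximating.actions}, Lemma~\ref{lemma:wobbling}, and a standard transfer principle for free amenable actions.

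First I would apply the implication (1)$\Longrightarrow$(2) of Theorem~\ref{theorem:approximating.actions} to the identity neighborhood $U$ furnished by the hypothesis: amenability of $G$ yields some $\alpha \in \mathscr{N}_{G}(U)$ for which the action of $\Gamma(\alpha)$ on $G$ is amenable, so that $\ell^{\infty}(G)$ carries a $\Gamma(\alpha)$-invariant mean $\mu$. Lemma~\ref{lemma:wobbling} then upgrades the invariance of $\mu$ from $\Gamma(\alpha)$ to the entire wobbling group $W(\alpha) = W(\Gamma(\alpha),G)$. The hypothesis, applied precisely to this $\alpha$, embeds $H$ as a semi-regular subgroup of $W(\alpha)$; regarding $H$ as a subgroup of $\mathrm{Sym}(G)$ through this embedding, the resulting action of $H$ on $G$ is free and leaves $\mu$ invariant.

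It remains to push $\mu$ down to a left-invariant mean on $\ell^{\infty}(H)$ by the standard transversal trick. Using the axiom of choice, fix a transversal $T \subseteq G$ for the (free) $H$-orbits on $G$, so that the map $H \times T \to G,\, (h,t) \mapsto h \cdot t$ is a bijection. Define $\Phi \colon \ell^{\infty}(H) \to \ell^{\infty}(G)$ by $\Phi(f)(h \cdot t) \defeq f(h)$ and set $\nu \defeq \mu \circ \Phi$. A direct calculation gives $\Phi(f \circ \lambda_{h_{0}}) = \Phi(f) \circ \tilde{h}_{0}$ for every $h_{0} \in H$, where $\tilde{h}_{0}$ denotes the image of $h_{0}$ in $\mathrm{Sym}(G)$. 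Since $\mu$ is $H$-invariant, this forces $\nu(f \circ \lambda_{h_{0}}) = \nu(f)$, so $\nu$ is a left-invariant mean witnessing the amenability of $H$.

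There is no substantial obstacle in the argument; the only real content is the combination of Theorem~\ref{theorem:approximating.actions} with Lemma~\ref{lemma:wobbling}, everything else being routine. The parenthetical remark in the statement is handled separately by observing that each element of $\Gamma_{U}(G) \leq \mathrm{Sym}(G \times \mathscr{N}_{G}(U))$ preserves every fiber $G \times \{\beta\}$; restriction to a single fiber $G \times \{\alpha\}$ therefore maps $W_{U}(G)$ into $W(\alpha)$ and preserves semi-regularity of any subgroup, reducing the stronger hypothesis to the one actually used above.
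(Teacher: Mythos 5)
Your argument is correct and coincides with the paper's own proof: both combine Theorem~\ref{theorem:approximating.actions} with Lemma~\ref{lemma:wobbling} and then push the $W(\alpha)$-invariant mean down to $H$ via the freeness of the semi-regular action (your transversal map $\Phi$ is exactly the paper's equivariant projection $\psi \colon G \to H$ with $\nu(f) = \mu(f \circ \psi)$). Your verification of the parenthetical reduction from $W_{U}(G)$ to $W(\alpha)$ by restricting to a fiber is also sound, and is a detail the paper leaves implicit.
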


\begin{proof} By Theorem~\ref{theorem:approximating.actions}, there exists $\alpha \in \mathscr{N}_{G}(U)$ such that the action of $\Gamma (\alpha)$ on $G$ is amenable. Let $\mu \colon \ell^{\infty}(G) \to \mathbb{R}$ be an $\Gamma (\alpha)$-invariant mean. By Lemma~\ref{lemma:wobbling}, it follows that $\mu$ is $W(\alpha)$-invariant. By assumption, there is an embedding $\phi \colon H \to W(\alpha)$ such that $\phi (H)$ is semi-regular. Hence, there exists a map $\psi \colon G \to H$ such that $\psi (\phi (h)(x)) = h\psi (x)$ for all $h \in H$ and $x \in G$. Now, define $\nu \colon \ell^{\infty}(H) \to \mathbb{R}$ by \begin{displaymath}
	\nu (f) \defeq \mu (f \circ \psi)  \qquad (f \in \ell^{\infty}(H)) .
\end{displaymath} It is easy to see that $\nu$ is a mean. Furthermore, the $W(\alpha)$-invariance of $\mu$ implies that \begin{displaymath}
	\nu (f \circ \lambda_{h}) = \mu (f \circ \lambda_{h} \circ \psi) = \mu (f \circ \psi \circ \phi (h)) = \mu (f \circ \psi) = \nu (f)
\end{displaymath} for all $f \in \ell^{\infty}(H)$ and $h \in H$. Thus, $\nu$ is $H$-invariant, as desired. \end{proof}

The remainder of this section will be devoted to refining the previous constructions for Polish groups to incorporate the results of Marks and Unger~\cite{MarksUnger} and in turn provide measurable versions of Theorem~\ref{theorem:approximating.actions}, Remark~\ref{remark:paradoxical}, and Corollary~\ref{corollary:von.neumann}. Evidently, if a topological group $G$ is a metrizable, then so is the topology of uniform convergence on $\mathrm{Sym}(G)^{G}$, and therefore it suffices to consider approximations of the left translation action by sequences. Moreover, in case $G$ is separable, the following observation allows us to concentrate on Borel measurable approximations.

\begin{prop}\label{proposition:separable.groups} Let $G$ be an amenable, separable Hausdorff topological group $G$, let $H$ be a dense countable subgroup of $G$, and let $U \in \mathscr{U}_{e}(G)$. There is $\alpha \in \mathscr{N}_{G}(U)$ such that \begin{enumerate}
	\item[$\bullet$] $\alpha$ has countable image and is Borel measurable,
	\item[$\bullet$] $H$ is $\Gamma (\alpha)$-invariant and $\Gamma (\alpha)$ acts amenably on the set $H$,
	\item[$\bullet$] $\Gamma (\alpha)$ acts on $G\setminus H$ by left $H$-translations.
\end{enumerate} In particular, $\Gamma (\alpha)$ acts on $G$ by Borel automorphisms, and $G \times G \to G , \, (g,h) \mapsto \alpha (g)(h)$ is Borel measurable. \end{prop}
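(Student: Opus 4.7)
The plan is to factor the required $\alpha$ through a Borel measurable rounding $s \colon G \to H$ and to rerun the Zorn construction from the proof of Theorem~\ref{theorem:approximating.actions}(1)$\Longrightarrow$(2) so that all F\o lner data lie inside the countable dense subgroup $H$. First I choose a symmetric open $V \in \mathscr{U}_{e}(G)$ with $V^{2} \subseteq U$, fix an enumeration $H = \{ h_{n} \mid n \in \mathbb{N} \}$, and set $A_{n} \defeq Vh_{n} \setminus (H \cup \bigcup_{m<n} Vh_{m})$. Density of $H$ in $G$ together with openness of each $Vh_{n}$ yields $G = H \cup \bigcupdot_{n} A_{n}$, so the rule $s(g) \defeq h_{n}$ on $A_{n}$ and $s(h) \defeq h$ on $H$ defines a Borel map $s \colon G \to H$ with $s(g) \in Vg$ for every $g \in G$ and $s|_{H} = \id_{H}$.

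Next I rerun the Zorn construction of Theorem~\ref{theorem:approximating.actions}(1)$\Longrightarrow$(2), but indexed by $I = \{ (E,n) \in \mathscr{F}(H) \times (\mathbb{N}\setminus \{ 0 \}) \mid e \in E \}$ and with the insistence that $F_{E,n} \subseteq H$, $D_{E,n} \subseteq F_{E,n}$, and $\phi_{E,n,h}(D_{E,n}) \subseteq H$, while $\phi_{E,n,h}(x) \in Vx$ for all $x \in D_{E,n}$. This requires re-examining Lemma~\ref{lemma:nice.folner.sets}, Lemma~\ref{lemma:approximation.by.moving.sets}, and Theorem~\ref{theorem:topological.folner} in the presence of the dense subgroup $H$: every non-empty $V$-neighborhood of a point of $G$ contains infinitely many elements of $H$, which is exactly the input used in the proofs of Lemma~\ref{lemma:density} and Lemma~\ref{lemma:approximation.by.moving.sets} to keep the F\o lner sets and the moving injections inside $H$.

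With this data in hand, I define $\psi(h) \in \mathrm{Sym}(G)$ for $h \in H$ as the involution from the proof of Theorem~\ref{theorem:approximating.actions} (swapping $D_{E,n}$ with $\phi_{E,n,h}(D_{E,n})$ for $(E,n) \in I$ with $h \in E$), extended by the identity on $G \setminus H$, and put $\alpha(g) \defeq \psi(s(g)) \circ \lambda_{s(g)}$ for $g \in G$. Membership $\alpha \in \mathscr{N}_{G}(U)$ follows from $\alpha(g)(x) \in Vs(g)x \subseteq V^{2}gx \subseteq Ugx$. Since $s|_H = \id_H$ gives $s(G) = H$ and $\alpha(g) = \alpha(s(g))$ for every $g \in G$, the image of $\alpha$ is the countable set $\{\psi(h) \circ \lambda_{h} \mid h \in H\}$, and $\alpha$ is Borel as the composition of the Borel map $s$ with a function on the countable set $H$. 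Since each $\psi(h)$ fixes $G \setminus H$ pointwise, every $\alpha(g)$ preserves $H$, and on $G \setminus H$ it acts as $\lambda_{s(g)}|_{G\setminus H}$, i.e.\ by left $H$-translation. Amenability of the $\Gamma(\alpha)$-action on $H$ then follows verbatim from the original argument, because the generating set $\{\alpha(g)|_{H} \mid g \in G\} = \{\psi(h) \circ \lambda_{h}|_{H} \mid h \in H\}$ is the same as before and the F\o lner sets $F_{E,n}$ already lie in $H$.

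The main obstacle is the bookkeeping in the second step: carefully verifying that every invocation of Theorem~\ref{theorem:topological.folner}, Lemma~\ref{lemma:nice.folner.sets}, and Lemma~\ref{lemma:approximation.by.moving.sets} can be carried out so as to keep all finite sets and injections inside $H$. Once this localization is established, the concluding clauses are immediate: each $\alpha(g)$ is a Borel automorphism of $G$ (as a composition of the homeomorphism $\lambda_{s(g)}$ and the Borel involution $\psi(s(g))$, which only non-trivially permutes the countable Borel set $H$), whence so is every element of $\Gamma(\alpha)$; and the map $(g,h) \mapsto \alpha(g)(h) = \alpha(s(g))(h)$ is Borel, since $s$ is Borel, the image $\alpha(H)$ is countable, and $x \mapsto \alpha(h')(x)$ is Borel for each $h' \in H$.
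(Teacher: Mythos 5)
Your proposal is correct, and its endgame coincides with the paper's: a Borel rounding map $G \to H$ landing in the countable dense subgroup, a countable family of permutations indexed by $H$ that preserve $H$, act amenably on it, and act by left $H$-translations on $G \setminus H$, then $\alpha \defeq (\text{family}) \circ (\text{rounding})$. Where you genuinely diverge is in how that countable family is produced. The paper observes that $H$, being dense, is itself an amenable Hausdorff topological group in its subspace topology, applies Theorem~\ref{theorem:approximating.actions} to $H$ with the identity neighbourhood $V \cap H$ to obtain $\beta \colon H \to \mathrm{Sym}(H)$ as a black box, and extends each $\beta(h)$ by $x \mapsto hx$ off $H$; countability of $H$ then makes every measurability claim automatic. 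You instead re-run the Zorn/F\o lner construction inside $G$ with all finite sets confined to $H$. This does work: the only input that the proofs of Lemma~\ref{lemma:density}, Lemma~\ref{lemma:approximation.by.moving.sets} and Lemma~\ref{lemma:nice.folner.sets} need is that certain nonempty open sets are infinite, and a dense subgroup of a non-discrete Hausdorff group meets every nonempty open set in an infinite set; moreover, with your countable index set $I \subseteq \mathscr{F}(H) \times \mathbb{N}$ the set to be avoided when choosing the translate $z$ is finite, so $z$ can be taken in $H$. But you assert rather than carry out this localization, which is the bulk of the work on your route, and you should also dispose of the degenerate case where $G$ is discrete (then $H = G$ and the statement is immediate). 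What your route buys is that it avoids the transfer of amenability between $G$ and its dense subgroup $H$, which the paper uses without proof; what the paper's route buys is that no earlier proof needs to be reopened. Your explicit construction of the rounding map $s$ via the Borel partition $A_{n} = Vh_{n} \setminus (H \cup \bigcup_{m<n} Vh_{m})$ is a concrete instance of the paper's ``straightforward recursion'' and is fine as written.
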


\begin{proof} Fix an open identity neighborhood $V \in \mathscr{U}_{e}(G)$ such that $V^{2} \subseteq U$. By density, the amenability of $G$ is equivalent to the amenability of $H$ (carrying the respective subspace topology). Hence, Theorem~\ref{theorem:approximating.actions} implies the existence of a map $\beta \colon H \to \mathrm{Sym}(H)$ such that \begin{enumerate}
	\item[$\bullet$] $\Gamma (\beta)$ acts amenably on the set $H$,
	\item[$\bullet$] $\beta(g)(h) \in Vgh$ for all $g,h \in H$.
\end{enumerate} Extend $\beta$ to a map $\beta' \colon H \to \mathrm{Sym}(G)$ by setting \begin{displaymath}
	\beta' (h)(x) \defeq \begin{cases}
		\beta (h)(x) & \text{if } x \in H , \\
		hx & \text{otherwise}
	\end{cases} \qquad (h \in H, \, x \in G) .
\end{displaymath} Evidently, $H$ is $\Gamma (\beta')$-invariant, and $\Gamma (\beta')$ acts amenably on $H$ and acts by left $H$-translations on $G\setminus H$. As $H$ is countable, $\beta' (h)$ is an automorphism of the Borel space $G$ for each $h \in H$, and it moreover follows that $H \times G \to G , \, (h,x) \mapsto \beta'(h)(x)$ is Borel measurable. Due to $H$ being countable and dense in $G$, a straightforward recursion also provides us with a Borel measurable map $\pi \colon G \to H$ such that $\pi (x) \in Vx$ for all $x \in G$. Hence, the resulting maps $\alpha \defeq \beta' \circ \pi \colon G \to \mathrm{Sym}(G)$ and $G \times G \to G, \, (g,h) \mapsto \alpha(g)(h)$ are Borel measurable, too. Clearly, we are only left to note that $\alpha$ is a member of $\mathscr{N}_{G}(U)$: indeed, if $g,h \in G$, then \begin{displaymath}
	\alpha(g)(h) = \beta'(\pi (g))(h) \in V\pi (g)h \subseteq V^{2} gh \subseteq Ugh . \qedhere
\end{displaymath} \end{proof}

In view of Proposition~\ref{proposition:separable.groups}, we introduce another bit of notation: for a Polish group $G$ and $U \in \mathscr{U}_{e}(G)$, we denote by $\mathscr{M}_{G}(U)$ the set of those $\alpha \in \mathscr{N}_{G}(U)$ where \begin{enumerate}
	\item[$\bullet$] $\alpha$ has countable image and is Borel measurable, and
	\item[$\bullet$] for each $g \in G$, the map $\alpha (g)$ is a Borel automorphism of $G$.
\end{enumerate} Building on work of Marks and Unger~\cite{MarksUnger}, we obtain the subsequent two corollaries.

\begin{cor}\label{corollary:baire.paradoxical} Let $G$ be a Polish group. The following are equivalent. \begin{enumerate}
	\item[$(1)$] $G$ is not amenable.
	\item[$(2)$] There exists $U \in \mathscr{U}_{e}(G)$ such that, for every $\alpha \in \mathscr{M}_{G}(U)$, the Polish space $G$ admits a $\Gamma (\alpha)$-paradoxical decomposition where each piece has the Baire property.
	\item[$(3)$] There exists $U \in \mathscr{U}_{e}(G)$ such that, for every $(\alpha_{n})_{n \in \mathbb{N}} \in \mathscr{M}_{G}(U)^{\mathbb{N}}$, the Polish space $G \times \mathbb{N}$ admits a $\Gamma (\bigoplus_{n \in \mathbb{N}}\alpha_{n})$-paradoxical decomposition where each piece has the Baire property.
\end{enumerate} \end{cor}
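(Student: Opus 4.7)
The plan is to prove $(1) \Leftrightarrow (2)$ and $(1) \Leftrightarrow (3)$ separately. The main external input is the Marks-Unger theorem from~\cite{MarksUnger}, which states that a Borel action of a countable group on a Polish space admits no Baire-measurable paradoxical decomposition if and only if it preserves a Borel probability measure; combined with Tarski's alternative, this lets one pass freely between Baire paradoxicality and non-amenability of the action. From earlier in the paper we will use Theorem~\ref{theorem:approximating.actions} (translating topological amenability of $G$ into amenability of $\Gamma(\alpha) \curvearrowright G$ for perturbations $\alpha$) and Proposition~\ref{proposition:separable.groups} (regularizing such perturbations into Borel ones so that Marks-Unger becomes applicable).

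For $(1) \Rightarrow (2)$, Theorem~\ref{theorem:approximating.actions} yields $U \in \mathscr{U}_e(G)$ such that $\Gamma(\alpha) \curvearrowright G$ is non-amenable for every $\alpha \in \mathscr{N}_G(U)$; for $\alpha \in \mathscr{M}_G(U)$ the group $\Gamma(\alpha)$ is countable and acts by Borel automorphisms, so the Marks-Unger theorem furnishes the desired Baire paradoxical decomposition of $G$. For $(2) \Rightarrow (1)$, fix $U$ witnessing~(2) and suppose for contradiction that $G$ is amenable; Proposition~\ref{proposition:separable.groups} delivers $\alpha \in \mathscr{M}_G(U)$ for which $\Gamma(\alpha)$ acts amenably on the $\Gamma(\alpha)$-invariant dense countable subgroup $H$, and pulling a $\Gamma(\alpha)$-invariant mean back along $f \mapsto f|_{H}$ produces a $\Gamma(\alpha)$-invariant mean on $\ell^{\infty}(G)$, which by Tarski's alternative forbids the $\Gamma(\alpha)$-paradoxical decomposition promised by~(2).

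The direction $(3) \Rightarrow (1)$ is analogous: starting from $\alpha \in \mathscr{M}_G(U)$ as above together with a $\Gamma(\alpha)$-invariant mean $\mu$ on $\ell^{\infty}(G)$, set $\alpha_{n} \defeq \alpha$ and define $\mu' \in \ell^{\infty}(G \times \mathbb{N})'$ by $\mu'(f) \defeq \lim_{n \to \mathscr{V}} \mu(f(\,\cdot\,, n))$ for some free ultrafilter $\mathscr{V}$ on $\mathbb{N}$. Since every generator $(\bigoplus_{n}\alpha)(g)$ of $\Gamma(\bigoplus_{n}\alpha)$ restricts to $\alpha(g)$ on each fibre $G \times \{n\}$, the $\Gamma(\alpha)$-invariance of $\mu$ upgrades $\mu'$ to a $\Gamma(\bigoplus_{n}\alpha)$-invariant mean, contradicting~(3) applied to the constant sequence.

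The hard direction is $(1) \Rightarrow (3)$, for which I would argue contrapositively. If~(3) fails, then for every $U \in \mathscr{U}_e(G)$ there exists $(\alpha_{n}^{U})_{n} \in \mathscr{M}_G(U)^{\mathbb{N}}$ whose diagonal action on the Polish space $G \times \mathbb{N}$ admits no Baire paradoxical decomposition, so Marks-Unger supplies an invariant Borel probability measure and hence a $\Gamma(\bigoplus_{n}\alpha_{n}^{U})$-invariant mean $\mu_{U}$ on $\ell^{\infty}(G \times \mathbb{N})$. Choose a symmetric neighborhood basis $(U_{k})_{k \in \mathbb{N}}$ at $e$ with $\bigcap_{k} U_{k} = \{e\}$, and define $\nu_{k} \in \ell^{\infty}(G)'$ by $\nu_{k}(f) \defeq \mu_{U_{k}}(f \circ \pi_{1})$, where $\pi_{1} \colon G \times \mathbb{N} \to G$ is the first projection. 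Using $\alpha_{n}^{U_{k}}(g)(x) \in U_{k}gx$ together with the $\Gamma(\bigoplus_{n}\alpha_{n}^{U_{k}})$-invariance of $\mu_{U_{k}}$, the difference $\nu_{k}(f \circ \lambda_{g}) - \nu_{k}(f)$ is bounded in absolute value by the oscillation of $f$ over $U_{k}$-neighborhoods; for $f \in \mathrm{RUC}_{b}(G)$ this tends to $0$ as $k \to \infty$, so any weak-$*$ cluster point of $(\nu_{k})_{k}$ restricts to a left-invariant mean on $\mathrm{RUC}_{b}(G)$, forcing $G$ to be amenable. The main obstacle is precisely this step: non-amenability of each fibrewise action $\Gamma(\alpha_{n}) \curvearrowright G$ does not in any obvious way imply non-amenability of the diagonal action $\Gamma(\bigoplus_{n}\alpha_{n}) \curvearrowright G \times \mathbb{N}$, since a hypothetical invariant mean on the product could satisfy $\mu(\mathbf{1}_{G \times \{n\}}) = 0$ for every $n$; letting $U$ shrink along a neighborhood basis and extracting a weak-$*$ limit is what converts the approximate fibrewise invariance into honest left-invariance on $G$.
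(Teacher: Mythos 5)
Your proposal follows the same route as the paper, whose proof of this corollary consists precisely of combining Theorem~\ref{theorem:approximating.actions}, Proposition~\ref{proposition:separable.groups}, and Theorem~1.1 of~\cite{MarksUnger}; your quantifier bookkeeping, the pull-back of the invariant mean from the invariant dense countable subgroup $H$ to $G$ in $(2)\Rightarrow(1)$, and the shrinking-neighbourhood/weak-$*$ limit argument for $(1)\Rightarrow(3)$ are all sound.

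The one thing you must fix is your statement of the Marks--Unger theorem. It does not assert that the absence of a Baire measurable paradoxical decomposition is equivalent to the existence of an invariant \emph{Borel probability measure}; that equivalence is false (the translation action of $\mathbb{Z}$ on itself is non-paradoxical, yet preserves no Borel probability measure). Theorem~1.1 of~\cite{MarksUnger} says that a Borel action on a Polish space which admits a paradoxical decomposition admits one whose pieces have the Baire property. The correct chain in your $(1)\Rightarrow(3)$ step is therefore: no Baire measurable paradoxical decomposition $\Rightarrow$ (Marks--Unger) no paradoxical decomposition at all $\Rightarrow$ (Tarski's alternative, as in~\cite{ParadoxicalDecompositions}) a $\Gamma(\bigoplus_{n}\alpha_{n}^{U})$-invariant mean on $\ell^{\infty}(G\times\mathbb{N})$. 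Since the only object you use downstream is this invariant mean, your argument survives the correction unchanged. Two smaller remarks: $\Gamma(\bigoplus_{n}\alpha_{n})$ need not be countable (the image of $g\mapsto(\alpha_{n}(g))_{n}$ lies in a countable product of countable sets), but this is harmless because a paradoxical decomposition involves only finitely many group elements, so one may pass to the countable subgroup they generate before invoking Marks--Unger; and for $(1)\Rightarrow(3)$ one can avoid your weak-$*$ limit argument altogether by using condition~(3) of Theorem~\ref{theorem:approximating.actions} together with the equivariant map $G\times\mathbb{N}\to G\times\mathscr{N}_{G}(U)$, $(x,n)\mapsto(x,\alpha_{n})$ --- your direct argument is correct, but it essentially reproves the implication $(3)\Rightarrow(1)$ of that theorem.
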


\begin{proof} This follows from Theorem~\ref{theorem:approximating.actions}, Proposition~\ref{proposition:separable.groups}, and~\cite[Theorem~1.1]{MarksUnger}. \end{proof}
	
Recall that a map $f \colon X \to Y$ between topological spaces $X$ and $Y$ is \emph{Baire measurable} if the preimage $f^{-1}(B)$ has the Baire property for every Borel set $B \subseteq Y$.
	
\begin{cor}\label{corollary:baire.free} Let $G$ be a Polish group. The following are equivalent. \begin{enumerate}
	\item[$(1)$] $G$ is not amenable.
	\item[$(2)$] There is $U \in \mathscr{U}_{e}(G)$ such that, for every $\alpha \in \mathscr{M}_{G}(U)$, the free group $F_{2}$ is isomorphic to a semi-regular subgroup of Baire measurable elements of $W(\alpha)$.
	\item[$(3)$] There is $U \in \mathscr{U}_{e}(G)$ such that, for every $(\alpha_{n})_{n \in \mathbb{N}} \in \mathscr{M}_{G}(U)^{\mathbb{N}}$, the free group $F_{2}$ is isomorphic to a semi-regular subgroup of Baire measurable elements of $W(\bigoplus_{n \in \mathbb{N}} \alpha_{n})$.
\end{enumerate} \end{cor}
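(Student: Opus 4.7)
The plan is to prove this corollary as the Baire-measurable refinement of Corollary~\ref{corollary:von.neumann}, in the same way that Corollary~\ref{corollary:baire.paradoxical} is the Baire-measurable analogue of Remark~\ref{remark:paradoxical}. The decisive input will be a Baire-measurable strengthening of the extension of Whyte's theorem to group actions that is already invoked in the proof of Corollary~\ref{corollary:von.neumann}: for every countable group $\Gamma$ acting by Borel automorphisms on a Polish space $X$, the action $\Gamma \curvearrowright X$ is non-amenable if and only if $F_{2}$ is isomorphic to a semi-regular subgroup of the Baire-measurable elements of $W(\Gamma, X)$. The ``only if'' direction is the Whyte-type upgrade of the Marks-Unger Baire-category matching technology used in the proof of Corollary~\ref{corollary:baire.paradoxical}; the ``if'' direction is immediate from Lemma~\ref{lemma:wobbling}, since any semi-regular copy of $F_{2}$ inside $W(\Gamma, X)$ produces a $W(\Gamma, X)$-paradoxical decomposition of $X$, which in turn precludes any $\Gamma$-invariant mean on $\ell^{\infty}(X)$.

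For $(1) \Rightarrow (2)$, Theorem~\ref{theorem:approximating.actions} supplies $U \in \mathscr{U}_{e}(G)$ such that $\Gamma(\alpha) \curvearrowright G$ is non-amenable for every $\alpha \in \mathscr{N}_{G}(U)$, and in particular for every $\alpha \in \mathscr{M}_{G}(U)$. For such an $\alpha$, $\Gamma(\alpha)$ is a countable group of Borel automorphisms of the Polish space $G$, so the Baire-measurable Whyte statement above applies directly. For $(1) \Rightarrow (3)$, using the same $U$ and any sequence $(\alpha_{n})_{n \in \mathbb{N}} \in \mathscr{M}_{G}(U)^{\mathbb{N}}$, observe that $\Gamma(\bigoplus_{n} \alpha_{n})$ preserves the decomposition $G \times \mathbb{N} = \bigsqcup_{n} G \times \{n\}$ and its restriction to $G \times \{0\}$ recovers the non-amenable action of $\Gamma(\alpha_{0})$; hence $\Gamma(\bigoplus_{n} \alpha_{n}) \curvearrowright G \times \mathbb{N}$ is non-amenable as well, and the Baire-measurable Whyte statement again applies, now on the Polish space $G \times \mathbb{N}$.

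For $(2) \Rightarrow (1)$, contrapositively assume $G$ is amenable; given $U \in \mathscr{U}_{e}(G)$, Proposition~\ref{proposition:separable.groups} furnishes some $\alpha \in \mathscr{M}_{G}(U)$ for which $\Gamma(\alpha) \curvearrowright G$ is amenable, and Lemma~\ref{lemma:wobbling} propagates the invariant mean to all of $W(\alpha)$, ruling out any semi-regular copy of $F_{2}$ in $W(\alpha)$ (Baire measurable or not). For $(3) \Rightarrow (1)$, choose the same $\alpha$ and set $\alpha_{n} \defeq \alpha$ for every $n \in \mathbb{N}$; averaging a $\Gamma(\alpha)$-invariant mean $\mu$ on $\ell^{\infty}(G)$ by $f \mapsto \sum_{n \in \mathbb{N}} 2^{-n} \mu(f(\,\cdot\,, n))$ produces a $\Gamma(\bigoplus_{n} \alpha)$-invariant mean on $\ell^{\infty}(G \times \mathbb{N})$, which by Lemma~\ref{lemma:wobbling} again rules out any semi-regular copy of $F_{2}$ in $W(\bigoplus_{n} \alpha)$.

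The main obstacle is isolating the precise Baire-measurable version of Whyte's theorem for countable Borel actions on Polish spaces. Whyte's original matching-theoretic argument constructs the generators of $F_{2}$ as explicit wobbling elements, and upgrading this construction to yield Baire-measurable wobblings requires the descriptive-set-theoretic refinement of Hall's marriage theorem developed by Marks and Unger for their Baire-measurable paradoxical decompositions. Once this one ingredient is in place, the rest of the argument reduces to a straightforward bookkeeping combination of Theorem~\ref{theorem:approximating.actions}, Proposition~\ref{proposition:separable.groups}, and Lemma~\ref{lemma:wobbling}.
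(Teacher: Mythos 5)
Your overall route coincides with the paper's: the proof there is a one-line combination of Theorem~\ref{theorem:approximating.actions}, Proposition~\ref{proposition:separable.groups}, and the Baire-measurable version of Whyte's theorem for Borel actions on Polish spaces, which is precisely Theorem~1.2 of Marks--Unger~\cite{MarksUnger}. So the ingredient you single out as the ``main obstacle'' is not something to be developed here; it is an existing result that the paper simply cites, and your bookkeeping around it in the implications $(1)\Rightarrow(2)$, $(2)\Rightarrow(1)$ and $(3)\Rightarrow(1)$ is correct (modulo the harmless normalization $\sum_{n} 2^{-n} = 2$ in your averaged mean).

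There is, however, a genuine gap in your argument for $(1)\Rightarrow(3)$. You infer non-amenability of the action of $\Gamma(\bigoplus_{n}\alpha_{n})$ on $G\times\mathbb{N}$ from non-amenability of its restriction to the invariant fibre $G\times\{0\}$. This implication is false in general: an invariant mean on $\ell^{\infty}(X)$ need not assign positive mass to a given invariant subset $Y$, so the action on $X$ can be amenable even though the restriction to $Y$ is not (e.g.\ $F_{2}$ acting on the disjoint union of itself and a fixed point). Non-amenability passes \emph{down} to invariant subsets (a paradoxical decomposition of $X$ intersects to one of $Y$), not \emph{up} from them; indeed, this asymmetry is exactly why condition~(3) of Theorem~\ref{theorem:approximating.actions} is a priori weaker than condition~(2), as Remark~\ref{remark:paradoxical} emphasizes. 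The correct route is to use the negation of condition~(3) of Theorem~\ref{theorem:approximating.actions}: non-amenability of $G$ yields $U$ such that the action of $\Gamma_{U}(G)$ on $G\times\mathscr{N}_{G}(U)$ is non-amenable, hence by Tarski's alternative there are finitely many words $g_{1},\dots,g_{m},h_{1},\dots,h_{n}\in F_{G}$ witnessing a paradoxical decomposition \emph{uniformly} in $\alpha\in\mathscr{N}_{G}(U)$; applying these same words fibrewise over $G\times\mathbb{N}$ (equivalently, restricting the paradoxical decomposition of $G\times\mathscr{N}_{G}(U)$ along the equivariant map $(x,n)\mapsto(x,\alpha_{n})$) exhibits a $\Gamma(\bigoplus_{n}\alpha_{n})$-paradoxical decomposition of $G\times\mathbb{N}$, after which Marks--Unger applies on the Polish space $G\times\mathbb{N}$ as you intend.
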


\begin{proof} This follows from Theorem~\ref{theorem:approximating.actions}, Proposition~\ref{proposition:separable.groups}, and~\cite[Theorem~1.2]{MarksUnger}. \end{proof}

\section{Equivariant geometry of amenable topological groups}\label{section:equivariant.geometry}

In this section we give an application of Theorem~\ref{theorem:topological.folner} concerning coarse geometry of topological groups as studied extensively in~\cite{rosendal1,rosendal2}. In fact, we answer a question posed by Rosendal~\cite[Problem~40]{rosendal2} in the affirmative.

For a start, let us recall some terminology from~\cite{rosendal2}. Let $X$ and $Y$ be pseudo-metric spaces and let $\sigma \colon X \to Y$. The \emph{expansion modulus} of $\sigma$ is defined as $\theta_{\sigma} \colon [0,\infty] \to [0,\infty]$ with \begin{displaymath}
\theta_{\sigma}(t) \defeq \sup \{ d_{Y}(\sigma (x),\sigma (y)) \mid x,y \in X, \, d_{X}(x,y) \leq t \} \qquad (t \geq 0) .
\end{displaymath} Note that $\sigma \colon X \to Y$ is uniformly continuous if and only if $\lim_{t \to 0} \theta_{\sigma}(t) = 0$. We will say that $f \colon X \to Y$ is \emph{bornologous} if $\theta_{\sigma}(t) < \infty$ for all $t < \infty$. Furthermore, the \emph{compression modulus} of $\sigma$ is defined to be $\kappa_{\sigma} \colon [0,\infty] \to [0,\infty]$ with \begin{displaymath}
\kappa_{\sigma}(t) \defeq \inf \{ d_{Y}(\sigma (x),\sigma (y)) \mid x,y \in X, \, d_{X}(x,y) \geq t \} \qquad (t \geq 0) .
\end{displaymath} Moreover, we define the \emph{exact compression modulus} of $\sigma$ as $\tilde{\kappa}_{\sigma} \colon [0,\infty] \to [0,\infty]$ with \begin{displaymath}
\tilde{\kappa}_{\sigma}(t) \defeq \inf \{ d_{Y}(\sigma (x),\sigma (y)) \mid x,y \in X, \, d_{X}(x,y) = t \} \qquad (t \geq 0) .
\end{displaymath} Evidently, $\kappa_{\sigma}(t) = \inf_{s \geq t} \tilde{\kappa}_{\sigma}(s)$ for every $t \geq 0$, and \begin{displaymath}
\tilde{\kappa}_{\sigma}(d_{X}(x,y)) \leq d_{Y}(\sigma (x),\sigma (y)) \leq \theta_{\sigma}(d_{X}(x,y))	
\end{displaymath} for all $x,y \in X$. For a detailed discussion of the functions introduced in this paragraph the reader is referred to~\cite{rosendal2}.

We will also need some additional terminology concerning Banach spaces. As in~\cite{rosendal2}, we say that a Banach space $X$ is \emph{finitely representable} in a Banach space $Y$ if, for every finite-dimensional linear subspace $F \subseteq X$ and every $\epsilon > 0$, there is a linear embedding $T \colon F \to Y$ such that $\Vert T \Vert , \Vert T^{-1} \Vert < 1+\epsilon$. It is quite well known that a Banach space $X$ is finitely representable in a Banach space $Y$ if and only if $X$ embeds isometrically into some ultrapower of $Y$ (see for instance~\cite{stern1,stern2}).

What is more, if $E$ is a Banach space and $1 \leq p < \infty$, then we denote by $L^{p}(E)$ the Banach space of equivalence classes of Bochner measurable functions $f \colon [0,1] \to E$ with \begin{displaymath}
\Vert f \Vert_{L^{p}(E)} = \left( \int_{0}^{1} \Vert f(t) \Vert_{E}^{p} \, \mathrm{d}t \right)^{1/p} < \infty .
\end{displaymath}

For the last bit of terminology, let $\pi$ be a continuous isometric linear representation of a topological group $G$ on a Banach space $X$, that is, $\pi$ is a continuous homomorphism from $G$ into the topological group $\Iso (X)$ of all linear isometries of $X$ endowed with the strong operator topology. As usual, by a \emph{cocycle} for $\pi$ we mean a map $b\colon G \to X$ satisfying $b(xy) = \pi(x)b(y) + b(x)$ for all $x,y \in G$.

Our next theorem constitutes a generalization of Theorem~39 of~\cite{rosendal2} and provides a positive solution to Problem~40 of~\cite{rosendal2}. In turn, Theorem~\ref{theorem:rosendal} generalizes earlier results by Pestov for locally finite groups~\cite[Theorem~4.2]{Pestov08} and by Naor and Peres for finitely generated amenable groups~\cite[Theorem~9.1]{NaorPeres}.

\begin{thm}\label{theorem:rosendal} Let $G$ be an amenable Hausdorff topological group, $E$ be a Banach space. Let $1 \leq p < \infty$. There exists a continuous isometric linear representation $\pi$ of $G$ on a Banach space $X$, finitely representable in $L^{p}(E)$, such that the following holds: if $d$ is a continuous left-invariant pseudo-metric on $G$ and $\sigma \colon (G,d) \to E$ is a uniformly continuous and bornologous map with exact compression modulus $\tilde{\kappa} = \tilde{\kappa}_{\sigma}$ and expansion modulus $\theta = \theta_{\sigma}$, then $\pi$ admits a cocycle $b \colon G \to X$ such that \begin{displaymath}
	\forall x,y \in G \colon \quad \tilde{\kappa} (d(x,y)) \leq \Vert b(x) - b(y) \Vert_{X} \leq \theta (d(x,y)) .
\end{displaymath} \end{thm}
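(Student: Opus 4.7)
The plan is to carry out an ultraproduct/F\o lner construction in the spirit of Naor--Peres~\cite{NaorPeres}, Pestov~\cite{Pestov08}, and Rosendal~\cite[Thm.~39]{rosendal2}, with Theorem~\ref{theorem:topological.folner} supplying the approximate invariance that classically came from F\o lner sequences. Crucially, $(\pi,X)$ will be manufactured from $G$ alone, independently of the data $(d,\sigma)$, and $b$ will arise at the end as the ultralimit of normalized truncated coboundaries of $\sigma$.

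\smallskip

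\noindent\emph{Step 1 (finitary approximations).} Let $\mathscr{I}$ be the upward directed set of triples $\iota = (E,d',\epsilon)$ with $E \in \mathscr{F}(G)$, $d'$ a continuous right-invariant pseudo-metric on $G$, and $\epsilon > 0$. Applying the right-handed version of Theorem~\ref{theorem:topological.folner} (obtained by passing to $G^{\mathrm{op}}$), fix for each $\iota$ a finite non-empty $F_\iota \subseteq G$ and, for every $g \in E$, a subset $D_\iota^g \subseteq F_\iota$ of size at least $(1-\epsilon)|F_\iota|$ together with an injection $\phi_\iota^g \colon D_\iota^g \to F_\iota g$ satisfying $d'(\phi_\iota^g(x),x) < \epsilon$. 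Extend each $\phi_\iota^g$ arbitrarily to a bijection $F_\iota \to F_\iota g$ and turn it into a permutation $\tau_g^\iota$ of $F_\iota$ that approximates right multiplication by $g$ on $D_\iota^g$. Put $X_\iota \defeq \ell^p(F_\iota, E)$ and let $\pi_\iota(g) \in \Iso(X_\iota)$ be the coordinate permutation via $\tau_g^\iota$.

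\smallskip

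\noindent\emph{Step 2 (ultraproduct).} Fix a non-principal ultrafilter $\mathscr{U}$ on $\mathscr{I}$ refining its order filter, let $X \defeq (\prod_\iota X_\iota)_\mathscr{U}$ be the Banach ultraproduct, and set $\pi(g) \defeq [\pi_\iota(g)]_\mathscr{U}$. Since each $X_\iota$ is a finite-dimensional vector-valued $\ell^p$ space, it embeds isometrically in $L^p(E)$, so $X$ embeds isometrically in an ultrapower of $L^p(E)$ and is therefore finitely representable in $L^p(E)$. The F\o lner control on $D_\iota^g$ drives the group-law defect $\pi_\iota(gh) - \pi_\iota(g)\pi_\iota(h)$ to zero along $\mathscr{U}$, and — since every continuous right-invariant pseudo-metric occurs among the $d'$ used to index $\mathscr{I}$ — also the continuity defect $\pi_\iota(g_n) - \pi_\iota(g)$ for $g_n \to g$ in $G$. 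Hence $\pi$ is an honest continuous isometric representation of $G$ on $X$.

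\smallskip

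\noindent\emph{Step 3 (cocycle).} Given $\sigma \colon (G,d) \to E$, define
\[
	b_\iota(g)(x) \defeq |F_\iota|^{-1/p}\bigl(\sigma(xg) - \sigma(x)\bigr) \qquad (x \in F_\iota).
\]
Left-invariance of $d$ yields $d(xg,x) = d(g,e)$, hence the termwise pinch $\tilde\kappa(d(g,e)) \leq \|\sigma(xg) - \sigma(x)\|_E \leq \theta(d(g,e))$ integrates to $\tilde\kappa(d(g,e)) \leq \|b_\iota(g)\|_{X_\iota} \leq \theta(d(g,e))$. A direct computation shows that the cocycle defect $b_\iota(gh) - \pi_\iota(g) b_\iota(h) - b_\iota(g)$ is supported on the mismatched part of $F_\iota$ (of relative size $\leq \epsilon$), with coordinate entries governed by $d(\tau_g^\iota(x)h, xgh)$. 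Once this defect is driven to zero along $\mathscr{U}$, $b(g) \defeq [b_\iota(g)]_\mathscr{U}$ is a genuine $\pi$-cocycle inheriting the norm bounds; the two-point inequality then follows from $\|b(x)-b(y)\|_X = \|b(y^{-1}x)\|_X$ (cocycle identity plus isometry of $\pi$) and $d(x,y) = d(y^{-1}x, e)$ (left-invariance of $d$).

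\smallskip

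\noindent\emph{Main obstacle.} The delicate point is that the matching $\phi_\iota^g$ is controlled only by a right-invariant pseudo-metric $d'$, whereas the norm bounds on $\sigma$ involve the left-invariant $d$; for non-SIN groups the two uniformities genuinely differ, so $d(\tau_g^\iota(x)h, xgh)$ cannot be bounded from $d'$ alone by a naive estimate. The remedy — and the technical heart of the proof — is to enlarge $\mathscr{I}$ so that for each relevant finite test set of elements $(g,h)$ the pseudo-metric $d'$ additionally dominates the $d$-distances of all conjugates of a $d'$-small element that will appear in the defect, which is arranged by a diagonal/iteration argument using continuity of $d$. A secondary bookkeeping issue is the symmetry-based translation of Theorem~\ref{theorem:topological.folner} into its right-handed form used in Step~1.
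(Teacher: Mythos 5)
Your overall strategy (topological F\o lner sets $\to$ spaces $\ell^p(F_\iota,E)$ $\to$ ultraproduct, with $b$ obtained as an ultralimit of normalized coboundaries of $\sigma$) is the right family of ideas, and your finite-representability argument is correct. But there are two genuine gaps, both created by the decision to let $G$ act through \emph{permutations} $\tau^\iota_g$ of $F_\iota$ rather than through exact right translations on a function space. First, on the full ultraproduct $X=(\prod_\iota X_\iota)_{\mathscr{U}}$ the ultralimit $\pi(g)\defeq[\pi_\iota(g)]_{\mathscr{U}}$ is \emph{not} a homomorphism: the defect $\pi_\iota(gh)-\pi_\iota(g)\pi_\iota(h)$ is a difference of two coordinate permutations which disagree on a set of relative size $O(\epsilon)$, and its \emph{operator norm} on $\ell^p(F_\iota,E)$ stays bounded below by $2^{1/p}$ whenever that set is non-empty (test it on a unit vector supported on a single mismatched point); only its action on ``spread-out'' vectors of the form $|F_\iota|^{-1/p}f|_{F_\iota}$ with $f$ bounded and uniformly continuous tends to zero along $\mathscr{U}$. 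The same objection applies to your claim of SOT-continuity. So you are forced to cut down to the closed subspace generated by such vectors --- at which point the permutations do no work. Second, the obstacle you flag at the end is not bookkeeping but a genuine circularity: the cocycle defect involves $d\bigl(\tau^\iota_g(x)h,\,xgh\bigr)=d\bigl((xgh)^{-1}w(xgh),e\bigr)$ with $w=\tau^\iota_g(x)(xg)^{-1}$ small only in the \emph{right} uniformity, and the conjugating elements $xgh$ range over $F_\iota gh$, a set chosen \emph{after} the pseudo-metric $d'$ indexing $\iota$; for non-SIN groups no enlargement of $\mathscr{I}$ can arrange that a single right-invariant $d'$ controls the $d$-size of conjugates by an a priori unknown finite set.

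The paper's proof avoids both problems at once: it lets $G$ act \emph{exactly} by right translations, $(\pi(g)f)(x)=f(xg)$, on $\mathrm{LUC}_b(G,E)$, and uses the F\o lner sets only to build the invariant seminorm $\Vert f\Vert^p_{\mathscr{U},p}=\lim_{i\to\mathscr{U}}\frac{1}{|F_i|}\sum_{x\in F_i}\Vert f(x)\Vert_E^p$; invariance is exactly the statement $\lim_{i\to\mathscr{U}}\delta_{F_i}(f^\ast-f^\ast\circ\rho_{g^{-1}})=0$ for $f^\ast(x)=\Vert f(x)\Vert^p_E\in\mathrm{LUC}_b(G)$, supplied by Theorem~\ref{theorem:topological.folner} via Remarks~\ref{remark:topological.folner} and~\ref{remark:inversion}. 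Then $X$ is the completion of the quotient by the null space, it embeds isometrically into $\prod_{i\to\mathscr{U}}\ell^p(F_i,E)$ and is hence finitely representable in $L^p(E)$, the representation is a homomorphism and SOT-continuous essentially for free, and $b(x)\defeq\pi(x)\sigma-\sigma$ is an exact coboundary (bounded because $\sigma$ is bornologous and $d$ is left-invariant), so the cocycle identity and the two-sided norm estimate are immediate from the pointwise bounds $\tilde{\kappa}(d(x,y))\leq\Vert\sigma(zx)-\sigma(zy)\Vert_E\leq\theta(d(x,y))$. I recommend restructuring your argument along these lines; Steps~1--2 as written cannot be completed.
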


The proof of Theorem~\ref{theorem:rosendal} given below follows very closely the lines of Rosendal, more precisely the proof of Theorem~39 in~\cite{rosendal2}. Our only modification being the usage of Theorem~\ref{theorem:topological.folner}. However, we decided to include the full proof for the reader's convenience. What is more, we are going to revisit the argument subsequently for extremely amenable groups (Proposition~\ref{proposition:rosendal}).

\begin{remark}\label{remark:inversion} Let $G$ be a topological group. For the proof of Theorem~\ref{theorem:rosendal}, we are going to equip~$G$ with its \emph{left uniformity}, i.e., \begin{displaymath}
	\mathscr{E}_{\ell}(G) \defeq \{ E \subseteq G \times G \mid \exists U \in \mathscr{U}_{e}(G) \, \forall x,y \in G \colon \, x^{-1}y \in U \Longrightarrow (x,y) \in E \} ,
	\end{displaymath} and we will denote by $\mathrm{LUC}_{b}(G)$ the set of all bounded left-uniformly continuous real-valued functions on $G$. Of course, the topology induced by the left uniformity is again just the original topology of $G$. Moreover, $\iota \colon (G,\mathscr{E}_{r}(G)) \to (G,\mathscr{E}_{\ell}(G)), \ x \mapsto x^{-1}$ is an isomorphism of uniform spaces and ${\iota} \circ {\lambda_{g}} = {\rho_{g}} \circ {\iota}$ for all $g \in G$. Hence, any statement about the right uniformity and right-invariant metrics on~$G$ can be translated into an equivalent statement about the left uniformity and left-invariant metrics in straight-forward, translation-compatible manner. \end{remark}

\begin{proof}[Proof of Theorem~\ref{theorem:rosendal}] Since $G$ is amenable, Theorem~\ref{theorem:topological.folner} (along with Remarks~\ref{remark:topological.folner} and~\ref{remark:inversion}) asserts the existence of a family $(F_{i})_{i \in I}$ of finite non-empty subsets of $G$ and a non-principal ultrafilter $\mathscr{U}$ on $I$ with $\lim_{i \to \mathscr{U}} \delta_{F_{i}}(f-f \circ \rho_{g}) = 0$ for all $f \in \mathrm{LUC}_{b}(G)$ and $g \in G$. Let $1 \leq p < \infty$. For each $i \in I$, denote by $\ell^{p}(F_{i},E)$ the Banach space obtained by endowing the vector space $E^{F_{i}}$ with the norm \begin{displaymath}
	\Vert f \Vert_{\ell^{p}(F_{i},E)} \defeq \left( \sum_{x \in F_{i}} \Vert f(x) \Vert^{p}_{E} \right)^{1/p} \qquad \left(f \in E^{F_{i}}\right) .
\end{displaymath} Let $\prod_{i \to \mathscr{U}} \ell^{p}(F_{i},E)$ denote the corresponding ultraproduct. More precisely, equipping \begin{displaymath}
	\left. W \defeq \left\{ (f_{i})_{i \in I} \in \prod_{i \in I} \ell^{p}(F_{i},E) \ \right| \, \sup_{i \in I} \Vert f_{i} \Vert_{\ell^{p}(F_{i},E)} < \infty \right\}
\end{displaymath} with the semi-norm given by \begin{displaymath}
	\Vert (f_{i})_{i \in I} \Vert_{\mathscr{U}} \defeq \lim_{i \to \mathscr{U}} \Vert f_{i} \Vert_{\ell^{p}(F_{i},E)} \qquad ((f_{i})_{i \in I} \in W) 
\end{displaymath} and setting $N \defeq \{ (f_{i})_{i \in I} \in W \mid \Vert (f_{i})_{i \in I} \Vert_{\mathscr{U}} = 0 \}$, we define $\prod_{i \to \mathscr{U}} \ell^{p}(F_{i},E)$ to be the quotient space $W/N$. Let $\mathrm{LUC}_{b}(G,E)$ be the vector space of all bounded left-uniformly continuous maps from $G$ to $E$. Consider the linear operator $\Theta \colon \mathrm{LUC}_{b}(G,E) \to \prod_{i \to \mathscr{U}} \ell^{p}(F_{i},E)$ given by \begin{displaymath}
	\Theta (f) \defeq \left( \vert F_{i} \vert^{-1/p} \cdot f\vert_{F_{i}} \right)_{i \in I} + N \qquad (f \in \mathrm{LUC}_{b}(G,E)) .
\end{displaymath} Now, $\Vert f \Vert_{\mathscr{U},p} \defeq \Vert \Theta (f) \Vert_{\mathscr{U}}$ $(f \in \mathrm{LUC}_{b}(G,E))$ defines a semi-norm on $\mathrm{LUC}_{b}(G,E)$. Moreover, \begin{align*}
	\Vert f \Vert^{p}_{\mathscr{U},p} &= \left\Vert \left( \vert F_{i} \vert^{-1/p} \cdot f\vert_{F_{i}} \right)_{i \in I} \right\Vert^{p}_{\mathscr{U}} = \left( \lim_{i \to \mathscr{U}} \left\Vert \vert F_{i} \vert^{-1/p} \cdot f\vert_{F_{i}} \right\Vert_{\ell^{p}(F_{i},E)} \right)^{p} \\
	&= \lim_{i \to \mathscr{U}} \sum_{x \in F_{i}} \left\Vert \vert F_{i} \vert^{-1/p} f(x) \right\Vert_{E}^{p} = \lim_{i \to \mathscr{U}} \frac{1}{\vert F_{i} \vert} \sum_{x \in F_{i}} \Vert f(x) \Vert_{E}^{p} 
\end{align*} for every $f \in \mathrm{LUC}_{b}(G,E)$. We claim that $\Vert \cdot \Vert_{\mathscr{U},p}$ is invariant under the linear representation $\pi$ of $G$ on $\mathrm{LUC}_{b}(G,E)$ given by \begin{displaymath}
	\pi (g)f \defeq f \circ \rho_{g^{-1}} \qquad (g \in G, \, f \in \mathrm{LUC}_{b}(G,E)) .
\end{displaymath} Indeed, if $f \in \mathrm{LUC}_{b}(G,E)$, then the function $f^{\ast} \colon G \to \mathbb{R}, \, x \mapsto \Vert f(x) \Vert_{E}^{p}$ is bounded and left-uniformly continuous, and hence \begin{align*}
	\Vert f \Vert_{\mathscr{U},p}^{p} - \Vert \pi (g)f \Vert_{\mathscr{U},p}^{p} &= \lim_{i \to \mathscr{U}} \frac{1}{\vert F_{i} \vert} \sum_{x \in F_{i}} \Vert f(x) \Vert_{E}^{p} - \lim_{i \to \mathscr{U}} \frac{1}{\vert F_{i} \vert} \sum_{x \in F_{i}} \Vert f(xg) \Vert_{E}^{p} \\
	&= \lim_{i \to \mathscr{U}} \delta_{F_{i}}(f^{\ast} - f^{\ast} \circ \rho_{g^{-1}}) = 0
\end{align*} for all $g \in G$. Consider the subspace $M \defeq \{ f \in \mathrm{LUC}_{b}(G,E) \mid \Vert f \Vert_{\mathscr{U},p} = 0 \}$ and denote by~$X$ the completion of $\mathrm{LUC}_{b}(G,E)/M$ with respect to $\Vert \cdot \Vert_{\mathscr{U},p}$. Since $\pi$ is an isometric linear representation of $G$ on $\mathrm{LUC}_{b}(G,E)$, it induces an isometric linear representation of $G$ on $X$, which we continue denoting by $\pi$.
	
We are going to prove that $\pi \colon G \to \Iso (X)$ is continuous with respect to the strong operator topology on $\Iso (X)$, which means that $G \to X, \, g \mapsto \pi (g)x$ is continuous for every $x \in X$. Since $\pi$ is an isometric representation and the quotient $\mathrm{LUC}_{b}(G,E)/M$ is dense in $X$, it suffices to show that $G \to \mathrm{LUC}_{b}(G,E), \, g \mapsto \pi (g)f$ is continuous for every $f \in \mathrm{LUC}_{b}(G,E)$. To see this, let $f \in \mathrm{LUC}_{b}(G)$ and $\epsilon > 0$. Due to $f$ being left-uniformly continuous, there exists $U \in \mathscr{U}_{e}(G)$ such that $\Vert f(g) - f(h) \Vert_{E} \leq \epsilon$ for all $g,h \in G$ with $g^{-1}h \in U$. Hence, \begin{displaymath}
	\Vert \pi (g)f - \pi (h)f \Vert_{\mathscr{U},p}^{p} = \lim_{i \to \mathscr{U}} \frac{1}{\vert F_{i} \vert} \sum_{x \in F_{i}} \Vert f(xg)-f(xh) \Vert_{E}^{p} \leq \epsilon^{p}
\end{displaymath} and therefore $\Vert \pi (g)f - \pi (h)f \Vert_{\mathscr{U},p} \leq \epsilon$ for all $g,h \in G$ with $g^{-1}h \in U$. This proves that $\pi$ is continuous with respect to the strong operator topology.

We argue that $X$ is finitely representable in $L^{p}(E)$. Since $\ell^{p}(F_{i},E)$ is isometrically isomorphic to a linear subspace of $L^{p}(E)$ for each $i \in I$, it follows that $\prod_{i \to \mathscr{U}} \ell^{p}(F_{i},E)$ is finitely representable in $L^{p}(E)$. By construction of norm, $X$ embeds isometrically into $\prod_{i \to \mathscr{U}} \ell^{p}(F_{i},E)$ and is therefore finitely representable in~$L^{p}(E)$.
	
We now claim that the representation $\pi$ has the additional property stated in the theorem. To prove this, let $d$, $\sigma$, $\tilde{\kappa}$, $\theta$ be as in the theorem. Define $b \colon G \to \mathrm{LUC}_{b}(G,E)$ by \begin{displaymath}
	b(x) \defeq \pi (x) \sigma - \sigma \qquad (x \in G) .
\end{displaymath} Note that $b$ is well defined: if $x \in G$, then clearly $b(x)$ is left-uniformly continuous as $\sigma$ is, and $b(x)$ is bounded as \begin{displaymath}
	\sup_{y \in G} \Vert b(x)(y) \Vert_{E} = \sup_{y \in G} \Vert \sigma (yx) - \sigma(y) \Vert_{E} \leq \sup_{y \in G} \theta (d(yx,y)) = \theta (d(x,e)) < \infty
\end{displaymath} due to $d$ being left-invariant and $\sigma$ being bornologous. Again, in terms of notation we will not distinguish between $b$ and the associated map from $G$ to $X$. It is easy to see that $b$ is a cocycle for $\pi$, that is, $b(xy) = \pi (x)b(y) + b(x)$ for all $x,y \in G$. For all $x,y,z \in G$, the left-invariance of $d$ yields that \begin{align*}
	\tilde{\kappa} (d(x,y)) &= \tilde{\kappa} (d(zx,zy)) \leq \Vert \sigma (zx) - \sigma (zy) \Vert_{E} \leq \theta (d(zx,zy)) = \theta (d(x,y)) ,
\end{align*} and since $\sigma (zx) - \sigma (zy) = (\pi(x)\sigma)(z) - (\pi(y)\sigma)(z)$, we arrive at \begin{displaymath}
	\tilde{\kappa} (d(x,y)) \leq \Vert (\pi(x)\sigma)(z) - (\pi(y)\sigma)(z) \Vert_{E} \leq \theta (d(x,y)) .
\end{displaymath} We conclude that \begin{displaymath}
	\tilde{\kappa} (d(x,y))^{p} \leq \frac{1}{\vert F_{i} \vert} \sum_{z \in F_{i}} \Vert (\pi(x)\sigma)(z) - (\pi(y)\sigma)(z) \Vert_{E}^{p} \leq \theta (d(x,y))^{p}
\end{displaymath} for all $x,y \in G$ and $i \in I$. By the expression for $\Vert \cdot \Vert^{p}_{\mathscr{U},p}$, it follows that \begin{displaymath}
	\tilde{\kappa} (d(x,y)) \leq \Vert \pi(x)\sigma - \pi(y)\sigma \Vert_{\mathscr{U},p} \leq \theta (d(x,y)) ,
\end{displaymath} and since $b(x) - b(y) = \pi(x)\sigma - \pi(y)\sigma$, this just means that \begin{displaymath}
	\tilde{\kappa} (d(x,y)) \leq \Vert b(x) - b(y) \Vert_{\mathscr{U},p} \leq \theta (d(x,y)) 
\end{displaymath} for all $x,y \in G$. This completes the proof. \end{proof}

The proof of Theorem~\ref{theorem:rosendal} given above even allows for slight improvement concerning extremely amenable groups, as our next result reveals.

\begin{prop}\label{proposition:rosendal} Let $G$ be an extremely amenable topological group and let $E$ be a Banach space. There exists a continuous isometric linear representation $\pi$ of $G$ on a Banach space~$X$, finitely representable in $E$, such that the following holds: if $d$ is a continuous left-invariant pseudo-metric on $G$ and $\sigma \colon (G,d) \to E$ is a uniformly continuous and bornologous map with exact compression modulus $\tilde{\kappa} = \tilde{\kappa}_{\sigma}$ and expansion modulus $\theta = \theta_{\sigma}$, then $\pi$ admits a cocycle $b \colon G \to X$ such that \begin{displaymath}
	\forall x,y \in G \colon \quad \tilde{\kappa} (d(x,y)) \leq \Vert b(x) - b(y) \Vert_{X} \leq \theta (d(x,y)) .
	\end{displaymath} \end{prop}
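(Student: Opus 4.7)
The plan is to reproduce the proof of Theorem~\ref{theorem:rosendal} \emph{verbatim}, but with the F\o lner-averaging semi-norm $\Vert \cdot \Vert_{\mathscr{U},p}$ replaced by a single-point ultrafilter limit supplied by extreme amenability. This swaps the target ultraproduct $\prod_{i \to \mathscr{U}} \ell^{p}(F_{i},E)$ for an ultrapower $E^{\mathscr{U}}$, which is automatically finitely representable in $E$ itself rather than in $L^{p}(E)$, yielding the strengthened conclusion.

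The first step is to obtain a right-translation-invariant multiplicative mean on $\mathrm{LUC}_{b}(G)$. By extreme amenability of $G$, the standard Samuel compactification argument produces a left-invariant multiplicative mean on $\mathrm{RUC}_{b}(G)$; transferring it through the inversion isomorphism $\iota \colon (G,\mathscr{E}_{r}(G)) \to (G,\mathscr{E}_{\ell}(G))$ from Remark~\ref{remark:inversion} yields a right-invariant multiplicative mean on $\mathrm{LUC}_{b}(G)$, which can be represented by an ultrafilter $\mathscr{U}$ on $G$ satisfying
\begin{displaymath}
\forall h \in \mathrm{LUC}_{b}(G) \, \forall g \in G \colon \quad \lim_{x \to \mathscr{U}} h(x) = \lim_{x \to \mathscr{U}} h(xg) .
\end{displaymath}

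Next, I would define a semi-norm on $\mathrm{LUC}_{b}(G,E)$ by $\Vert f \Vert_{\mathscr{U}} := \lim_{x \to \mathscr{U}} \Vert f(x) \Vert_{E}$. Since $x \mapsto \Vert f(x) \Vert_{E}$ lies in $\mathrm{LUC}_{b}(G)$, the invariance property of $\mathscr{U}$ immediately gives that $\Vert \cdot \Vert_{\mathscr{U}}$ is invariant under the representation $\pi (g) f := f \circ \rho_{g^{-1}}$ of $G$ on $\mathrm{LUC}_{b}(G,E)$. Quotienting by the kernel $N := \{ f \mid \Vert f \Vert_{\mathscr{U}} = 0 \}$ and completing produces the Banach space $X$ carrying the induced isometric linear $G$-representation, again denoted $\pi$. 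Strong-operator continuity of $\pi$ follows from $\Vert \pi(g)f - \pi(h)f \Vert_{\mathscr{U}} \leq \sup_{x \in G} \Vert f(xg) - f(xh) \Vert_{E}$ together with the left-uniform continuity of $f$, exactly as in the proof of Theorem~\ref{theorem:rosendal}. The assignment $f \mapsto (f(x))_{x \in G}$ descends to a linear isometry from $\mathrm{LUC}_{b}(G,E)/N$ into the ultrapower $E^{\mathscr{U}}$, so $X$ embeds isometrically into $E^{\mathscr{U}}$ and is therefore finitely representable in $E$.

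Finally, the cocycle $b(x) := \pi(x)\sigma - \sigma$ and the required moduli bounds are obtained by the same calculation as in Theorem~\ref{theorem:rosendal}: for every $z \in G$, left-invariance of $d$ gives
\begin{displaymath}
\tilde{\kappa}(d(x,y)) \leq \Vert \sigma(zx) - \sigma(zy) \Vert_{E} = \Vert (\pi(x)\sigma)(z) - (\pi(y)\sigma)(z) \Vert_{E} \leq \theta(d(x,y)) ,
\end{displaymath}
and passing to the $\mathscr{U}$-limit in $z$ preserves both inequalities. The only genuine subtlety is the left/right bookkeeping required to align the invariance direction of $\mathscr{U}$ with the $\rho$-action carried by $\pi$; no new analytic difficulty arises because ultrafilter limits are automatically linear and positively homogeneous, so the rescaling factor $\vert F_{i} \vert^{-1/p}$ and the interchange of summation and limit that cluttered the F\o lner proof both simply disappear.
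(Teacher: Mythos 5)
Your proposal is correct and is essentially the paper's own argument: the paper likewise invokes the fixed point in the Samuel compactification to obtain an ultrafilter limit with $\lim_{i\to\mathscr{U}} f(x_i)-f(x_i g)=0$ for $f\in\mathrm{LUC}_b(G)$, and then reruns the proof of Theorem~\ref{theorem:rosendal} with the singleton F\o lner sets $F_i=\{x_i\}$, so that each $\ell^p(F_i,E)\cong E$ and $X$ embeds into an ultrapower of $E$. Your phrasing via a right-invariant multiplicative mean on $\mathrm{LUC}_b(G)$ represented by an ultrafilter on the set $G$ is just a reindexing of the same construction, and your left/right bookkeeping matches Remark~\ref{remark:inversion}.
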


\begin{proof} It is well known that a topological group $G$ is extremely amenable if and only if the continuous action of $G$ on its Samuel compactification admits a fixed point (for more details see~\cite{pestov}). Since $G$ is extremely amenable, there thus exists a family $(x_{i})_{i \in I} \in G^{I}$ along with an ultrafilter $\mathscr{U}$ on $I$ such that $\lim_{i \to \mathscr{U}} f(x_{i}) - f(x_{i}g) = 0$ for all $f \in \mathrm{LUC}_{b}(G)$ and $g \in G$. Reviewing the proof of Theorem~\ref{theorem:rosendal} for $F_{i} \defeq \{ x_{i} \}$ $(i \in I)$, we now observe that $\ell^{1}(F_{i},E) \cong E$ for every $i \in I$, whence the constructed Banach space $X$ embeds isometrically into the ultrapower of $E$ with respect to $\mathscr{U}$. That is, $X$ is finitely representable in $E$. \end{proof}

As outlined in~\cite{rosendal2}, Theorem~\ref{theorem:rosendal} has a number of interesting applications concerning the coarse geometry of topological groups. In particular, it implies that Theorem~7, Corollary~41, Corollary~42, and Corollary~44 of~\cite{rosendal2} are still valid if the assumption of F\o lner amenability is replaced by amenability.

\begin{definition}[\cite{rosendal2}]\label{definition:rosendal} A Polish group~$G$ is called \emph{F\o lner amenable} if \begin{enumerate}
	\item[$(1)$] there exist an amenable, second countable, locally compact group $H$ and a continuous homomorphism $f \colon H \to G$ such that $f(H)$ is dense in~$G$, or
	\item[$(2)$] $G$ admits a chain of compact subgroups whose union is dense in~$G$.
\end{enumerate}  \end{definition}

It is not easy to provide examples of amenable Polish groups which are not F\o lner amenable in the sense of Rosendal -- however with the help of Ian Agol we managed to provide an example as follows. Let $m$ be fixed and let $G(m)$ be the inverse limit of the free $m$-generator $n$-step nilpotent groups $G(m,n)$. It is well-known that $G(m,n)$ is torsionfree for all $m, n \in \mathbb N$. Then $G(m)$ is an amenable pro-discrete Polish group. Indeed, the inverse limit of any directed family of amenable topological groups, whose limit projections have dense images, is again amenable~\cite{Pestov12}, and in particular the inverse limit of any directed family of amenable discrete groups is amenable as a topological group. 

For a fixed prime $p$, the groups $G(m)$ surjects onto $K(m,p)$, the free $m$-generator pro-$p$ group, since all finite $p$-groups are nilpotent. Now, the congruence kernel $$L(p):= \ker({\rm SL}(2,{\mathbb Z}_p) \to {\rm SL}(2,{\mathbb Z}/p{\mathbb Z}))$$ is a finitely generated pro-$p$-group and hence $K(m)$ surjects onto it for some $m$. Hence, any dense subgroup of $G(m)$ for $m$ large enough will give rise to a dense subgroup of $L(p)$. Since $L(p)$ is non-solvable, this image is non-solvable as well and thus contains a free subgroup by the Tits alternative. We get that any dense subgroup of $G(m)$ must have  a free subgroup and hence be non-amenable. An additional argument shows that $m=2$ is enough for the general strategy to work. On the other side, we claim that if $G(m)$ were F\o lner amenable, then it must have a dense subgroup which is amenable as a discrete group. Indeed, since $G(m,n)$ is torsionfree, the group $G(m)$ does not have any non-trivial compact subgroups. Thus, condition (2) cannot be satisfied and in condition (1) any continuous homomorphism from an amenable locally compact group must factor through a discrete quotient. This finishes the proof that $G(m)$ is not F\o lner amenable.

 If $G$ is an amenable Polish group not satisfying condition~(1) of Definition~\ref{definition:rosendal}, then $G \times \mathbb{Z}$ is an amenable Polish group not being F\o lner amenable: by our hypothesis on $G$, it follows that $G \times \mathbb{Z}$ does not satisfy condition~(1) of Definition~\ref{definition:rosendal}, and since $\mathbb{Z}$ is not locally finite, $G \times \mathbb{Z}$ cannot satisfy condition~(2) of Definition~\ref{definition:rosendal} either. Therefore, in order to produce more examples of amenable Polish groups which are not F\o lner amenable, it would suffice to find an amenable Polish group not satisfying condition~(1) of Definition~\ref{definition:rosendal}. For $\Aut (\mathbb{Q},{<})$, equipped with the topology of point-wise convergence, $U(\ell^{2}\mathbb{N})$  or the Fredholm unitary group $U_{C}(\ell^{2}\mathbb{N})$, endowed with the uniform operator topology, this problem can be reduced to the question of existence of a countable dense subgroup which is amenable as a discrete group. For $\Aut (\mathbb{Q},{<})$ this seems to be an interesting open problem -- likely equivalent to the famous question if Thompson's group $F$ is amenable. However, this question is also interesting and just as natural for other polish groups, such as the unitary group $U(\ell^{2}\mathbb{N})$ with the strong operator topology, or the Fredholm unitary group $U_{C}(\ell^{2}\mathbb{N})$. Let us note in this respect that there is no \emph{local} obstruction to amenabiliy of a dense subgroup in $U(\ell^{2}\mathbb{N})$; what we mean by this is that it is known that for any $k$, the set of $k$-tuples of unitaries in $U(\ell^{2}\mathbb{N})$ which generate an amenable group (in fact a finite group) is dense in the product topology. This follows from work of Lubotzky-Shalom~\cite{LubotzkyShalom}. On the other side, note that the group $U(n)$ cannot have a dense subgroup which is amenable as a discrete group -- this again is a basic consequence of the Tits alternative.
 
 \section*{Acknowledgments}

The research leading to these results has received funding from the European Research Council under the European Union's Seventh Framework Programme (FP7/2007-2013), ERC grant agreement n$^\circ$ 277728.
The first author acknowledges funding of the German Research Foundation (reference no.~SCHN 1431/3-1) as well as the Excellence Initiative by the German Federal and State Governments. We thank Ian Agol for allowing us to include the example of an amenable topological group which is not F\o lner amenable in the sense of Rosendal.



\begin{thebibliography}{CSGH99}
	
\bibitem[BB11]{babenko}
Ivan~K. Babenko and Semeon~A. Bogaty{\u\i}, \emph{Amenability of the substitution group of formal power series}, Izv. Ross. Akad. Nauk Ser. Mat. \textbf{75} (2011), no.~2, pp.~19--34.
	
\bibitem[BMP14]{BarrosoMbomboPestov}
Cleon~S. Barroso, Brice~R. Mbombo, and Vladimir~G. Pestov, \emph{On topological groups with an approximate fixed point property}, October 2014, arXiv: \url{1410.8370 [math.GR]}.

\bibitem[CT16]{CarderiThom}
Alessandro Carderi and Andreas~B. Thom, \emph{An exotic group as limit of finite special linear groups}, March 2016, arXiv: \url{1603.00691 [math.GR]}.

\bibitem[CSGH99]{ParadoxicalDecompositions}
Tullio Ceccherini-Silberstein, Rostislav~I. Grigorchuk, and Pierre de~la~Harpe, \emph{Amenability and paradoxical decompositions for pseudogroups and discrete metric spaces}, Proc. Steklov Inst. Math. \textbf{224} (1999), pp.~57--97.

\bibitem[Day57]{day57}
Mahlon~M. Day, \emph{Amenable semigroups}, Illinois J. Math. \textbf{1} (1957), pp.~509--544.

\bibitem[F{\o}l55]{folner}
Erling F{\o}lner, \emph{On groups with full {B}anach mean value}, Math. Scand. \textbf{3} (1955), pp.~243--254.

\bibitem[GL09]{GaboriauLyons}
Damien Gaboriau and Russell Lyons, \emph{A measurable-group-theoretic solution to von {N}eumann's problem}, Invent. Math. \textbf{177} (2009), no.~3, pp.~533--540.

\bibitem[GM83]{GromovMilman}
Michail Gromov and Vitali~D. Milman, \emph{A topological application of the isoperimetric inequality}, Amer. J. Math. \textbf{105} (1983), no.~4, pp.~843--854.

\bibitem[Hal35]{Hall35}
Philip Hall, \emph{{On representatives of subsets}}, Journal of the London Mathematical Society \textbf{10} (1935), pp.~26--30.

\bibitem[LS04]{LubotzkyShalom}
Alexander Lubotzky and Yehuda Shalom, \emph{Finite representations in the unitary dual and {R}amanujan groups}, In: \emph{Discrete geometric analysis}, Contemp. Math. \textbf{347}, Amer. Math. Soc., Providence, RI, 2004, pp.~173--189.

\bibitem[MU16]{MarksUnger}
Andrew Marks and Spencer Unger, \emph{Baire measurable paradoxical decompositions via matchings}, Adv. Math. \textbf{289} (2016), pp.~397--410.

\bibitem[Nam64]{namioka}
Isaac Namioka, \emph{F{\o}lner's conditions for amenable semi-groups}, Math. Scand. \textbf{15} (1964), pp.~18--28.

\bibitem[NP11]{NaorPeres}
Assaf Naor and Yuval Peres, \emph{{$L_p$} compression, traveling salesmen, and stable walks}, Duke Math. J. \textbf{157} (2011), no.~1, pp.~53--108.

\bibitem[NPS15]{NeufangPachlPekka}
Matthias Neufang, Jan Pachl, and Pekka Salmi, \emph{Uniform equicontinuity, multiplier topology and continuity of convolution}, Arch. Math. (Basel) \textbf{104} (2015), no.~4, pp.~367--376.

\bibitem[Neu29]{vonNeumann}
John von Neumann, \emph{\"{U}ber die analytischen {E}igenschaften von {G}ruppen linearer {T}ransformationen und ihrer {D}arstellungen}, Math. Z. \textbf{30} (1929), no.~1, pp.~3--42.

\bibitem[Ol'80]{olshanskii}
Alexander~Ju. Ol'{\v{s}}anski{\u\i}, \emph{On the question of the existence of an invariant mean on a group}, Uspekhi Mat. Nauk \textbf{35} (1980), no.~4(214), pp.~199--200.

\bibitem[Ore55]{Ore}
Oystein Ore, \emph{Graphs and matching theorems}, Duke Math. J. \textbf{22} (1955), pp.~625--639.

\bibitem[Pac13]{PachlBook}
Jan Pachl, \emph{Uniform spaces and measures}, Fields Institute Monographs \textbf{30}, Springer, New York; Fields Institute for Research in Mathematical Sciences, Toronto, 2013.

\bibitem[PS15]{PachlSteprans}
Jan Pachl and Juris Stepr{\= a}ns, \emph{Continuity of convolution on SIN groups}, July 2015, arXiv: \url{1507.07506 [math.FA]}.

\bibitem[Pes98]{pestov}
Vladimir~G. Pestov, \emph{On free actions, minimal flows, and a problem by {E}llis}, Trans. Amer. Math. Soc. \textbf{350} (1998), no.~10, pp.~4149--4165.

\bibitem[Pes06]{pestovbook}
Vladimir~G. Pestov, \emph{Dynamics of {I}nfinite-{D}imensional {G}roups: {T}he {R}amsey-{D}voretzky-{M}ilman {P}henomenon}, University Lecture Series \textbf{40},
American Mathematical Society, Providence, RI, 2006.

\bibitem[Pes08]{Pestov08}
Vladimir~G. Pestov, \emph{A theorem of {H}rushovski-{S}olecki-{V}ershik applied to uniform and coarse embeddings of the {U}rysohn metric space}, Topology Appl. \textbf{155} (2008), no.~14, pp.~1561--1575.

\bibitem[Pes12]{Pestov12}
Vladimir~G. Pestov, \emph{Review of~\cite{babenko}}, MathSciNet, MR 2830241 (2012e:43002), URL: \url{http://www.ams.org/mathscinet-getitem?mr=2830241}.

\bibitem[Ric67]{rickert}
Neil~W. Rickert, \emph{Amenable groups and groups with the fixed point property}, Trans. Amer. Math. Soc. \textbf{127} (1967), pp.~221--232.

\bibitem[Ros73]{rosenblatt}
Joseph~M. Rosenblatt, \emph{A generalization of {F}\o lner's condition}, Math. Scand. \textbf{33} (1973), no.~3, pp.~153--170.

\bibitem[Ros15a]{rosendal1}
Christian Rosendal, \emph{Coarse geometry of topological groups}, Preprint 2015, URL: \url{http://homepages.math.uic.edu/~rosendal/PapersWebsite/CoarseGeometry20}.

\bibitem[Ros15b]{rosendal2}
Christian Rosendal, \emph{Equivariant geometry of {B}anach spaces and topological groups}, Preprint 2015, URL:~\url{http://homepages.math.uic.edu/~rosendal/PapersWebsite/Equivariant-Geometry28}.

\bibitem[ST15]{SchneiderThom}
Friedrich~M. Schneider and Andreas~B. Thom, \emph{Topological matchings and amenability}, February 2015, arXiv: \url{1502.02293 [math.GR]}.

\bibitem[Ste76]{stern1}
Jacques Stern, \emph{Some applications of model theory in {B}anach space theory}, Ann. Math. Logic \textbf{9} (1976), no.~1--2, pp.~49--121.

\bibitem[Ste78]{stern2}
Jacques Stern, \emph{Ultrapowers and local properties of {B}anach spaces}, Trans. Amer. Math. Soc. \textbf{240} (1978), pp.~231--252.

\bibitem[Wei37]{weil}
Andr\'e Weil, \emph{Sur les espaces \`a structure uniforme et sur la topologie g\'en\'erale}, Act. Sci. Ind. \textbf{551} (1937), Paris.

\bibitem[Why99]{whyte}
Kevin Whyte, \emph{Amenability, bi-{L}ipschitz equivalence, and the von {N}eumann conjecture}, Duke Math. J. \textbf{99} (1999), no.~1, pp.~93--112.
	
\end{thebibliography}
\end{document}